\newtheorem{remark}{Remark}
\newtheorem{Definition}{Definition}
\title{The role of numerical  boundary  procedures in the stability of  perfectly matched layers}
\author{Kenneth Duru\thanks{Department of Geophysics, Stanford University, Stanford, CA ({\tt kduru@stanford.edu})}}
\begin{document}

\maketitle

\begin{abstract} 
In this paper we address the temporal energy growth associated with numerical approximations of the perfectly matched layer (PML) for Maxwell's equations in first order form.  In the literature, several studies  have shown that a numerical method which is stable in the absence of the PML can become unstable when the PML is introduced. We demonstrate in this paper that this instability  can be  directly related to  numerical treatment of boundary conditions in the PML.  First, at the continuous level, we establish the  stability of the constant coefficient initial boundary value problem  for the PML. To enable  the construction of  stable numerical  boundary procedures, we derive energy estimates for the variable coefficient PML.  Second,  we develop a high order accurate and stable numerical approximation for the PML  using summation--by--parts finite difference operators to approximate spatial derivatives and weak enforcement of boundary conditions using penalties. By constructing analogous discrete energy estimates we show  discrete stability and convergence of the numerical method. Numerical experiments verify the  theoretical results. 
\end{abstract}
\begin{keywords} 
Maxwell's equations, guided waves,  boundary waves, normal mode analysis, perfectly matched layers,  energy method, well--posedness, stability, high order accuracy,  efficiency,  finite difference, summation--by--parts, penalty terms.
\end{keywords}
\begin{AMS}
   35L15    35L05  35L10 35L20 35Q61 35Q74
\end{AMS}
\pagestyle{myheadings}
\thispagestyle{plain}
\markboth{KENNETH DURU}{BOUNDARY PROCEDURES AND DISCRETE STABILITY OF THE PML}
\section{Introduction}
Numerical approximations of the perfectly matched layer (PML) \cite{Abarbanel1998, Berenger1994} for Maxwell's equations in first order form often exhibit  a late time linear or exponential energy growth, see for example \cite{Abarbanel2002, Abarbanel2009,  Becache2002, Becache2004}.  This unwanted growth can spread into the computational domain and ruin the accuracy of a numerical simulation.   Growth is often seen long after a wave has exited the domain or when a wave has been trapped in the domain for a longtime. The later  situation can occur, for example, in the simulation of an electromagnetic waveguide. This has generated substantial interests.  One explanation for this  behavior of the PML was offered in \cite{Abarbanel1997}. It is  shown that the split--field  PML is weakly hyperbolic, and   numerical approximations  can lead to  perturbations that excite explosive modes in the PML.   Because of the result in \cite{Abarbanel1997}, several unsplit  (strongly hyperbolic) PMLs for Maxwell's equations  were developed \cite{Abarbanel1998}. However,  series of numerical experiments presented in the literature, see for instance    \cite{Abarbanel2002, Abarbanel2009}, using  standard finite difference schemes and \cite{Abenius2005}  for finite element methods,  suggest that these  unsplit PMLs also suffer from the same late time exponential energy growth as the split field PML. It was later shown in \cite{Abarbanel2002} that the source of the late time exponential growth was   characterized by the multiplicity of eigenvalues and the degeneracy of eigenvectors of the lower order undifferentiated terms. In \cite{Becache2004, Becache2002}, however,  it was shown, for a given computational setup, that the growth generated in the PML, if any,  may not propagate back into the computational domain. It is also important to note that a partial  remedy for this problem in odd (1 and 3)  spatial   dimensions   has been proposed in \cite{Qasimov2008}, using a lacunae based time integration. Nonetheless, a recent  analysis of a PML for Maxwell's equations in \cite{DuKrApnum2013} shows that  the corresponding initial value problem (IVP) is  asymptotically stable.  In this study, we argue that many of the late time energy growth of the PML for Maxwell's equations observed in computations are caused by  inadequate numerical boundary treatments in the PML.  

We denote that the well--posedness and temporal stability analysis of the PML has been a topic of several works, see for instance \cite{Becache2003, LionsMetralVacus2001, HalpernPetit-BergezRauch2011, Abenius2005, Abarbanel1997, Abarbanel2002, Abarbanel2009, Abarbanel1998, Qasimov2008,  KrDuBIT2013,  DuAcoustic, DuKrApnum2013, DuKr, Becache2002, Becache2004,  AppeloHagstromKreiss2006} and many others. In \cite{HalpernPetit-BergezRauch2011}, perfect matching, well-posedness and temporal stability for  PMLs as well as some other classes of layers are discussed for general systems. Unfortunately, there is no guarantee that all solutions decay with time. In \cite{Becache2003} however, the geometric stability condition was introduce to characterize the temporal stability of IVP for the  PML. If this condition is not satisfied, then there are modes of high spatial frequencies with temporally growing amplitudes. In this paper, we consider a two space dimensional model problem satisfying the geometric stability condition but augmented with boundary conditions in the tangential and normal directions. Our primary objective in this paper is the analysis and development of high order accurate and provably stable numerical approximations of the corresponding initial boundary value  problem (IBVP).

It  is a well known fact  that the  theory and numerical techniques to solve an IBVP are  more elaborate and complicated than that of  a corresponding IVP. For a well--posed partial differential equation which is stable in the absence of boundaries can support unstable solutions, or become ill--posed, when boundaries are introduced. In fact, the analysis of the IVP is a necessary (first) step towards the analysis of the IBVP  \cite{Kreiss1989, Kreiss1970, MotamedKreiss}. For symmetric  hyperbolic systems such as the Maxwell's equations  and certain boundary conditions, using the energy method, it is possible to obtain a uniform bound on the L$_2$--norm of the  solutions (electric and magnetic fields) in terms of the L$_2$--norm of   the initial data and a boundary norm of the   boundary  data \cite{Friedrichs1967, Friedrichs1958, Friedrichs1954}. In numerical approximations of  IBVPs, most of the difficulties arise from the boundaries. Similarly, a numerical method which is stable in the absence of boundaries can support growth when boundary conditions are imposed. In the discrete setting,  it is possible though  to design stable numerical boundary procedures   by mimicking the continuous estimates  \cite{Abarbanel2000_1, Abarbanel2000_2, Nordstrom2008, Bert_Gust, Bert_Gust1998, Matt2003}. 

Effective boundary treatments are also important when PMLs are used in computations.  If there are physical boundaries, for instance in waveguides \cite{DuKr, DuWaveGuidesPML}, the corresponding boundary conditions must be accurately extended into the PML. In practice, the PML must  be solved on a bounded computational domain.  A stable   and accurate PML boundary closure becomes essential, since it enables efficient numerical computations. There is an ongoing discussion though on how to effectively terminate the PML. It is also important to point out that a numerical method which is stable in the absence of the PML can support growth when the PML is included.  A major difficulty is that in general the PML (for symmetric systems) is asymmetric. Therefore, deriving energy estimates for the PML that are useful in designing stable and accurate numerical boundary procedures can be very challenging. Linear and/or exponential temporal growth are frequently seen in numerical simulations of PMLs \cite{Abarbanel2002, Abarbanel2009, Qasimov2008},  particularly if higher order methods  are considered.
Often,  artificial numerical dissipations are added to the scheme in order to eliminate these unwanted temporal growth. Artificial numerical dissipations certainly helps in many ways, but  can also introduce  some interesting, and unfortunately undesirable, effects. In standard computational electromagnetic codes, discrete stability of the numerical method is usually investigated in the absence of the PML. Once the interior scheme is confirmed stable, the PML is then included by discretizing the accompanying auxiliary differential equations and appending the auxiliary functions accordingly, without further discrete stability analysis. As in \cite{Abarbanel2002, Abarbanel2009, Qasimov2008}, any eventual growth  seen when the absorption function is present is then attributed to the PML.  We emphasize that a more complete analysis of discrete stability must include  all the equations and variables in the PML.

In bounded and semi--bounded domains the stability of the PML has  been analyzed for certain classes of problems and boundary conditions \cite{DuWaveGuidesPML, DuBoundaryPML} using normal mode analysis. These results  \cite{DuWaveGuidesPML, DuBoundaryPML} are mostly for second order systems. In the continuous setting, it is possible  to extend some of these theoretical  results to the  PML for first order systems. However, numerical treatments for first order systems are essentially different from numerical methods for second order systems. For instance, standard boundary conditions for first order hyperbolic systems can only contain a  linear combination of the unknown variables,  while boundary conditions for second order systems often involve  combinations of the unknown variables and their spatial and temporal derivatives. Therefore, for first order systems different numerical techniques and,  probably,   new theories   are  required.

In the present paper, we consider  unsplit PMLs \cite{Abarbanel2002, Abarbanel2009, DuKrApnum2013} and the classical split--field PML \cite{Berenger1994} , for Maxwell's equations in first order form  with boundary conditions set by an electromagnetic waveguide. We are particularly interested in high order accurate and efficient methods for numerical simulation of Maxwell's equations.  
Our first objective is to  extend the continuous analysis for Cauchy problems \cite{DuKrApnum2013} to the PML in  semi--bounded and bounded domains.   The first sets of results are the proofs of  stability of the constant coefficient PML in a lower half--plane and a left half--plane. The second results in the paper are the derivation of energy estimates for constant coefficients in the Laplace space and for variable coefficients PML in the time domain.   

We show that a  stable numerical boundary procedure for the undamped problem can become unstable when the PML is introduced.   

The second objective of the paper is to develop a high order accurate and provably stable numerical approximation  for the PML in a  bounded domain. We use summation--by--parts (SBP) finite difference  operators \cite{Bert_Gust, Strand94}   to approximate spatial derivatives.  Boundary conditions are enforced weakly    using a penalty technique, referred to as the simultaneous approximation term (SAT) methodology \cite{CarpenterGottliebAbarbanel1994, Matt2003, Nordstrom2008}. By  mimicking the continuous  energy estimates we construct accurate and stable boundary procedures. The  convergence of the numerical method follows. 
Finally, we  present numerical experiments, using SBP finite difference  operators in space with interior order of accuracy 2, 4 and 6,  and the classical fourth order accurate Runge--Kutta scheme in time.  The numerical examples illustrate  high order accuracy and longtime stability of the PML. 

Note that in \cite{Abarbanel2002, Abarbanel2009} it was pointed out that the discrete PML for Maxwell's equations in first order form can suffer severely  from longtime exponential energy growth  when spatial derivatives are discretized with high order accurate central finite difference operators and and the time derivative  discretized using Runge--Kutta schemes. This indicates that an appropriate remedy is required. The results obtained in this paper provide a cure for this problem without any additional cost.  We believe that the results obtained  in this paper can be extended to other situations, not only for finite difference electromagnetic  codes, but also for finite volume methods, finite element methods and spectral  methods, and  for more complicated systems such as the elastic wave equation in isotropic media and the  linearized Euler equation with vanishing mean flow.

The remainder of the paper is planned as follows. In the next section, we present some preliminaries, introducing Maxwell's equations in two space dimensions and a brief description of a discrete approximation  of the Maxwell's equations in  a bounded  domain using the SBP-SAT methodology.
 In section 3, we introduce the PMLs and present some instructive numerical experiments, motivating the continuous analysis in section  4 and the discrete analysis in section 5. In section 5, we construct discrete approximations for the PML, derive discrete energy estimates and prove asymptotic stability.
 More numerical experiments are presented in section 6, verifying the analysis of previous sections.
 A summary of the paper and suggestions for future work are offered in the last section. 
\section{Preliminary}
Here, we introduce the Maxwell's equations in a two  dimensional rectangular domain. We end the section with a   brief description of  the SBP--SAT methodology for Maxwell's equations.
\subsection{The Maxwell's equations in a rectangular domain}
Consider the transverse magnetic $(TM_z)$ case of the Maxwell equation  in a source free, homogeneous isotropic medium,
\begin{equation}\label{eq:Maxwell}
\begin{split}
 &\frac{\partial{E_z}}{\partial t} = -\frac{\partial{H_y}}{\partial x} + \frac{\partial{H_x}}{\partial y},\\
 &\frac{\partial{H_y}}{\partial t} = -\frac{\partial{E_z}}{\partial x}, \\
 &\frac{\partial{H_x}}{\partial t} = \frac{\partial{E_z}}{\partial y}.\\
  \end{split}
  \end{equation}
Here,
$
\textbf{H} = \left(H_x, H_y, 0\right)^T, \quad \textbf{E} = \left(0, 0, E_z\right)^T
 $
are the magnetic fields and electric fields respectively.  
At the initial   time $t = 0$ we set the smooth initial data
\[ 
  \left(E_z\left(x, y, 0\right), H_y\left(x, y, 0\right), H_x\left(x, y, 0\right)\right)^T = \left(f_1(x,y), f_2(x,y), f_3(x,y) \right)^T. 
\]
Consider now the two space dimensional rectangular domain $\left(x, y\right) \in [-x_0-\delta, x_0+\delta] \times [-y_0, y_0]$, with $x_0, y_0, \delta > ~0$. 
The  length and width of the rectangle are $\Gamma_x := \{ x: -x_0-\delta \le x\le x_0+\delta\} $ and  $\Gamma_y := \{ y: -y_0 \le y\le y_0\}$. At the edges of the rectangle, at $x = \pm \left(x_0+\delta\right)$ and $y = \pm y_0$, we set the general boundary conditions, expressed as  a relation between the incoming and outgoing  characteristics  
\begin{equation}\label{eq:wall_bc_x}
 \frac{1}{2}\left({E_z \mp   H_y}\right) = \frac{R_x}{2} \left(E_z \pm H_y \right) \iff \frac{\left(1-R_x\right)}{2} E_z \mp \frac{\left(1+R_x\right)}{2}H_y = 0, \quad \text{at} \quad x = \pm \left(x_0+\delta\right),
\end{equation}
\begin{equation}\label{eq:wall_bc_y_0}
\frac{1}{2} \left(E_z \pm H_x \right)= \frac{R_y}{2} \left(E_z \mp H_x \right)  \iff \frac{\left(1-R_y\right)}{2} E_z \pm \frac{\left(1+R_y\right)}{2}H_x = 0, \quad \text{at} \quad y = \pm y_0.
\end{equation}
Here, the reflection coefficients $R_j$  $(j = x, y),$ with $|R_j| \le 1$, are real constants. 
Note that the boundary condition \eqref{eq:wall_bc_x}, \eqref{eq:wall_bc_y_0} can support boundary waves: waves whose amplitudes are maximum on the boundaries, at $x = \pm \left(x_0+\delta\right)$, $y = \pm y_0$, but decay exponentially into the domain. In particular if $R_j = 1$, then the boundary conditions \eqref{eq:wall_bc_x}, \eqref{eq:wall_bc_y_0} model insulated electromagnetic walls.   Glancing waves, constant modes normal to the boundaries,  can be supported.  If $R_j = -1$, then the boundary conditions \eqref{eq:wall_bc_x}, \eqref{eq:wall_bc_y_0} model  perfect electric conductor  (PEC) boundary conditions. And if $R_j = 0$, then the boundary conditions \eqref{eq:wall_bc_x}, \eqref{eq:wall_bc_y_0} correspond to  local one dimensional absorbing boundary conditions, which are also equivalent to setting the incoming characteristics (Riemann invariant) at the boundaries to zero.

Note  that if  $R_j\ne -1$ then we have
\begin{equation}\label{eq:wall_bc_x_0}
 \gamma_x E_z \mp H_y = 0, \quad \text{at} \quad x = \pm \left(x_0+\delta\right), \quad  \gamma_x =  \frac{\left(1-R_x\right)}{\left(1+R_x\right)} \ge 0,
\end{equation}
\begin{equation}\label{eq:wall_bc_y}
 \gamma_y E_z \pm H_x = 0, \quad \text{at} \quad y = \pm y_0, \quad  \gamma_y =  \frac{\left(1-R_y\right)}{\left(1+R_y\right)} \ge 0.
\end{equation}
The special case  of  $R_j= -1$ in  \eqref{eq:wall_bc_x}, \eqref{eq:wall_bc_y_0} corresponds to  PEC boundary conditions 
\begin{equation}\label{eq:wall_bc_x_pec}
  E_z  = 0, \quad \text{at} \quad x = \pm \left(x_0+\delta\right),
\end{equation}
\begin{equation}\label{eq:wall_bc_y_pec}
  E_z  = 0, \quad \text{at} \quad y = \pm y_0.
\end{equation}

Let the standard $L_2$ inner product and the corresponding norm be defined by
\begin{equation}
\left(u, v\right) = \int_{\Omega}v^* u \mathrm{d}x\mathrm{d}y, \quad \|u\|^2 = \left(u, u\right).
\end{equation}
We also introduce the boundary norms  $\|\cdot\|^2_{\Gamma_x}$, $\|\cdot\|^2_{\Gamma_y}$, defined by
\begin{equation}
\left(u_0, v_0\right)_{\Gamma_z} = \int_{\Gamma_z}v_0^* u_0\mathrm{d}z, \quad \|u_0\|^2_{\Gamma_z} = \left(u_0, u_0\right)_{\Gamma_z}, \quad z \in \Gamma_z, \quad z = x, y.
\end{equation}
Here, $v^*, v_0^*$ are the complex conjugates of $v, v_0$ and $u_0(z), v_0(z)$ depend on one spatial variable $z = x$ or $z = y$.
Using the energy method and integration by parts, it is  straightforward to show that the Maxwell's equation \eqref{eq:Maxwell} subject to the boundary conditions \eqref{eq:wall_bc_x_0}, \eqref{eq:wall_bc_y},  with $\gamma_j \ge 0$, satisfies
\begin{equation}\label{eq:strict}
\begin{split}
&\|E_z\left(t\right)\|^2  + \|H_y\left(t\right)\|^2  + \|H_x\left(t\right)\|^2  + 2\gamma_x \int_0^t \left(\|E_z\left(t', x_0+\delta\right)\|^2_{\Gamma_y} + \|E_z\left(t', -x_0-\delta\right)\|^2_{\Gamma_y}\right)dt' \\
&+ 2\gamma_y \int_0^t \left(\|E_z\left(t', y_0\right)\|^2_{\Gamma_x} + \|E_z\left(t', -y_0\right)\|^2_{\Gamma_x}\right)dt'= \|f_1\|^2  + \|f_2\|^2  + \|f_3\|^2.
\end{split}
\end{equation}
In particular if  $\gamma_j = 0$ in \eqref{eq:wall_bc_x_0}, \eqref{eq:wall_bc_y} or $R_j = -1$ (we consider the PEC condition) in \eqref{eq:wall_bc_x},  \eqref{eq:wall_bc_y_0} then 
\begin{equation}\label{eq:conserve}
\|E_z\left(t\right)\|^2  + \|H_y\left(t\right)\|^2  + \|H_x\left(t\right)\|^2  = \|f_1\|^2  + \|f_2\|^2  + \|f_3\|^2.
\end{equation}
Note that the $L_2$-norm of the solutions (electric and magnetic fields) are uniformly bounded in time by the $L_2$-norm of the initial data.
\subsection{A stable discrete approximation of the Maxwell's equations using the SBP--SAT methodology}\label{sec:SBP_SAT}
This subsection serves as a brief introduction to the SBP--SAT scheme for IBVPs (Maxwell's equations in bounded domains). The main idea is to approximate all spatial derivatives in \eqref{eq:Maxwell} using SBP finite difference operators. The boundary conditions  \eqref{eq:wall_bc_x_0}, \eqref{eq:wall_bc_y} are then enforced weakly using penalties. However, penalty parameters must be chosen such that discrete energy estimates, analogous to the continuous estimates \eqref{eq:strict}, \eqref{eq:conserve}, can be derived. Thus, proving asymptotic stability for the discretization. 
More elaborate discussions on this topic can be found in \cite{Abarbanel2000_1, Abarbanel2000_2, Bert_Gust, Bert_Gust1998, CarpenterGottliebAbarbanel1994, Matt2003, Nordstrom2008}, see also the recent review paper \cite{SvardNordstrom2014}.

To begin with, consider the uniform discretization of the unit interval with $N_x$ number of grid points and the spatial step, $h >0$,
\begin{align}\label{eq:grid}
x_i = \left(i-1\right)h, \quad i = 1, 2, \dots, N_x, \quad h = \frac{1}{N_x-1}.
\end{align}
Let $D_x$ denote a differentiation matrix approximating  the  first derivative,
 $
 D_x \approx {\partial}/{\partial x},
 $
on the grid \eqref{eq:grid}.
The matrix $D_x$ is an  SBP operator (in one space dimension), see \cite{Bert_Gust, Strand94},  if   the following properties hold
\begin{equation}\label{eq:SBPDx}
\begin{split}
&D_x = \mathrm{P}_x^{-1}Q_x, \quad Q_x^{T} + Q_x = E_{Rx}-E_{Lx}.
\end{split}
\end{equation}
Here, $ E_{Lx} = \mathrm{diag}(1, 0, 0, \ldots, 0)$, $ E_{Rx} = \mathrm{diag}(0, 0, 0, \ldots, 1)$ pick out the left and right boundary values. The matrix $\mathrm{P}_x$ is symmetric and positive definite, and thus defines a discrete norm.  The matrix $Q_x$ is almost skew-symmetric.  

It is important to note that SBP operators can be constructed using Galerkin finite/spectral element approaches \cite{Gassner2013, KormannKronbichlerMuller}. 
 The one space dimensional  SBP operator $D_x$ can be extended to higher space dimensions using  the Kronecker product,  $\otimes$, defined by
\begin{Definition}[Kronecker Products]\label{def:section26}
Let A be an m-by-n matrix and B be a p-by-q matrix. Then $A \otimes B$, the Kronecker product of A and B, is the (mp)-by-(nq) matrix
\\
\begin{displaymath}
A \otimes B = \begin{pmatrix} 
a_{11}B&a_{12}B&\ldots&a_{1n}B \\
\hdotsfor[4]{4}\\
\hdotsfor[4]{4}\\
a_{m1}B&a_{m2}B&\hdots&a_{mn}B
\end{pmatrix}.
\end{displaymath}
\end{Definition}
The following properties hold for Kronecker products
\begin{enumerate}
\item Assume that the products AC and BD are well defined then\\
$(A \otimes B)  (C \otimes D) = (A C) \otimes (B  D)$,
\item If A and B are invertible, then $(A \otimes B)^{-1} = A^{-1} \otimes B^{-1}$,
\item $(A \otimes B)^{T} = A^{T} \otimes B^{T}$.
\end{enumerate}

Now consider the Maxwell's equation in a two dimensional rectangular domain.
To begin, we discretize  the spatial domain in the $x$- and $y$-directions using $N_x, N_y$ grid points with constant spatial steps $h_x, h_y$, respectively. 
A two dimensional scalar grid function, $\mathbf{u} = [u_{ij}]$, is stacked as a vector of length $N_xN_y$,
\[
 \mathbf{u} = \left({u}_{11}, {u}_{12}, \cdots, {u}_{N_xN_y}\right)^T.
 \]
Spatial derivatives are discretized using SBP operators,
and  the boundary conditions, \eqref{eq:wall_bc_y_0} and \eqref{eq:wall_bc_x},  are impose weakly using SAT \cite{CarpenterGottliebAbarbanel1994, Matt2003, Nordstrom2008}.  The SBP--SAT approximation of the Maxwell's equation \eqref{eq:Maxwell} subject the boundary conditions, \eqref{eq:wall_bc_y_0} and \eqref{eq:wall_bc_x},  is
\begin{subequations}\label{eq:Maxwell_WaveGuide_Discrete}
    \begin{alignat}{2}
     \frac{\mathrm{d}{\mathbf{E}_z}}{\mathrm{d t}} &= -\left(D_x \otimes I_y\right)\mathbf {H}_y + \left(I_x \otimes D_y\right)\mathbf {H}_x   \underbrace{-\alpha_x\left(\frac{1-R_x}{2}\left(\mathrm{P}_x^{-1}\left(E_{Rx}+E_{Lx}\right)\otimes I_y \right)\mathbf {E}_z  
 - \frac{1+R_x}{2}\left(\mathrm{P}_x^{-1}\left(E_{Rx}-E_{Lx}\right)\otimes I_y \right)\mathbf {H}_y\right)}_{\mathrm{SAT}_x} \notag \\
 & \underbrace{-\alpha_y \left( \frac{1-R_y}{2}\left(I_x \otimes \mathrm{P}_y^{-1}\left(E_{Ry}+E_{Ly}\right)\right)\mathbf {E}_z +\frac{1+R_y}{2}\left(I_x \otimes \mathrm{P}_y^{-1}\left(E_{Ry}-E_{Ly}\right)\right)\mathbf {H}_x\right)}_{\mathrm{SAT}_y}, \label{eq:Maxwell_WaveGuide_Discrete_1} \\
     \frac{\mathrm{d}{\mathbf{H}_y}}{\mathrm{d t}}  &= -\left(D_x \otimes I_y\right)\mathbf {E}_z \underbrace{+\theta_x\left(\frac{1-R_x}{2}\left(\mathrm{P}_x^{-1}\left(E_{Rx}-E_{Lx}\right)\otimes I_y \right)\mathbf {E}_z  
 - \frac{1+R_x}{2}\left(\mathrm{P}_x^{-1}\left(E_{Rx}+E_{Lx}\right)\otimes I_y \right)\mathbf {H}_y\right)}_{\mathrm{SAT}_x}, \label{eq:Maxwell_WaveGuide_Discrete_2} \\
     \frac{\mathrm{d}{\mathbf{H}_x}}{\mathrm{d t}}  &= \left(I_x \otimes D_y\right)\mathbf {E}_z \underbrace{-\theta_y \left( \frac{1-R_y}{2}\left(I_x \otimes \mathrm{P}_y^{-1}\left(E_{Ry}-E_{Ly}\right)\right)\mathbf {E}_z +\frac{1+R_y}{2}\left(I_x \otimes \mathrm{P}_y^{-1}\left(E_{Ry}+E_{Ly}\right)\right)\mathbf {H}_x\right)}_{\mathrm{SAT}_y}.\label{eq:Maxwell_WaveGuide_Discrete_3}
    \end{alignat}
  \end{subequations}
Here, the operators $I_x, I_y$ are identity matrices, and $\alpha_x, \alpha_y, \theta_x, \theta_y$ are  the penalty parameters chosen by requiring stability. Note that the boundary conditions   \eqref{eq:wall_bc_y}, \eqref{eq:wall_bc_x} are enforced weakly by the SAT--terms, $\mathrm{SAT}_x$ and $\mathrm{SAT}_y$, defined on the right hand side of equation
\eqref{eq:Maxwell_WaveGuide_Discrete_1}. This is   opposed to strong enforcement of boundary conditions,  where boundary conditions are  explicitly imposed by injection. 

By construction the SBP--SAT scheme \eqref{eq:Maxwell_WaveGuide_Discrete} is a consistent approximation of the IBVP,  \eqref{eq:Maxwell}, \eqref{eq:wall_bc_y_0} and \eqref{eq:wall_bc_x}.
The critical aspect of the scheme lies in  chosing penalties and ensuring numerical stability. Since the IBVP,  \eqref{eq:Maxwell}, \eqref{eq:wall_bc_y_0}, \eqref{eq:wall_bc_x}, does not support any temporal growth, it is imperative that penalties parameters must be chosen in a manner that does not allow any growth in time. The penalty parameters, $\alpha_x, \alpha_y, \theta_x, \theta_y$, are not necessarily unique. Often the choice of stable  penalty parameters are guided by constructing discrete energy estimates which, as far as possible, mimic the continuous energy estimates \eqref{eq:conserve}, \eqref{eq:strict}. In particular, penalty parameters are chosen such that the corresponding discrete boundary terms dissipate energy, at least, as fast as the continuous problem. However, from a practical point of view, for efficient explicit time integration, penalty parameters must also be chosen in order to avoid numerical stiffness. Note that if $|\alpha_x|, |\alpha_y|, |\theta_x|, |\theta_y| \gg 1$, then the ordinary differential equation (ODE)  \eqref{eq:Maxwell_WaveGuide_Discrete} is increasingly stiff.

 For more elaborate discussions and results on numerical boundary procedure, please see \cite{Bert_Gust, Bert_Gust1998,CarpenterGottliebAbarbanel1994, Matt2003, Nordstrom2008, SvardNordstrom2014}.  Note that the  weak boundary treatment  \eqref{eq:Maxwell_WaveGuide_Discrete_1}--\eqref{eq:Maxwell_WaveGuide_Discrete_3}  is often used for high order accurate discontinuous Galerkin (dG) finite element  methods \cite{Gassner2013, JanTim} and  also for pseudo spectral methods \cite{FunaroGottlieb}. 

 We will now chose the penalty parameters $\alpha_x, \alpha_y, \theta_x, \theta_y$ such that the semi--discrete approximation \eqref{eq:Maxwell_WaveGuide_Discrete_1}--\eqref{eq:Maxwell_WaveGuide_Discrete_3} is stable. 
To begin with, we define the discrete scalar product and the corresponding discrete  norm
\begin{equation}
\left\langle \mathbf{E}, \mathbf{H}\right\rangle_{\mathbf{P}_{xy}} = \mathbf{H}^T{\mathbf{P}_{xy}}\mathbf{E}, \quad \left\|  \mathbf{E} \right\|^2_{\mathbf{P}_{xy}} =  \left\langle \mathbf{E}, \mathbf{E}\right\rangle_{\mathbf{P}_{xy}} , \quad {\mathbf{P}_{xy}} = \mathrm{P}_x \otimes \mathrm{P}_y.
\end{equation}
Note that from the  SBP property \eqref{eq:SBPDx}  we have 
\begin{equation}\label{eq:SBPD2x}
\begin{split}
&D_x = \mathrm{P}_x^{-1}Q_x =  \mathrm{P}_x^{-1}\left(-\left(\mathrm{P}_x^{-1}Q_x\right)^{T} \mathrm{P}_x + E_{Rx}-E_{Lx}\right) = -\mathrm{P}_x^{-1}D_x^{T}\mathrm{P}_x + \mathrm{P}_x^{-1}\left(E_{Rx}-E_{Lx}\right).
\end{split}
\end{equation}
Thus, using the SBP property \eqref{eq:SBPDx}  we   can rewrite the  right hand side of  \eqref{eq:Maxwell_WaveGuide_Discrete_1} to obtain
\begin{subequations}\label{eq:Maxwell_WaveGuide_Discrete_sbp}
    \begin{alignat}{2}
     \frac{\mathrm{d}{\mathbf{E}_z}}{\mathrm{d t}} &= {\mathbf{P}_{xy}^{-1}}\left(D_x^T \otimes I_y\right){\mathbf{P}_{xy}}\mathbf {H}_y - {\mathbf{P}_{xy}^{-1}}\left(I_x \otimes D_y^T\right){\mathbf{P}_{xy}}\mathbf {H}_x     -\alpha_x\frac{1-R_x}{2}{\mathbf{P}_{xy}^{-1}}\left(\left(E_{Rx}+E_{Lx}\right)\otimes \mathrm{P}_y \right)\mathbf {E}_z  \notag \\
& - \frac{2-\alpha_x(1+R_x)}{2}{\mathbf{P}_{xy}^{-1}}\left(\left(E_{Rx}-E_{Lx}\right)\otimes \mathrm{P}_y \right)\mathbf {H}_y 
  -\alpha_y  \frac{1-R_y}{2}{\mathbf{P}_{xy}^{-1}}\left(\mathrm{P}_x \otimes \mathrm{I}_y\left(E_{Ry}+E_{Ly}\right)\right)\mathbf {E}_z  \notag \\
  & +\frac{2-\alpha_y(1+R_y)}{2}{\mathbf{P}_{xy}^{-1}}\left(\mathrm{P}_x \otimes \left(E_{Ry}-E_{Ly}\right)\right)\mathbf {H}_x, \label{eq:Maxwell_WaveGuide_Discrete_1_sbp} \\
     \frac{\mathrm{d}{\mathbf{H}_y}}{\mathrm{d t}}  &= -\left(D_x \otimes I_y\right)\mathbf {E}_z \underbrace{+\theta_x\left(\frac{1-R_x}{2}{\mathbf{P}_{xy}^{-1}}\left(\left(E_{Rx}-E_{Lx}\right)\otimes \mathrm{P}_y \right)\mathbf {E}_z  
 - \frac{1+R_x}{2}{\mathbf{P}_{xy}^{-1}}\left(\left(E_{Rx}+E_{Lx}\right)\otimes \mathrm{P}_y \right)\mathbf {H}_y\right)}_{\mathrm{SAT}_x}, \label{eq:Maxwell_WaveGuide_Discrete_2_sbp} \\
     \frac{\mathrm{d}{\mathbf{H}_x}}{\mathrm{d t}}  &= \left(I_x \otimes D_y\right)\mathbf {E}_z \underbrace{-\theta_y \left( \frac{1-R_y}{2}{\mathbf{P}_{xy}^{-1}}\left(\mathrm{P}_x \otimes \left(E_{Ry}-E_{Ly}\right)\right)\mathbf {E}_z +\frac{1+R_y}{2}{\mathbf{P}_{xy}^{-1}}\left(\mathrm{P}_x \otimes \left(E_{Ry}+E_{Ly}\right)\right)\mathbf {H}_x\right)}_{\mathrm{SAT}_y}.\label{eq:Maxwell_WaveGuide_Discrete_3_sbp}
    \end{alignat}
  \end{subequations}
By applying the energy method to  \eqref{eq:Maxwell_WaveGuide_Discrete_1_sbp}--\eqref{eq:Maxwell_WaveGuide_Discrete_3_sbp},  that is, we multiply \eqref{eq:Maxwell_WaveGuide_Discrete_1_sbp}, \eqref{eq:Maxwell_WaveGuide_Discrete_2_sbp}, \eqref{eq:Maxwell_WaveGuide_Discrete_3_sbp} from the left with $\mathbf{E}_z^T\mathbf{P}_{xy}$,  $\mathbf{H}_y^T\mathbf{P}_{xy}$, $\mathbf{H}_x^T\mathbf{P}_{xy}$, respectively, and add the transpose of the products, we obtain
\begin{equation}\label{eq:etimate_step1}
 \frac{d}{dt}\left(\left\|  \mathbf{E}_z\left(t\right) \right\|^2_{\mathbf{P}_{xy}} +  \left\|  \mathbf{H}_y\left(t\right) \right\|^2_{\mathbf{P}_{xy}}  +  \left\|  \mathbf{H}_x\left(t\right) \right\|^2_{\mathbf{P}_{xy}}\right) = - \textbf{BT}_s .
\end{equation}
Therefore, the discrete approximation \eqref{eq:Maxwell_WaveGuide_Discrete_1}--\eqref{eq:Maxwell_WaveGuide_Discrete_3} is asymptotically stable if the boundary terms $\textbf{BT}_s$ are nonnegative.
If $R_x \ne -1$ and $R_y \ne -1$, then we can choose 
\[
\alpha_x = \frac{2}{1+R_x}, \quad \alpha_y = \frac{2}{1+R_y}, \quad \theta_x = \frac{2\bar{\theta}_x}{1+R_x}, \quad \theta_y =  \frac{2\bar{\theta}_y}{1+R_y},
\]
to obtain
\begin{align*}
\textbf{BT}_s(t) &= 
\underbrace{- \sum_{j=1}^{N_y} \begin{pmatrix} \mathbf{E}_{z1j}\\  \mathbf{H}_{y1j} \end{pmatrix}^T {A}_x^{-}\begin{pmatrix} \mathbf{E}_{z1j}\\  \mathbf{H}_{y1j} \end{pmatrix}P_{yjj} - \sum_{j=1}^{N_y} \begin{pmatrix} \mathbf{E}_{zN_xj}\\  \mathbf{H}_{yN_xj} \end{pmatrix}^T  {A}_x^{+}\begin{pmatrix} \mathbf{E}_{zN_xj}\\  \mathbf{H}_{yN_xj} \end{pmatrix}P_{yjj}}_{\text{dissipative boundary terms in the x--axis}}\\
&\underbrace{- \sum_{i=1}^{N_x} \begin{pmatrix} \mathbf{E}_{zi1}\\  \mathbf{H}_{yi1} \end{pmatrix}^T  {A}_y^{-} \begin{pmatrix} \mathbf{E}_{zi1}\\  \mathbf{H}_{yi1} \end{pmatrix} P_{xii}- \sum_{i=1}^{N_x} \begin{pmatrix} \mathbf{E}_{ziN_y}\\  \mathbf{H}_{yiN_y} \end{pmatrix}^T  {A}_y^{+} \begin{pmatrix} \mathbf{E}_{ziN_y}\\  \mathbf{H}_{yiN_y} \end{pmatrix} P_{xii}}_{\text{dissipative boundary terms in the y--axis}}
\end{align*}
where
\begin{align*}
 {A}_x^{-} = \begin{pmatrix}
\gamma_x & -\frac{\bar{\theta}_x\gamma_x}{2} \\
-\frac{\bar{\theta}_x\gamma_x}{2} & \bar{\theta}_x
\end{pmatrix}, \quad
A_x^{+} = \begin{pmatrix}
\gamma_x & \frac{\bar{\theta}_x\gamma_x}{2} \\
\frac{\bar{\theta}_x\gamma_x}{2} & \bar{\theta}_x
\end{pmatrix}, \quad
 {A}_y^{-} = \begin{pmatrix}
\gamma_y & \frac{\bar{\theta}_y\gamma_y}{2} \\
\frac{\bar{\theta}_y\gamma_y}{2} & \bar{\theta}_y
\end{pmatrix},
\quad
A_y^{+} = \begin{pmatrix}
\gamma_y & -\frac{\bar{\theta}_y\gamma}{2} \\
-\frac{\bar{\theta}_y\gamma_y}{2} & \bar{\theta}_y
\end{pmatrix},
\end{align*}
are $2\times 2$ symmetric matrices and 
\[
\gamma_x = \frac{1-R_x}{1+R_x} \ge 0, \quad \gamma_y = \frac{1-R_y}{1+R_y} \ge 0.
\]
 To ensure numerical stability the symmetric matrices ${A}_x^{\pm}, {A}_y^{\pm}$ defined above must have nonnegative eigenvalues.
The eigenvalues of the matrices ${A}_x^{\pm}, {A}_y^{\pm}$ are
\[
\lambda_{x}^{\pm} = \frac{\left(\gamma_x+\bar{\theta}_x\right) \pm \sqrt{\left(\gamma_x+\bar{\theta}_x\right)^2 -4\gamma_x\bar{\theta}_x\left(1-\frac{\gamma_x\bar{\theta}_x}{4}\right) }}{2},
\quad
\lambda_{y}^{\pm} = \frac{\left(\gamma_y+\bar{\theta}_y\right) \pm \sqrt{\left(\gamma_y+\bar{\theta}_y\right)^2 -4\gamma_y\theta_y\left(1-\frac{\gamma_y\bar{\theta}_y}{4}\right) }}{2}.
\]
Clearly, the eigenvalues are real and nonnegative if
\[
0\le \bar{\theta}_x\le \frac{4}{\gamma_x}, \quad 0\le \bar{\theta}_y\le \frac{4}{\gamma_y}.
\]
By comparing $\mathbf{BT}_s$ to the continuous analog in the right hand side of \eqref{eq:strict} we can determine how fast  energy is dissipated by the numerical boundary treatment.
Note that with  $\bar{\theta}_x = \bar{\theta}_y = 0$, the boundary treatment dissipates energy as fast as the continuous problem, and we have
\[
\textbf{BT}_s  = 2\mathbf{E}_z^T \left(\left(E_{Rx}+E_{Lx}\right)\otimes \gamma_x\mathrm{P}_y  + \gamma_y \mathrm{P}_x\otimes \left(E_{Ry}+E_{Ly}\right)\right)\mathbf {E}_z,
\]
 thus, implying
\begin{equation}\label{eq:energy_estimate2}
\begin{split}
& \left\|  \mathbf{E}_z\left(t\right) \right\|^2_{\mathbf{P}_{xy}} +  \left\|  \mathbf{H}_y\left(t\right) \right\|^2_{\mathbf{P}_{xy}}  +  \left\|  \mathbf{H}_x\left(t\right) \right\|^2_{\mathbf{P}_{xy}} + 2\int_0^t\left(\mathbf{E}_z^T \left(\left(E_{Rx}+E_{Lx}\right)\otimes \gamma_x\mathrm{P}_y  + \gamma_y \mathrm{P}_x\otimes \left(E_{Ry}+E_{Ly}\right)\right)\mathbf {E}_z\right) dt'  \\
 &=   
 \left\| \mathbf{f}_1 \right\|^2_{\mathbf{P}_{xy}} +  \left\|  \mathbf{f}_2\right\|^2_{\mathbf{P}_{xy}}  +  \left\|  \mathbf{f}_3 \right\|^2_{\mathbf{P}_{xy}}.
 \end{split}
\end{equation}
Note that \eqref{eq:energy_estimate2} is completely analogous to \eqref{eq:strict}.
  Note also that standard finite element and nodal finite volume methods for Maxwell's equation \eqref{eq:Maxwell} are  constructed to satisfy  analogous discrete equations \eqref{eq:Maxwell_WaveGuide_Discrete_1_sbp}--\eqref{eq:Maxwell_WaveGuide_Discrete_3_sbp} and  energy estimates \eqref{eq:etimate_step1}  and \eqref{eq:energy_estimate2}.
  
If we choose $\theta_x > 0,$ $ \theta_y >  0$, then the numerical boundary treatment  will dissipate energy  faster than the continuous problem.

 More generally, for any $|R_x| \le 1$ and $|R_y| \le 1$, then we can choose 
\[
\alpha_x =  \alpha_y =  \theta_x = \theta_y = 1,
\]
to obtain
\begin{align*}
\textbf{BT}_s  &= \mathbf{E}_z^T \left(\left({1-R_x}\right)\left(E_{Rx}+E_{Lx}\right)\otimes \mathrm{P}_y  + \left(1-R_y\right) \mathrm{P}_x\otimes \left(E_{Ry}+E_{Ly}\right)\right)\mathbf {E}_z + \left({1+R_x}\right) \mathbf{H}_y^T \left(\left(E_{Rx}+E_{Lx}\right)\otimes \mathrm{P}_y\right)\mathbf {H}_y \\
&+  \left(1+R_y\right)\mathbf{H}_x^T \left( \mathrm{P}_x\otimes \left(E_{Ry}+E_{Ly}\right)\right)\mathbf {H}_x.
\end{align*}
Note that $\textbf{BT}_s \ge 0$ and 
  \begin{equation}\label{eq:energy_estimate3}
\begin{split}
& \left\|  \mathbf{E}_z\left(t\right) \right\|^2_{\mathbf{P}_{xy}} +  \left\|  \mathbf{H}_y\left(t\right) \right\|^2_{\mathbf{P}_{xy}}  +  \left\|  \mathbf{H}_x\left(t\right) \right\|^2_{\mathbf{P}_{xy}} + 2\int_0^t\textbf{BT}_s(t') dt'  \\
 &=   
 \left\| \mathbf{f}_1 \right\|^2_{\mathbf{P}_{xy}} +  \left\|  \mathbf{f}_2\right\|^2_{\mathbf{P}_{xy}}  +  \left\|  \mathbf{f}_3 \right\|^2_{\mathbf{P}_{xy}}.
 \end{split}
\end{equation}

 Thus, the discrete solutions of  \eqref{eq:Maxwell_WaveGuide_Discrete_1}--\eqref{eq:Maxwell_WaveGuide_Discrete_3} are uniformly bounded in time by the initial data. Therefore, the discrete approximation \eqref{eq:Maxwell_WaveGuide_Discrete_1}--\eqref{eq:Maxwell_WaveGuide_Discrete_3} with $\alpha_x =  \alpha_y =  \theta_x = \theta_y = 1$ is also asymptotically stable.

\begin{remark}
The SBP operators used in the present study  are high order accurate finite difference  operators  with  diagonal norms $\mathrm{P}_x$, where the boundary stencils are $r$th order accurate and the interior accuracy is $2r.$
\end{remark}

As will be seen in the next section, the PML will be placed in the $x$-direction. 
The primary goal of this paper is to study the numerical difficulties, such as stability and stiffness, arising from  the numerical boundary procedure \eqref{eq:Maxwell_WaveGuide_Discrete} when the PML is introduced. We will also investigate the optimal penalty parameters for the PML.
The PML will be terminated by the characteristic boundary condition, \eqref{eq:wall_bc_x} with $R_x = 0$. It is possible though to  terminate the PML with the PEC boundary condition, \eqref{eq:wall_bc_x} with $R_x = -1$.  However, the characteristic boundary condition at the external boundary have been shown to yield smaller spurious reflections, due to the modeling error, when combined with the PML than the PEC  backed PML \cite{JinChew}. 
In order to closely make comparisons with the results published in the literature \cite{Abarbanel1998, Abarbanel2002, Abarbanel2009} we will  impose the characteristic boundary condition, \eqref{eq:wall_bc_y_0} with $R_y = 0$, in the $y$--direction. Nevertheless, it is  possible though to use the PEC boundary condition, \eqref{eq:wall_bc_y_0} with $R_y = -1$, also in the $y$--direction.

\section{Termination of a computational domain using a PML}
Let us now consider  the solutions of  the Maxwell's equations  \eqref{eq:Maxwell}, in  the truncated domain $-x_0 \le x \le ~{x_0},  -y_0 \le y \le y_0 $, where $x = \pm x_0$ are  artificial boundaries introduced to limit the computational domain. In order to absorb outgoing waves at $x = \pm x_0$,  we have introduced two layers, of width $\delta > 0$,  having $x_0\le |x|~\le x_0+\delta$  in which the PML equations are solved.  Note that in the $x$-direction the  boundary condition is \eqref{eq:wall_bc_x} and   in the  $y$-direction   the boundary condition  is \eqref{eq:wall_bc_y_0}.  This setup can model an infinitely long electromagnetic waveguide truncated by the PML. 

In this section, we will derive PML models that truncate the computational domain in the $x$--direction  using two standard approaches,  the complex coordinate stretching technique \cite{Chew1994, DuKrApnum2013} and Berenger's splitting method \cite{Berenger1994}. The two approaches are mathematically analogous but yield two different formulations of the PML, the unsplit PML and  split--field PML, respectively.  The difference lies on the choice of variables in the time domain. As we will see below, by appropriate choice of variables the split-field \cite{Berenger1994}  can be formulated as the unsplit  \cite{DuKrApnum2013}. This shows that the general solutions of the two PML formulations are identical.

Let the Laplace transform of ${u}\left(x,y,t\right)$ be defined by
\begin{equation}
\widehat{{u}}(x,y,s)  = \int_0^{\infty}e^{-st}{u}\left(x,y,t\right)\text{dt},  \quad s = a + ib, \quad \Re{s} = a > 0.
\end{equation}
In order to derive the PML we  assume homogeneous initial data and take the Laplace transform  of the Maxwell's equations  \eqref{eq:Maxwell} in time, having  
\begin{equation}\label{eq:Maxwell_Laplace}
\begin{split}
 &s{\widehat{E}_z} = -\frac{\partial{\widehat{H}_y}}{\partial x} + \frac{\partial{\widehat{H}_x}}{\partial y},\\
 &s{\widehat{H}_y} = -\frac{\partial{\widehat{E}_z}}{\partial x}, \\
 &s{\widehat{H}_x} = \frac{\partial{\widehat{E}_z}}{\partial y}.\\
  \end{split}
  \end{equation}
Here $s$, with the positive real part $\Re{s} > 0$,  is the dual variable to time. Note that we have tacitly assumed homogeneous initial data.
\subsection{Unsplit PML and complex coordinate stretching}
 A general approach to deriving a PML was introduced in \cite{Chew1994}. 
 The main idea is to analytically continue the Maxwell's equation \eqref{eq:Maxwell_Laplace}  to a complex spatial coordinate system where all spatially oscillating solutions are turned into  exponentially decaying solutions. By using the complex coordinate stretching  technique it is straightforward to construct the PML for many wave equations. However, in order to localize the PML in time auxiliary variables are often introduced,  to avoid explicit convolution operations. 
 
 A PML  truncating the boundary in the $x$--direction can be derived by considering the, Laplace transformed, Maxwell's equation  \eqref{eq:Maxwell_Laplace}   and applying the complex coordinate transformation 
 \begin{equation}\label{eq:Complex_Transform}
\begin{split}
 \frac{\partial}{\partial x} \to \frac{1}{S_x}\frac{\partial}{\partial x},
  \end{split}
  \end{equation}
  we have
  \begin{subequations}\label{eq:Maxwell_Laplace_PML_Unsplit}
\begin{alignat}{2}
 &s{\widehat{E}_z} = -\frac{1}{S_x}\frac{\partial{\widehat{H}_y}}{\partial x} + \frac{\partial{\widehat{H}_x}}{\partial y},\label{eq:Maxwell_Laplace_PML_Unsplit_1}\\
 &s{\widehat{H}_y} = -\frac{1}{S_x}\frac{\partial{\widehat{E}_z}}{\partial x}, \label{eq:Maxwell_Laplace_PML_Unsplit_2}\\
 &s{\widehat{H}_x} = \frac{\partial{\widehat{E}_z}}{\partial y}, \label{eq:Maxwell_Laplace_PML_Unsplit_3}
  \end{alignat}
 \end{subequations}
 where $S_x$ is a PML complex metric. In this study, we will use the complex metric    $S_x = 1 + \sigma\left(x\right)/s$ corresponding to the standard PML model.  Here, $\sigma(x)\ge0$ is a positive real function that vanishes in $ |x|~\le x_0$ and $s$ is the dual variable to $t$. More complicated complex metrics can be derived by using a modal ansatz  \cite{Hagstrom2003, AppeloHagstromKreiss2006},  constructed to support only spatially decaying modes.
 
 The next step is to invert  the Laplace transforms in \eqref{eq:Maxwell_Laplace_PML_Unsplit} back to the time domain. To do this we  multiply equations \eqref{eq:Maxwell_Laplace_PML_Unsplit_1}--\eqref{eq:Maxwell_Laplace_PML_Unsplit_2} by  $S_x$ and 
 introduce the auxiliary variable
 \[
 s\widehat{E}_z^{(y)} = \frac{\partial{\widehat{H}_x}}{\partial y}.
 \]
 Inverting the Laplace transforms yields  the time--dependent  PML model
 \begin{subequations}\label{eq:Maxwell_PML_Split_2}
    \begin{alignat}{2}
 &\frac{\partial{E_z}}{\partial t} = -\frac{\partial{{H}_y}}{\partial x} + \frac{\partial{{H}_x}}{\partial y} + \sigma(x){{E}_z^{(y)}} - \sigma(x){{E}_z} , \label{eq:Maxwell_PML_Split_21}\\
 &\frac{\partial{H}_y}{\partial t} = -\frac{\partial{E_z}}{\partial x} - \sigma(x){{H}_y}, \label{eq:Maxwell_PML_Split_22} \\
 &\frac{\partial{H}_x}{\partial t} = \frac{\partial{E_z}}{\partial y} , \label{eq:Maxwell_PML_Split_23}\\
 &\frac{\partial{E}_z^{(y)}}{\partial t} =  \frac{\partial{{H}_x}}{\partial y} . \label{eq:Maxwell_PML_Split_24}
  \end{alignat}
  \end{subequations}
  Using instead the auxiliary variable 
   \[
 s\widehat{H}_x^{*} = \sigma(x)\frac{\partial{\widehat{H}_x}}{\partial y},
 \]
 and inverting the Laplace transform yields the time--dependent  PML model
  \begin{subequations}\label{eq:Maxwell_PML_WaveGuide}
    \begin{alignat}{2}
      \frac{\partial{ E_z}}{\partial t} &= -\frac{\partial{ H_y}}{\partial x} + \frac{\partial{ H_x}}{\partial y} + H^{*}_x - \sigma(x) E_z, \label{eq:Maxwell_PML_WaveGuide_1}\\
     \frac{\partial{ H_y}}{\partial t}  &= -\frac{\partial{ E_z}}{\partial x} - \sigma(x) H_y, \label{eq:Maxwell_PML_WaveGuide_2}\\
     \frac{\partial{ H_x}}{\partial t} &= \frac{\partial{ E_z}}{\partial y}, \label{eq:Maxwell_PML_WaveGuide_3}\\
    \frac{\partial{ H_x^*}}{\partial t} &=   \sigma(x)\frac{\partial{ H_x}}{\partial y}
    \label{eq:Maxwell_PML_WaveGuide_4}.
    \end{alignat}
  \end{subequations}
 
 We can also choose auxiliary variables to obtain the physically motivated PML used in  \cite{Abarbanel1998, Abarbanel2002, Abarbanel2009}.
 To begin, we consider \eqref{eq:Complex_Transform} and define the auxiliary variables 
 \[
 \widehat{\widetilde{H}}_x = S_x\widehat{H}_x, \quad s\widehat{P} = \frac{\sigma(x)}{S_x}\widehat{\widetilde{H}}_x.
 \]
 Inverting the Laplace transform and omitting the tilde signs  ( ${\widetilde{}}$ )   results in the physically motivated 
 unsplit PML of  \cite{Abarbanel1998, Abarbanel2002, Abarbanel2009}
\begin{subequations}\label{eq:Maxwell_PML_Ababarnel}
    \begin{alignat}{2}
      \frac{\partial{ E_z}}{\partial t} &= -\frac{\partial{ H_y}}{\partial x} + \frac{\partial{ H_x}}{\partial y}  - \sigma\left(x\right)E_z, \label{eq:Maxwell_PML_Ababarnel_1}
 \\
     \frac{\partial{ H_y}}{\partial t}  &= -\frac{\partial{ E_z}}{\partial x} - \sigma\left(x\right) H_y, \label{eq:Maxwell_PML_Ababarnel_2}\\
     \frac{\partial{ H_x}}{\partial t} &= \frac{\partial{ E_z}}{\partial y} + \sigma\left(x\right)\left( H_x -  P\right), \label{eq:Maxwell_PML_Ababarnel_3}\\
     \frac{\partial{ P}}{\partial t}  &=  \sigma\left(x\right)\left( H_x -  P\right).\label{eq:Maxwell_PML_Ababarnel_4}
    \end{alignat}
  \end{subequations}
Note that   the PML  \eqref{eq:Maxwell_PML_Ababarnel} was originally derived using physical arguments  \cite{Abarbanel1998, Abarbanel2002}.
There are certainly  other ways to choose auxiliary variables, see for example \cite{Abarbanel1998, Abarbanel2002, Abarbanel2009, AppeloHagstromKreiss2006, Gedney1996}. However, all resulting PML models can be shown to be linearly equivalent to \eqref{eq:Maxwell_PML_WaveGuide_1}--\eqref{eq:Maxwell_PML_WaveGuide_4}, \eqref{eq:Maxwell_PML_Ababarnel_1}--\eqref{eq:Maxwell_PML_Ababarnel_4} and \eqref{eq:Maxwell_PML_Split_21}--\eqref{eq:Maxwell_PML_Split_24}.
Since in the Laplace space the PML models \eqref{eq:Maxwell_PML_WaveGuide}, \eqref{eq:Maxwell_PML_Split_2},
 \eqref{eq:Maxwell_PML_Ababarnel} correspond to \eqref{eq:Maxwell_Laplace_PML_Unsplit}, a complex change of coordinates, it follows that the equations are perfectly matched  to the Maxwell's equation \eqref{eq:Maxwell}  by construction \cite{AppeloHagstromKreiss2006}.
\subsection{Berenger's  split-field PML}
Berenger's original idea \cite{Berenger1994} was to split the electric field and the corresponding equations  into two artificial tangential components before special lower order terms that simulate the absorption of waves are added. That is, consider the Maxwell's equation \eqref{eq:Maxwell_Laplace} in the Laplace space. Introduce the split variables ${\widehat{E}_z} = {\widehat{E}_z^{(x)}} + {\widehat{E}_z^{(y)}}$ and  equations  
\begin{subequations}\label{eq:Maxwell_Laplace_PML_Split}
    \begin{alignat}{2}
 &s{\widehat{E}_z^{(x)}} = -\frac{\partial{\widehat{H}_y}}{\partial x} - \sigma(x){\widehat{E}_z^{(x)}},\\
 &s{\widehat{H}_y} = -\frac{\partial{\left(\widehat{E}_z^{(x)} + \widehat{E}_z^{(y)}\right)}}{\partial x} - \sigma(x){\widehat{H}_y}, \\
 &s{\widehat{H}_x} = \frac{\partial{\left(\widehat{E}_z^{(x)} + \widehat{E}_z^{(y)}\right)}}{\partial y},\\
 &s{\widehat{E}_z^{(y)}} =  \frac{\partial{\widehat{H}_x}}{\partial y} ,\\
  \end{alignat}
  \end{subequations}
  where $\sigma(x)\ge0$ is a positive real function that vanishes in $ |x|~\le x_0$. Note that equation \eqref{eq:Maxwell_Laplace_PML_Split} can be derived by splitting the variables and equations, and then introducing the  complex coordinate transformation \eqref{eq:Complex_Transform}.
  This shows that the two approaches are mathematically analogous. However, unlike the unsplit PML \eqref{eq:Maxwell_Laplace_PML_Unsplit}, the split-field PML \eqref{eq:Maxwell_Laplace_PML_Split} introduces the additional degrees of  before the PML transformation is introduced.  It can be shown that the restriction of the general solution to the PML \eqref{eq:Maxwell_Laplace_PML_Split} in $ |x|~\le x_0$ coincides with the general solution to the Maxwell's equation \eqref{eq:Maxwell_Laplace}, see \cite{AppeloHagstromKreiss2006, HalpernPetit-BergezRauch2011}. Therefore,  the  PML equation \eqref{eq:Maxwell_Laplace_PML_Split} is perfectly matched to the Maxwell's equation \eqref{eq:Maxwell_Laplace} by construction.
  By inverting the Laplace transforms in \eqref{eq:Maxwell_Laplace_PML_Split} we obtain the time-dependent split-field PML  \cite{Berenger1994}
\begin{subequations}\label{eq:Maxwell_PML_Split_1}
    \begin{alignat}{2}
 &\frac{\partial{E_z^{(x)}}}{\partial t} = -\frac{\partial{{H}_y}}{\partial x} - \sigma(x){{E}_z^{(x)}}, \label{eq:Maxwell_PML_Split_11}\\
 &\frac{\partial{H}_y}{\partial t} = -\frac{\partial{\left({E}_z^{(x)} + {E}_z^{(y)}\right)}}{\partial x} - \sigma(x){{H}_y}, \label{eq:Maxwell_PML_Split_13}\\
 &\frac{\partial{H}_x}{\partial t} = \frac{\partial{\left({E}_z^{(x)} + {E}_z^{(y)}\right)}}{\partial y} \label{eq:Maxwell_PML_Split_14},\\
 &\frac{\partial{E}_z^{(y)}}{\partial t} =  \frac{\partial{{H}_x}}{\partial y}  \label{eq:Maxwell_PML_Split_12}.
  \end{alignat}
  \end{subequations}
  
  We will show  that the split-field PML \eqref{eq:Maxwell_PML_Split_1} can be reformulated as  the unsplit PML models \eqref{eq:Maxwell_PML_WaveGuide}, \eqref{eq:Maxwell_PML_Split_2}.
  Following \cite{LionsMetralVacus2001}, it is possible to re-introduce the electric field $E_z$ in the PML equations while eliminating one of the split variables. By summing \eqref{eq:Maxwell_PML_Split_11}, \eqref{eq:Maxwell_PML_Split_12}  together and using the identity ${{E}_z} =  {{E}_z^{(x)}} + {{E}_z^{(y)}} \implies {{E}_z^{(x)}} = {{E}_z} - {{E}_z^{(y)}}$ we again obtain the PML equation \eqref{eq:Maxwell_PML_Split_2}.
 We can also rewrite the equation \eqref{eq:Maxwell_PML_Split_2} in the form \eqref{eq:Maxwell_PML_WaveGuide}.
 Multiplying equation \eqref{eq:Maxwell_PML_Split_24} by $\sigma\left(x\right)$ and introducing $H_x^* =  \sigma\left(x\right){{E}_z^{(y)}}$
 we obtain  \eqref{eq:Maxwell_PML_Split_2}.

 Note that in \cite{DuKrApnum2013} the Berenger's PML \eqref{eq:Maxwell_PML_Split_1} and the modal unsplit PML,  in the form                          
\eqref{eq:Maxwell_PML_WaveGuide}, gave identical numerical results. The turning point is in efficient numerical implementation. Implementing the split--field  PML also requires the splitting of the electric field in the interior. The unsplit PML appears as a lower order modification of the Maxwell equations and does not require such unphysical splitting of the field variables.
 
 In the Laplace space, the PML models \eqref{eq:Maxwell_PML_Split_11}--\eqref{eq:Maxwell_PML_Split_14}, \eqref{eq:Maxwell_PML_WaveGuide_1}--\eqref{eq:Maxwell_PML_WaveGuide_4}, \eqref{eq:Maxwell_PML_Ababarnel_1}--\eqref{eq:Maxwell_PML_Ababarnel_4} and \eqref{eq:Maxwell_PML_Split_21}--\eqref{eq:Maxwell_PML_Split_24}   are mathematically equivalent \cite{DuKrApnum2013}. The difference in the time domain lies on the choice of auxiliary variables. By standard definitions  the physically motivated PML \eqref{eq:Maxwell_PML_Ababarnel_1}--\eqref{eq:Maxwell_PML_Ababarnel_4} satisfies the definition of a strongly hyperbolic system.   On the order hand, also by standard definitions the modal PMLs \eqref{eq:Maxwell_PML_WaveGuide_1}--\eqref{eq:Maxwell_PML_WaveGuide_4}, \eqref{eq:Maxwell_PML_Split_21}--\eqref{eq:Maxwell_PML_Split_24}  and the split--field PML  \eqref{eq:Maxwell_PML_Split_11}--\eqref{eq:Maxwell_PML_Split_14} are weakly hyperbolic. It is a bit strange though to say that  the strongly hyperbolic system \eqref{eq:Maxwell_PML_Ababarnel_1}--\eqref{eq:Maxwell_PML_Ababarnel_4} is  equivalent to the weakly hyperbolic problems  \eqref{eq:Maxwell_PML_WaveGuide_1}--\eqref{eq:Maxwell_PML_WaveGuide_4}, \eqref{eq:Maxwell_PML_Split_21}--\eqref{eq:Maxwell_PML_Split_24}, \eqref{eq:Maxwell_PML_Split_11}--\eqref{eq:Maxwell_PML_Split_14}. 
However, as we will see in the next section, it is possible to rewrite the PMLs \eqref{eq:Maxwell_PML_WaveGuide_1}--\eqref{eq:Maxwell_PML_WaveGuide_4}, \eqref{eq:Maxwell_PML_Split_21}--\eqref{eq:Maxwell_PML_Split_24}, \eqref{eq:Maxwell_PML_Split_11}--\eqref{eq:Maxwell_PML_Split_14} as a second order system and derive an energy estimate  for the solutions in a certain Sobolev norm. Thus, showing  well--posedness  also for the  time--dependent PML  models \eqref{eq:Maxwell_PML_WaveGuide_1}--\eqref{eq:Maxwell_PML_WaveGuide_4}, \eqref{eq:Maxwell_PML_Split_21}--\eqref{eq:Maxwell_PML_Split_24}, \eqref{eq:Maxwell_PML_Split_11}--\eqref{eq:Maxwell_PML_Split_14}.
 \subsection{Some instructive numerical examples}\label{sect:num_example}
 In this study, we will investigate discrete stability of  three PMLs resulting from the modifications of   the discrete equations  \eqref{eq:Maxwell_WaveGuide_Discrete_1}--\eqref{eq:Maxwell_WaveGuide_Discrete_3}.
  The  first  candidate is an unsplit modal PML  recently derived in \cite{DuKrApnum2013} and  given by \eqref{eq:Maxwell_PML_WaveGuide_1}--\eqref{eq:Maxwell_PML_WaveGuide_4}. 
Note that the results obtained for the PML \eqref{eq:Maxwell_PML_WaveGuide_1}--\eqref{eq:Maxwell_PML_WaveGuide_4} can be easily extended to  the  PML model \eqref{eq:Maxwell_PML_Split_21}--\eqref{eq:Maxwell_PML_Split_24}. We will not consider the  PML model \eqref{eq:Maxwell_PML_Split_21}--\eqref{eq:Maxwell_PML_Split_24} in this study.
 The second   is the so called  physically motivated unsplit PML of  \cite{Abarbanel1998, Abarbanel2002, Abarbanel2009} defined by \eqref{eq:Maxwell_PML_Ababarnel_1}--\eqref{eq:Maxwell_PML_Ababarnel_4},
and the third  candidate is the classical  Berenger's \cite{Berenger1994}  split--field PML  \eqref{eq:Maxwell_PML_Split_11}--\eqref{eq:Maxwell_PML_Split_14}.

Here, the damping function is given by the third degree monomial
\begin{equation}\label{eq:Damping_x}
\begin{split}
&\sigma(x) = \left \{
\begin{array}{rl}
0 \quad {}  \quad {}& \text{if} \quad |x| \le x_0,\\
d_0\Big(\frac{|x|-x_0}{\delta}\Big)^3  & \text{if}  \quad |x| \ge x_0.
\end{array} \right.
\end{split}
\end{equation}
  The damping coefficient  is $d_0 =  4/\left(2\times \delta \right)\log\left(1/\text{tol}\right)$, which allows a relative modeling error of magnitude $\sim \text{tol}$ from the outer boundaries, see \cite{DuKrApnum2013, DuAcoustic,  SjPe}. Note that $\text{tol}$ is a user input parameter.

We will now perform some instructive numerical experiments which are the motivations of  the analysis in the coming sections. We consider a typical test problem as in \cite{Abarbanel2002, Abarbanel2009} with $R_x = R_y = 0$ in \eqref{eq:wall_bc_y_0}, \eqref{eq:wall_bc_x} and we set $x_0 = 50$, $y_0 = 50$, and the PML width is $\delta = 10.$ We discretize the domain with a uniform spatial step $h_x=h_y =h = 1$. 
The PML equations \eqref{eq:Maxwell_PML_WaveGuide_1}--\eqref{eq:Maxwell_PML_WaveGuide_4}  are discretized straightforwardly. That is,  we replace the spatial derivative in \eqref{eq:Maxwell_PML_WaveGuide_4} with a finite difference SBP operator and append  the  discrete auxiliary functions  to the discrete equations \eqref{eq:Maxwell_WaveGuide_Discrete_1}--\eqref{eq:Maxwell_WaveGuide_Discrete_3}, we have
\begin{subequations}\label{eq:Maxwell_PML1_WaveGuide}
    \begin{alignat}{2}
       \frac{\mathrm{d}{\mathbf{E}_z}}{\mathrm{d t}} &= -\left(D_x \otimes I_y\right)\mathbf {H}_y + \left(I_x \otimes D_y\right)\mathbf {H}_x +\mathbf {H}_x^* - \mathbf{\sigma}\mathbf{E}_z \notag \\
    &   \underbrace{-\alpha_x\left(\frac{1-R_x}{2}\left(\mathrm{P}_x^{-1}\left(E_{Rx}+E_{Lx}\right)\otimes I_y \right)\mathbf {E}_z  
 - \frac{1+R_x}{2}\left(\mathrm{P}_x^{-1}\left(E_{Rx}-E_{Lx}\right)\otimes I_y \right)\mathbf {H}_y\right)}_{\mathrm{SAT}_x} \notag \\
 & \underbrace{-\alpha_y \left( \frac{1-R_y}{2}\left(I_x \otimes \mathrm{P}_y^{-1}\left(E_{Ry}+E_{Ly}\right)\right)\mathbf {E}_z +\frac{1+R_y}{2}\left(I_x \otimes \mathrm{P}_y^{-1}\left(E_{Ry}-E_{Ly}\right)\right)\mathbf {H}_x\right)}_{\mathrm{SAT}_y},
\label{eq:Maxwell_PML_WaveGuide_Discrete_11} \\
      \frac{\mathrm{d}{\mathbf{H}_y}}{\mathrm{d t}}  &= -\left(D_x \otimes I_y\right)\mathbf {E}_z- \mathbf{\sigma}\mathbf {H}_y \underbrace{+\theta_x\left(\frac{1-R_x}{2}\left(\mathrm{P}_x^{-1}\left(E_{Rx}-E_{Lx}\right)\otimes I_y \right)\mathbf {E}_z  
 - \frac{1+R_x}{2}\left(\mathrm{P}_x^{-1}\left(E_{Rx}+E_{Lx}\right)\otimes I_y \right)\mathbf {H}_y\right)}_{\mathrm{SAT}_x},\label{eq:Maxwell_PML_WaveGuide_Discrete_21} \\
     \frac{\mathrm{d}{\mathbf{H}_x}}{\mathrm{d t}}  &= \left(I_x \otimes D_y\right)\mathbf {E}_z \underbrace{-\theta_y \left( \frac{1-R_y}{2}\left(I_x \otimes \mathrm{P}_y^{-1}\left(E_{Ry}-E_{Ly}\right)\right)\mathbf {E}_z +\frac{1+R_y}{2}\left(I_x \otimes \mathrm{P}_y^{-1}\left(E_{Ry}+E_{Ly}\right)\right)\mathbf {H}_x\right)}_{\mathrm{SAT}_y}, \label{eq:Maxwell_PML_WaveGuide_Discrete_31}\\ 
   \frac{\mathrm{d}{\mathbf{H}_x^*}}{\mathrm{d t}}   &= \sigma \left(I_x \otimes D_y\right)  \mathbf {H}_x .\label{eq:Maxwell_PML_WaveGuide_Discrete_41}
    \end{alignat}
  \end{subequations}

 Note that for the PML model  \eqref{eq:Maxwell_PML_Ababarnel_1}--\eqref{eq:Maxwell_PML_Ababarnel_4},  the auxiliary differential equation \eqref{eq:Maxwell_PML_Ababarnel_4} is governed by an ordinary differential equation. The corresponding discrete PML is 
\begin{subequations}\label{eq:Maxwell_PML2_WaveGuide}
    \begin{alignat}{2}
       \frac{\mathrm{d}{\mathbf{E}_z}}{\mathrm{d t}} &= -\left(D_x \otimes I_y\right)\mathbf {H}_y + \left(I_x \otimes D_y\right)\mathbf {H}_x - \mathbf{\sigma}\mathbf{E}_z \notag \\
    &   \underbrace{-\alpha_x\left(\frac{1-R_x}{2}\left(\mathrm{P}_x^{-1}\left(E_{Rx}+E_{Lx}\right)\otimes I_y \right)\mathbf {E}_z  
 - \frac{1+R_x}{2}\left(\mathrm{P}_x^{-1}\left(E_{Rx}-E_{Lx}\right)\otimes I_y \right)\mathbf {H}_y\right)}_{\mathrm{SAT}_x} \notag \\
 & \underbrace{-\alpha_y \left( \frac{1-R_y}{2}\left(I_x \otimes \mathrm{P}_y^{-1}\left(E_{Ry}+E_{Ly}\right)\right)\mathbf {E}_z +\frac{1+R_y}{2}\left(I_x \otimes \mathrm{P}_y^{-1}\left(E_{Ry}-E_{Ly}\right)\right)\mathbf {H}_x\right)}_{\mathrm{SAT}_y}
  \label{eq:Maxwell_PML_Ababarnel_Discrete_11}\\
      \frac{\mathrm{d}{\mathbf{H}_y}}{\mathrm{d t}}  &= -\left(D_x \otimes I_y\right)\mathbf {E}_z- \mathbf{\sigma}\mathbf {H}_y \underbrace{+\theta_x\left(\frac{1-R_x}{2}\left(\mathrm{P}_x^{-1}\left(E_{Rx}-E_{Lx}\right)\otimes I_y \right)\mathbf {E}_z  
 - \frac{1+R_x}{2}\left(\mathrm{P}_x^{-1}\left(E_{Rx}+E_{Lx}\right)\otimes I_y \right)\mathbf {H}_y\right)}_{\mathrm{SAT}_x},\label{eq:Maxwell_PML_Ababarnel_Discrete_21} \\
     \frac{\mathrm{d}{\mathbf{H}_x}}{\mathrm{d t}}  &= \left(I_x \otimes D_y\right)\mathbf {E}_z +\sigma \left(\mathbf {H}_x - \mathbf {P}\right) \underbrace{-\theta_y \left( \frac{1-R_y}{2}\left(I_x \otimes \mathrm{P}_y^{-1}\left(E_{Ry}-E_{Ly}\right)\right)\mathbf {E}_z +\frac{1+R_y}{2}\left(I_x \otimes \mathrm{P}_y^{-1}\left(E_{Ry}+E_{Ly}\right)\right)\mathbf {H}_x\right)}_{\mathrm{SAT}_y}, \label{eq:Maxwell_PML_Ababarnel_Discrete_31} \\ 
   \frac{\mathrm{d}{\mathbf{P}}}{\mathrm{d t}}   &= \sigma \left(\mathbf {H}_x - \mathbf {P}\right).\label{eq:Maxwell_PML_Ababarnel_Discrete_41}
    \end{alignat}
  \end{subequations}
The corresponding semi-discretization for the split--field PML   \eqref{eq:Maxwell_PML_Split_11}--\eqref{eq:Maxwell_PML_Split_14} is 
\begin{subequations}\label{eq:Maxwell_PML3_WaveGuide}
    \begin{alignat}{2}
       \frac{\mathrm{d}{\mathbf{E}_z^{(x)}}}{\mathrm{d t}} &= -\left(D_x \otimes I_y\right)\mathbf {H}_y  - \mathbf{\sigma}\mathbf{E}_z^{(x)}
       \notag \\
    &   \underbrace{-\alpha_x\left(\frac{1-R_x}{2}\left(\mathrm{P}_x^{-1}\left(E_{Rx}+E_{Lx}\right)\otimes I_y \right)\left(\mathbf {E}_z^{(x)} + \mathbf {E}_z^{(y)}  \right)  
 - \frac{1+R_x}{2}\left(\mathrm{P}_x^{-1}\left(E_{Rx}-E_{Lx}\right)\otimes I_y \right)\mathbf {H}_y\right)}_{\mathrm{SAT}_x} \notag \\
 & \underbrace{-\alpha_y \left( \frac{1-R_y}{2}\left(I_x \otimes \mathrm{P}_y^{-1}\left(E_{Ry}+E_{Ly}\right)\right)\left(\mathbf {E}_z^{(x)} + \mathbf {E}_z^{(y)}  \right) +\frac{1+R_y}{2}\left(I_x \otimes \mathrm{P}_y^{-1}\left(E_{Ry}-E_{Ly}\right)\right)\mathbf {H}_x\right)}_{\mathrm{SAT}_y}
 %
 %
  \label{eq:Maxwell_PML_Split_Discrete_11}\\
      \frac{\mathrm{d}{\mathbf{H}_y}}{\mathrm{d t}}  &= -\left(D_x \otimes I_y\right)\left(\mathbf {E}_z^{(x)} + \mathbf {E}_z^{(y)}  \right)- \mathbf{\sigma}\mathbf {H}_y \notag\\
      &\underbrace{+\theta_x\left(\frac{1-R_x}{2}\left(\mathrm{P}_x^{-1}\left(E_{Rx}-E_{Lx}\right)\otimes I_y \right)\left(\mathbf {E}_z^{(x)} + \mathbf {E}_z^{(y)}  \right)  
 - \frac{1+R_x}{2}\left(\mathrm{P}_x^{-1}\left(E_{Rx}+E_{Lx}\right)\otimes I_y \right)\mathbf {H}_y\right)}_{\mathrm{SAT}_x}, \label{eq:Maxwell_PML_Split_Discrete_31} \\
     \frac{\mathrm{d}{\mathbf{H}_x}}{\mathrm{d t}}  &= \left(I_x \otimes D_y\right)\left(\mathbf {E}_z^{(x)} + \mathbf {E}_z^{(y)}  \right) \underbrace{-\theta_y \left( \frac{1-R_y}{2}\left(I_x \otimes \mathrm{P}_y^{-1}\left(E_{Ry}-E_{Ly}\right)\right)\left(\mathbf {E}_z^{(x)} + \mathbf {E}_z^{(y)}  \right) +\frac{1+R_y}{2}\left(I_x \otimes \mathrm{P}_y^{-1}\left(E_{Ry}+E_{Ly}\right)\right)\mathbf {H}_x\right)}_{\mathrm{SAT}_y},\label{eq:Maxwell_PML_Split_Discrete_41} \\
     \frac{\mathrm{d}{\mathbf{E}_z^{(y)}}}{\mathrm{d t}} &=  \left(I_x \otimes D_y\right)\mathbf {H}_x. \label{eq:Maxwell_PML_Split_Discrete_21}
    \end{alignat}
  \end{subequations}
The penalty parameters are
\[
\alpha_x = {2}, \quad \alpha_y = {2}, \quad \theta_x = \theta_y = 0.
\]
Similarly, as in the continuous case, using  the identity $\mathbf{E}_z  = \mathbf{E}_z^{(x)} + \mathbf{E}_z^{(y)} \implies \mathbf{E}_z^{(x)}  = \mathbf{E}_z - \mathbf{E}_z^{(y)}$ we can eliminate the auxiliary variable $\mathbf{E}_z^{(x)}$ and rewrite the split--field PML \eqref{eq:Maxwell_PML_Split_Discrete_11}--\eqref{eq:Maxwell_PML_Split_Discrete_21} exactly as  the discrete modal PML  \eqref{eq:Maxwell_PML_WaveGuide_Discrete_11}--\eqref{eq:Maxwell_PML_WaveGuide_Discrete_41}. Therefore, we expect  the PML models \eqref{eq:Maxwell_PML_Split_Discrete_11}--\eqref{eq:Maxwell_PML_Split_Discrete_21} and \eqref{eq:Maxwell_PML_WaveGuide_Discrete_11}--\eqref{eq:Maxwell_PML_WaveGuide_Discrete_41} to have identical stability properties.

Note that when the damping vanishes  $\sigma = 0$ we can eliminate the auxiliary  variables in \eqref{eq:Maxwell_PML_WaveGuide_Discrete_11}--\eqref{eq:Maxwell_PML_WaveGuide_Discrete_41},  \eqref{eq:Maxwell_PML_Ababarnel_Discrete_11}--\eqref{eq:Maxwell_PML_Ababarnel_Discrete_41}  and  the split variable in
\eqref{eq:Maxwell_PML_Split_Discrete_11}--\eqref{eq:Maxwell_PML_Split_Discrete_21} to obtain \eqref{eq:Maxwell_WaveGuide_Discrete_1}--\eqref{eq:Maxwell_WaveGuide_Discrete_3}.  
Since the discrete approximation \eqref{eq:Maxwell_WaveGuide_Discrete_1}--\eqref{eq:Maxwell_WaveGuide_Discrete_3} of the interior, when $\sigma = 0$,  is stable, we hope that the discrete approximations  \eqref{eq:Maxwell_PML_WaveGuide_Discrete_11}--\eqref{eq:Maxwell_PML_WaveGuide_Discrete_41},  \eqref{eq:Maxwell_PML_Ababarnel_Discrete_11}--\eqref{eq:Maxwell_PML_Ababarnel_Discrete_41}  and  \eqref{eq:Maxwell_PML_Split_Discrete_11}--\eqref{eq:Maxwell_PML_Split_Discrete_21}, with $\sigma \ge 0$, corresponding to the PML  are also stable. 

We set the  smooth initial data  
\begin{equation}\label{eq:Initial_Data}
 E_z(x, y, 0) = \exp\left(-\left(x^2 + y^2\right)/9\right),
\end{equation}
for the electric field and use zero initial data for all other variables.  High order accurate  SBP operators of interior order of accuracy 2, 4, 6 are used to approximate spatial derivatives. We advance the solutions in time for the three PML models using the classical fourth order accurate Runge--Kutta scheme. The final time is  $t = 5000$ and  the time step is $\mathrm{dt} = 0.4h$. Here, the modeling error is $\mathrm{tol} = 10^{-4}$. 
The time histories of the discrete $l_2$-norm of the electric field are recorded in figures \ref{fig:Model_Standard_OurPML},
\ref{fig:Model_Standard_Ababarnel} and \ref{fig:Model_Split_Field} for the discrete PML models,  \eqref{eq:Maxwell_PML_WaveGuide_Discrete_11}--\eqref{eq:Maxwell_PML_WaveGuide_Discrete_41} and  \eqref{eq:Maxwell_PML_Ababarnel_Discrete_11}--\eqref{eq:Maxwell_PML_Ababarnel_Discrete_41}, repsectively.

Clearly, from figures \ref{fig:Model_Standard_OurPML}, \ref{fig:Model_Standard_Ababarnel} and \ref{fig:Model_Split_Field},  the three discrete PML models  \eqref{eq:Maxwell_PML_WaveGuide_Discrete_11}--\eqref{eq:Maxwell_PML_WaveGuide_Discrete_41}, \eqref{eq:Maxwell_PML_Ababarnel_Discrete_11}--\eqref{eq:Maxwell_PML_Ababarnel_Discrete_41} and \eqref{eq:Maxwell_PML_Split_Discrete_11}--\eqref{eq:Maxwell_PML_Split_Discrete_21}  are unstable for SBP operators of accuracy 4 and 6. Note  that the numerical solution  for higher order  accurate schemes are more sensitive to numerical instability.   For the chosen simulation duration, $t = 5000$ (12 500 time steps), we did not observe any growth for the second order accurate case, see also figures \ref{fig:Model_Standard_OurPML}, \ref{fig:Model_Standard_Ababarnel} and 
\ref{fig:Model_Split_Field}.  It is also important to see that the stability behavior of discrete modal PML \eqref{eq:Maxwell_PML_WaveGuide_Discrete_11}--\eqref{eq:Maxwell_PML_WaveGuide_Discrete_21}  and discrete split--field PML \eqref{eq:Maxwell_PML_Split_Discrete_11}--\eqref{eq:Maxwell_PML_Split_Discrete_21} are identical. However, when  the absorption function is absent ($\sigma = 0$) the solutions, see figure \ref{fig:Model_Standard}, decay throughout the simulation for all orders of accuracy. This is consistent with the theory, since the numerical scheme, \eqref{eq:Maxwell_WaveGuide_Discrete_1}--\eqref{eq:Maxwell_WaveGuide_Discrete_3}, of the interior is provably stable. Note also when $\sigma = 0$, the corresponding ODE \eqref{eq:Maxwell_WaveGuide_Discrete_1}--\eqref{eq:Maxwell_WaveGuide_Discrete_3} is not stiff.

\begin{figure} [h!]
 \centering
\subfigure[Discrete modal PML  \eqref{eq:Maxwell_PML_WaveGuide_Discrete_11}--\eqref{eq:Maxwell_PML_WaveGuide_Discrete_41}. ]{\includegraphics[width=0.49\linewidth]{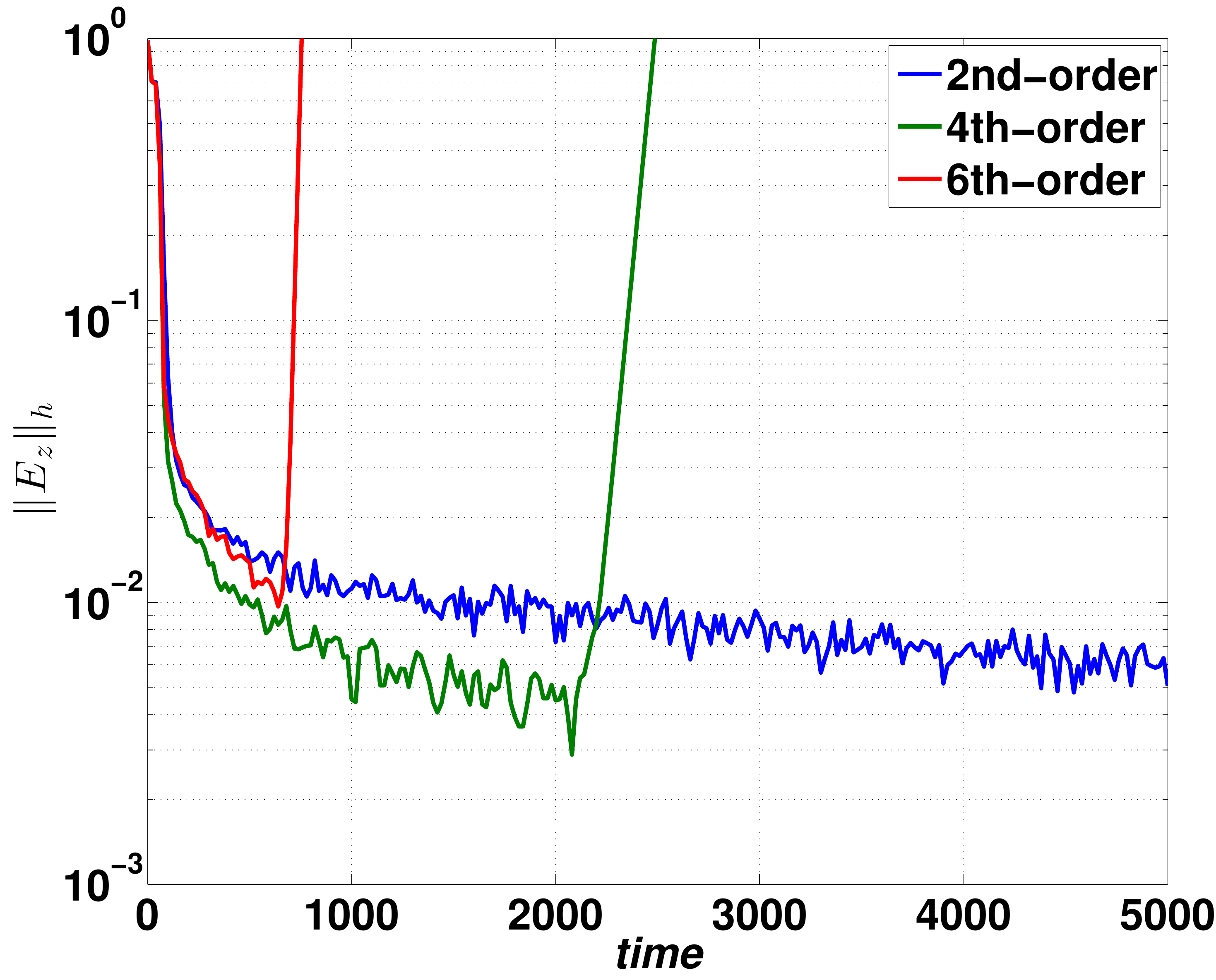}\label{fig:Model_Standard_OurPML}}
\subfigure[Discrete physically motivated PML \eqref{eq:Maxwell_PML_Ababarnel_Discrete_11}--\eqref{eq:Maxwell_PML_Ababarnel_Discrete_41}. ]{\includegraphics[width=0.49\linewidth]{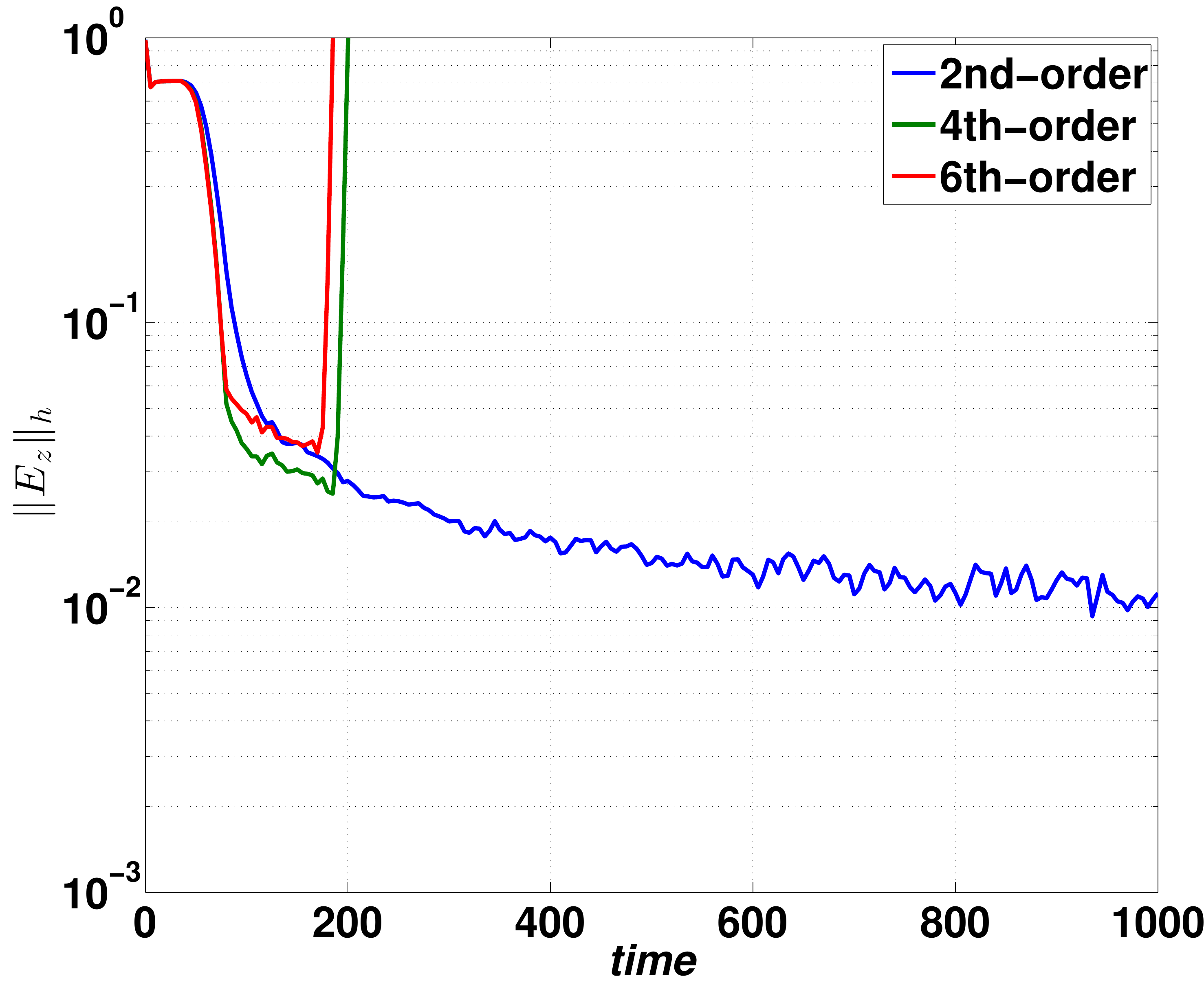}\label{fig:Model_Standard_Ababarnel}}
\subfigure[Discrete split--field PML \eqref{eq:Maxwell_PML_Split_Discrete_11}--\eqref{eq:Maxwell_PML_Split_Discrete_21}. ]{\includegraphics[width=0.5\linewidth, height=0.41\linewidth]{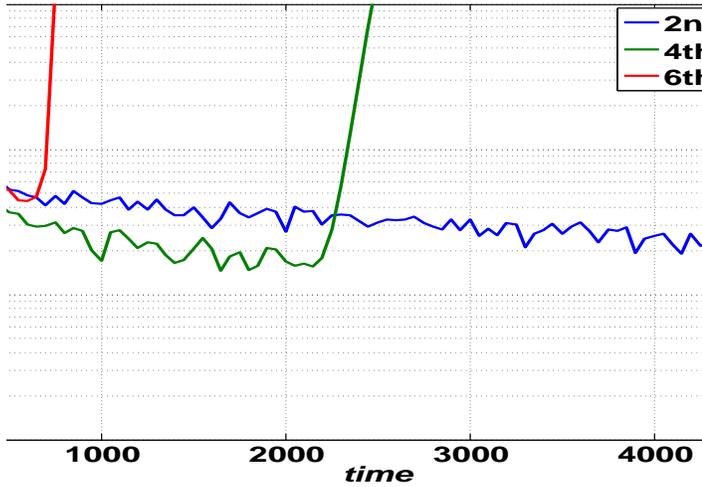}\label{fig:Model_Split_Field}}
\subfigure[No PML $(\sigma = 0)$.]{\includegraphics[width=0.49\linewidth]{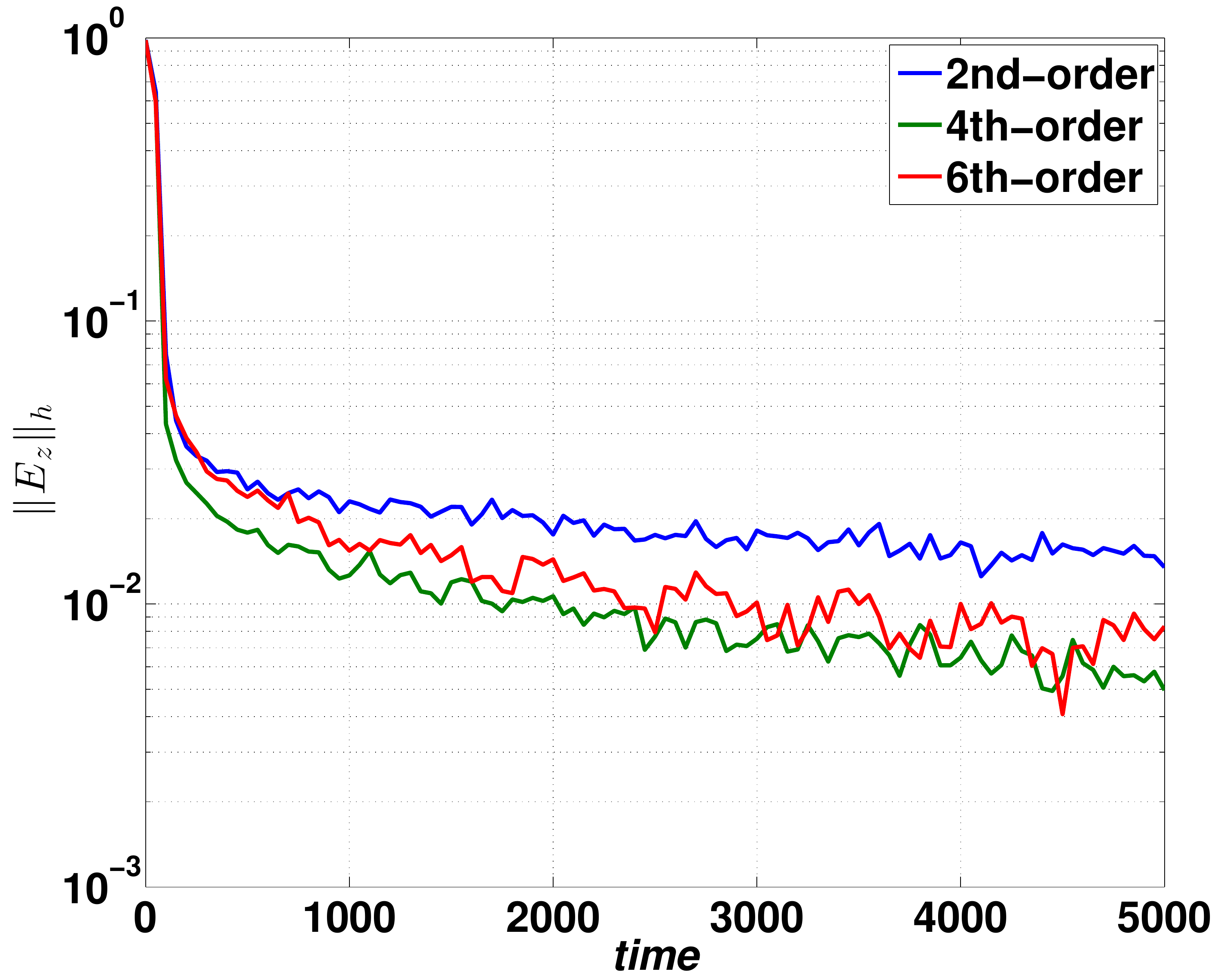}\label{fig:Model_Standard}}
 \caption{\textit{The $l_2$-norm of the electric field as a function of time  with the time-step $dt = 0.4h$}}
 \label{fig:L2Norm_waveguide_2}
\end{figure}

As in \cite{Abarbanel1998, Abarbanel2002, Abarbanel2009}, it is possible to argue here that the growth seen  for the 4th and 6th order accurate schemes are due to instabilities in the continuous PML models.   However, in \cite{DuKrApnum2013}, stability  analysis of  an IVP corresponding  to \eqref{eq:Maxwell_PML_WaveGuide_1}--\eqref{eq:Maxwell_PML_WaveGuide_4} was performed in detail. The result is that the IVP \eqref{eq:Maxwell_PML_WaveGuide_1}--\eqref{eq:Maxwell_PML_WaveGuide_4} is asymptotically stable.   We believe that the instability seen here  and,  probably, some of the growth observed in \cite{Abarbanel1998, Abarbanel2002, Abarbanel2009} are caused by inappropriate numerical boundary treatments in the PML. 

To investigate further, we have also performed numerical experiments with smaller time steps or/and spatial steps. In some cases mesh refinement or smaller time step eliminates the growth, but in other cases the instabilities persist, but  it occurs  at much later times. For instance, using half of the time step, $dt = 0.2h$, postpones the initiation of growth until about $ t= 4000$ for the  PML models \eqref{eq:Maxwell_PML_WaveGuide_1}--\eqref{eq:Maxwell_PML_WaveGuide_4} and \eqref{eq:Maxwell_PML_Split_Discrete_11}--\eqref{eq:Maxwell_PML_Split_Discrete_21} with a 4th order accurate  SBP operator.  While in all other cases growth was not seen for the entire simulation duration.  
These simulations  are  a strong  indication that the growth observed here is a numerical artifact. In particular, for the discrete physically motivated PML \eqref{eq:Maxwell_PML_Ababarnel_Discrete_11}--\eqref{eq:Maxwell_PML_Ababarnel_Discrete_41}, figure \ref{fig:Model_Standard_Ababarnel_2dt02h} indicates that the growth seen in figure  \ref{fig:Model_Standard_Ababarnel} is due to numerical stiffness. Thus, the numerical approximation \eqref{eq:Maxwell_PML_Ababarnel_Discrete_11}--\eqref{eq:Maxwell_PML_Ababarnel_Discrete_41} of the PML may not be suitable for efficient explicit time integration. These results are also  consistent with  some of the conclusions drawn in \cite{Abarbanel2009}, which show that growth for PMLs depends on both the discretization used and the PML model.  
 
 \begin{figure} [h!]
 \centering
\subfigure[Discrete  modal PML  \eqref{eq:Maxwell_PML_WaveGuide_Discrete_11}--\eqref{eq:Maxwell_PML_WaveGuide_Discrete_41}. ]{\includegraphics[width=0.49\linewidth]{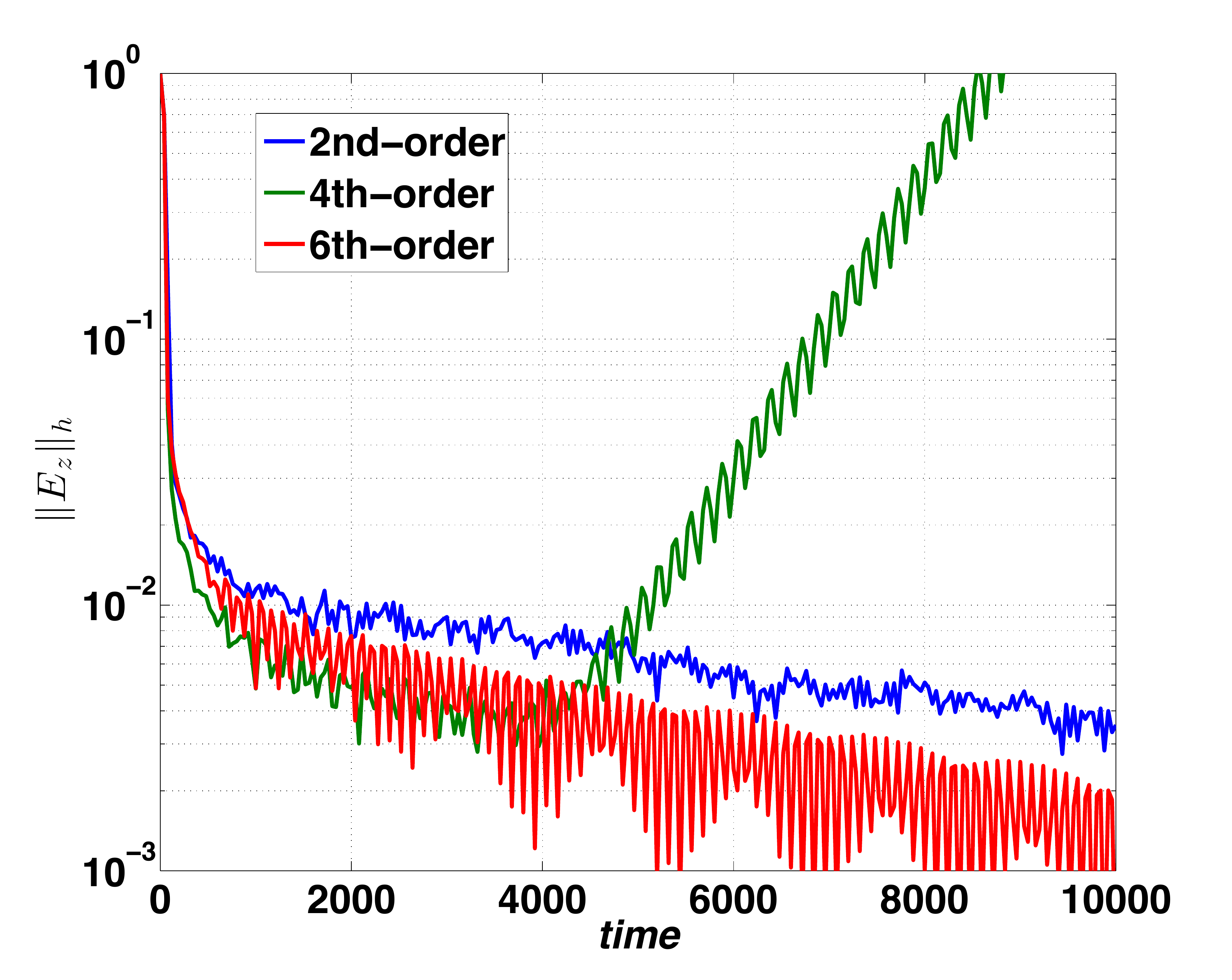}\label{fig:Model_Standard_OurPML_2dt02h}}
\subfigure[Discrete physically motivated PML \eqref{eq:Maxwell_PML_Ababarnel_Discrete_11}--\eqref{eq:Maxwell_PML_Ababarnel_Discrete_41}. ]{\includegraphics[width=0.49\linewidth]{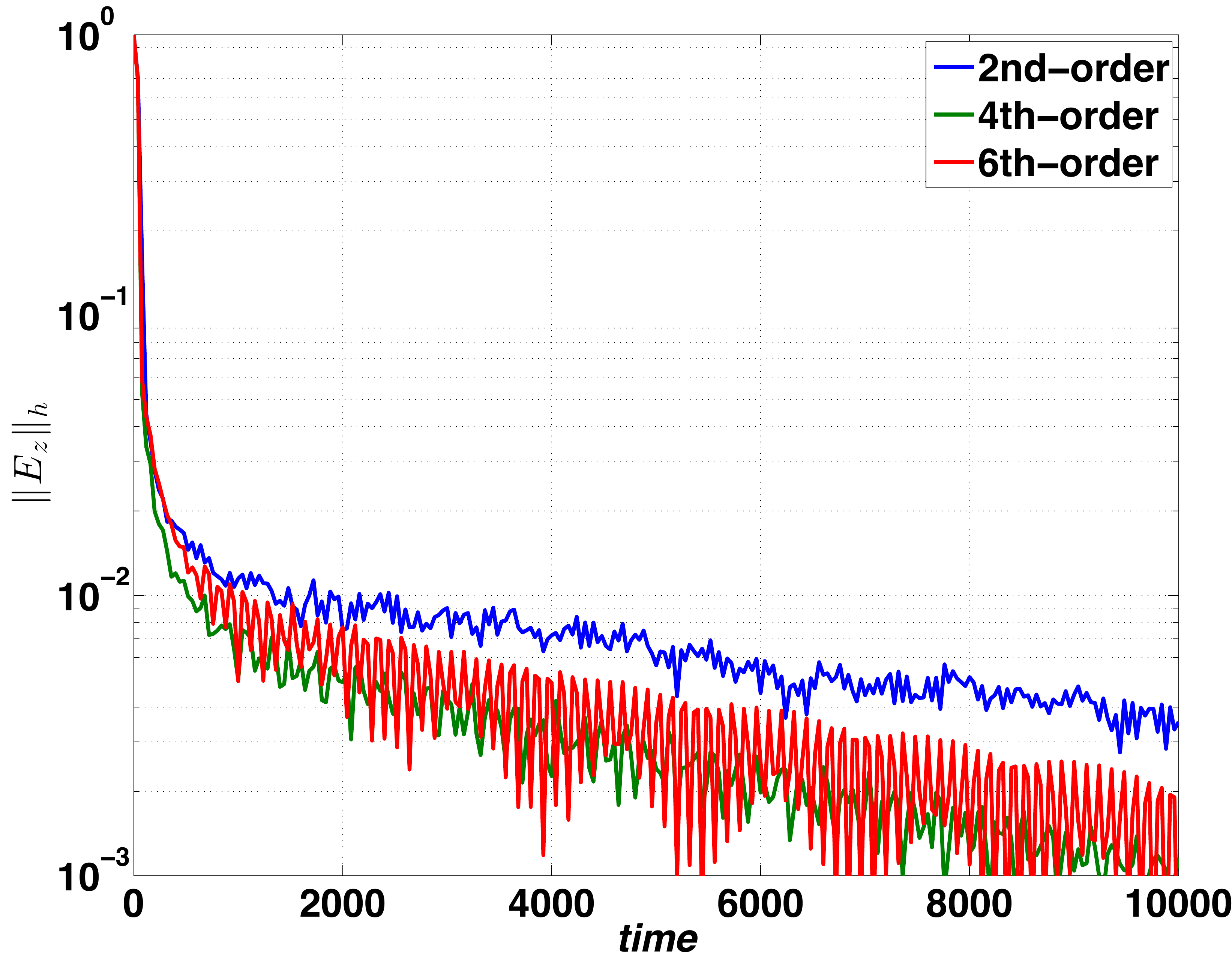}\label{fig:Model_Standard_Ababarnel_2dt02h}}
\subfigure[Discrete split--field PML \eqref{eq:Maxwell_PML_Split_Discrete_11}--\eqref{eq:Maxwell_PML_Split_Discrete_21}. ]{\includegraphics[width=0.49\linewidth]{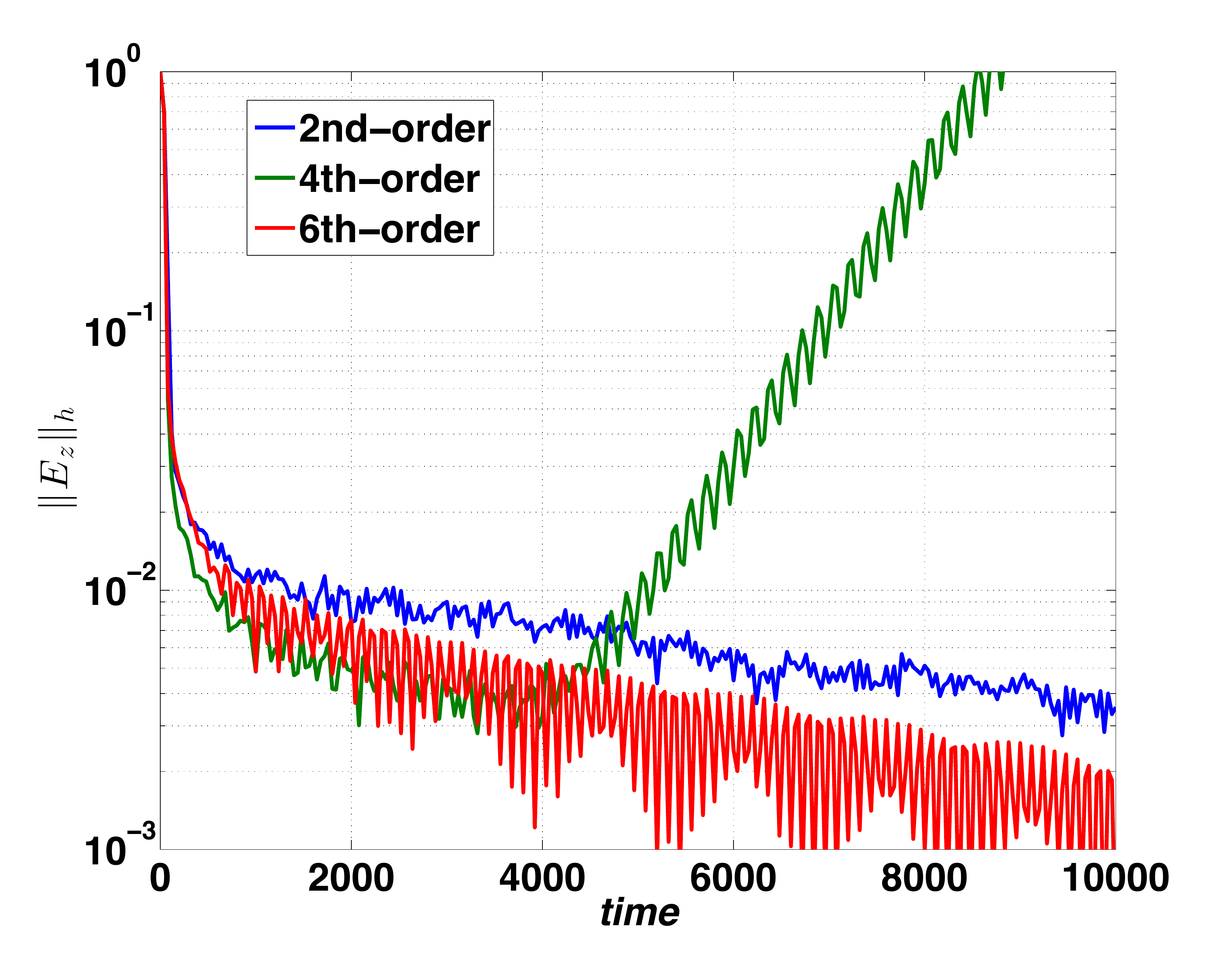}\label{fig:Model_Split_Field_2dt02h}}
\subfigure[No PML $(\sigma = 0)$.]{\includegraphics[width=0.49\linewidth]{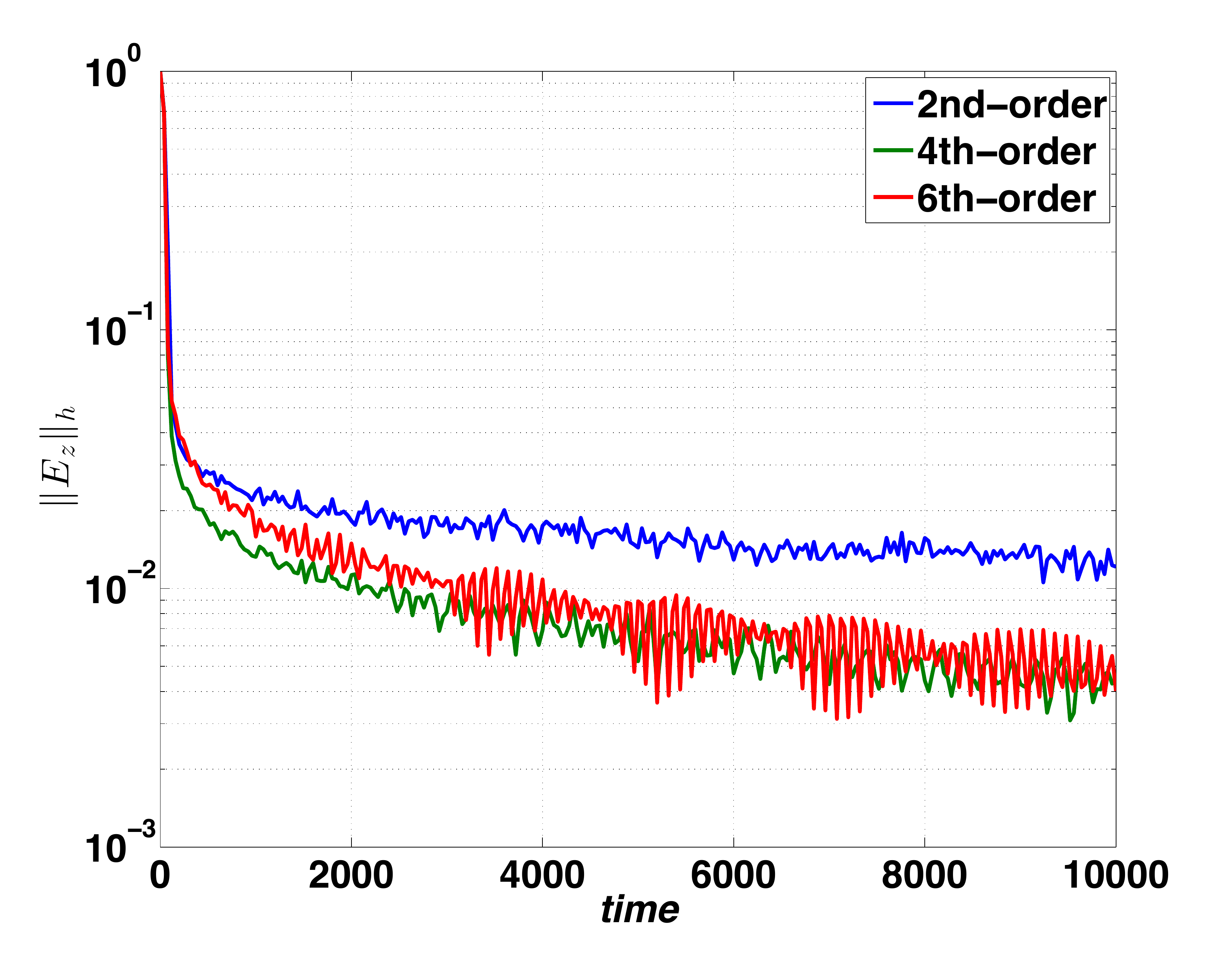}\label{fig:Model_Standard_2dt02h}}
 \caption{\textit{The $l_2$-norm of the electric field as a function of time with the time-step $dt = 0.2h$}}
 \label{fig:L2Norm_waveguide_2dt02h}
\end{figure}

  In the next section, we will show that the IBVPs, \eqref{eq:Maxwell_PML_Split_11}--\eqref{eq:Maxwell_PML_Split_14}, \eqref{eq:Maxwell_PML_WaveGuide_1}--\eqref{eq:Maxwell_PML_WaveGuide_4}, \eqref{eq:Maxwell_PML_Ababarnel_1}--\eqref{eq:Maxwell_PML_Ababarnel_4} and \eqref{eq:Maxwell_PML_Split_21}--\eqref{eq:Maxwell_PML_Split_24} with  \eqref{eq:wall_bc_y_0}, \eqref{eq:wall_bc_x}  are also stable. To enable a systematic development of stable  numerical boundary procedures we will derive continuous  energy estimates. In section 5, by mimicking the continuous  energy estimates we will design accurate and stable numerical boundary treatments for the PMLs,  \eqref{eq:Maxwell_PML_Split_11}--\eqref{eq:Maxwell_PML_Split_14}, \eqref{eq:Maxwell_PML_WaveGuide_1}--\eqref{eq:Maxwell_PML_WaveGuide_4}, \eqref{eq:Maxwell_PML_Ababarnel_1}--\eqref{eq:Maxwell_PML_Ababarnel_4} and \eqref{eq:Maxwell_PML_Split_21}--\eqref{eq:Maxwell_PML_Split_24}, subject to the boundary conditions \eqref{eq:wall_bc_y}, \eqref{eq:wall_bc_x}.

\section{Continuous analysis}
The analysis in \cite{DuKrApnum2013} shows that the constant coefficient Cauchy PML problem \eqref{eq:Maxwell_PML_WaveGuide_1}--\eqref{eq:Maxwell_PML_WaveGuide_4} is asymptotically stable. Since the continuous PML models  \eqref{eq:Maxwell_PML_Split_11}--\eqref{eq:Maxwell_PML_Split_14}, \eqref{eq:Maxwell_PML_WaveGuide_1}--\eqref{eq:Maxwell_PML_WaveGuide_4}, \eqref{eq:Maxwell_PML_Ababarnel_1}--\eqref{eq:Maxwell_PML_Ababarnel_4} and \eqref{eq:Maxwell_PML_Split_21}--\eqref{eq:Maxwell_PML_Split_24} are equivalent, the results in \cite{DuKrApnum2013} for the Cauchy problem hold also for the PML  models
\eqref{eq:Maxwell_PML_Split_11}--\eqref{eq:Maxwell_PML_Split_14},  \eqref{eq:Maxwell_PML_Ababarnel_1}--\eqref{eq:Maxwell_PML_Ababarnel_4} and \eqref{eq:Maxwell_PML_Split_21}--\eqref{eq:Maxwell_PML_Split_24}.
 Note that the boundary condition \eqref{eq:wall_bc_y_0} extends into the PML. In addition the PML is terminated in the $x$--direction by the boundary condition \eqref{eq:wall_bc_x}. 
The stability analysis  of  \eqref{eq:Maxwell_PML_Split_11}--\eqref{eq:Maxwell_PML_Split_14}, \eqref{eq:Maxwell_PML_WaveGuide_1}--\eqref{eq:Maxwell_PML_WaveGuide_4}, \eqref{eq:Maxwell_PML_Ababarnel_1}--\eqref{eq:Maxwell_PML_Ababarnel_4} or
\eqref{eq:Maxwell_PML_Split_21}--\eqref{eq:Maxwell_PML_Split_24} subject to the boundary conditions \eqref{eq:wall_bc_y_0} and 
\eqref{eq:wall_bc_x}  is not as straightforward. Here,  we  prove that  the PML  \eqref{eq:Maxwell_PML_Split_11}--\eqref{eq:Maxwell_PML_Split_14}, \eqref{eq:Maxwell_PML_WaveGuide_1}--\eqref{eq:Maxwell_PML_WaveGuide_4}, \eqref{eq:Maxwell_PML_Ababarnel_1}--\eqref{eq:Maxwell_PML_Ababarnel_4} or \eqref{eq:Maxwell_PML_Split_21}--\eqref{eq:Maxwell_PML_Split_24} with the boundary conditions \eqref{eq:wall_bc_y_0}, \eqref{eq:wall_bc_x}  is stable. First, we will use a modal analysis for  a constant coefficient PML. Second, we will derive energy estimates for the variable coefficients case.
\subsection{Normal mode analysis}\label{section:modal_analysis}
To perform a modal analysis, we assume constant coefficients. We split the PML problem into three: 1) a Cauchy PML problem with no boundary conditions, 2)  a lower half--plane PML problem, $-\infty < x < \infty, -\infty < y \le y_0$, with the boundary condition  \eqref{eq:wall_bc_y_0} at $y = y_0$, and 3) a left half-plane PML problem, $-\infty < x \le x_0+\delta, -\infty < y <\infty$, with the  boundary condition  \eqref{eq:wall_bc_x} at $x = x_0+\delta$. Each of the three PML problems above can be analyzed separately. We note that the lower half--plane PML problem and the left half--plane PML problem are distinct and must be analyzed severally.

The Cauchy problem can be analyzed, as in  \cite{DuKrApnum2013}, using standard Fourier methods.
The analysis in \cite{DuKrApnum2013} shows that the constant coefficient Cauchy PML problem  \eqref{eq:Maxwell_PML_Split_11}--\eqref{eq:Maxwell_PML_Split_14}, \eqref{eq:Maxwell_PML_WaveGuide_1}--\eqref{eq:Maxwell_PML_WaveGuide_4}, \eqref{eq:Maxwell_PML_Ababarnel_1}--\eqref{eq:Maxwell_PML_Ababarnel_4} or \eqref{eq:Maxwell_PML_Split_21}--\eqref{eq:Maxwell_PML_Split_24}  is asymptotically stable. As before we have 
\begin{theorem}\label{theo:cauchy_problem}
  The constant coefficient  Cauchy PML problem   \eqref{eq:Maxwell_PML_Split_11}--\eqref{eq:Maxwell_PML_Split_14}, \eqref{eq:Maxwell_PML_WaveGuide_1}--\eqref{eq:Maxwell_PML_WaveGuide_4}, \eqref{eq:Maxwell_PML_Ababarnel_1}--\eqref{eq:Maxwell_PML_Ababarnel_4} or \eqref{eq:Maxwell_PML_Split_21}--\eqref{eq:Maxwell_PML_Split_24}  with  $\sigma\ge 0$ is asymptotically stable.
\end{theorem}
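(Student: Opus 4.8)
The plan is to exploit the spatial homogeneity of the constant coefficient Cauchy problem and reduce the system to a family of decoupled linear ODEs indexed by the real spatial wave numbers. First, since the four formulations are related by the invertible, time--independent changes of auxiliary variable derived in Section 3, asymptotic stability of any one of them transfers to all, so I would fix the unsplit modal PML \eqref{eq:Maxwell_PML_WaveGuide_1}--\eqref{eq:Maxwell_PML_WaveGuide_4} as the representative. With $\sigma$ constant I would Fourier transform in $x$ and $y$, writing each field as $\widehat{u}(t)\,e^{i\left(\omega_1 x + \omega_2 y\right)}$ with $\omega_1,\omega_2\in\mathbb{R}$. The coefficients do not depend on $(x,y)$, so the modes decouple and the system becomes $\frac{d}{dt}\widehat{U} = M\left(\omega_1,\omega_2\right)\widehat{U}$ with $\widehat{U} = \left(\widehat{E}_z,\widehat{H}_y,\widehat{H}_x,\widehat{H}^{*}_x\right)^T$ and $M$ a constant $4\times 4$ symbol. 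Asymptotic stability then reduces to showing that $e^{Mt}$ stays bounded as $t\to\infty$ for every real pair $\left(\omega_1,\omega_2\right)$, with genuine decay of the damped modes when $\sigma>0$.

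Second, applying a Laplace transform in time I would pass to the dispersion relation $\det\left(sI - M\right)=0$ and eliminate the magnetic and auxiliary components in favour of $\widehat{E}_z$. I expect this to produce a real quartic of the form
\[
s^4 + 2\sigma s^3 + \left(\sigma^2 + \omega_1^2 + \omega_2^2\right)s^2 + 2\sigma\omega_2^2 s + \sigma^2\omega_2^2 = 0,
\]
which conveniently factors as $s^2\left(s+\sigma\right)^2 + \omega_1^2 s^2 + \omega_2^2\left(s+\sigma\right)^2$, exposing the complex--frequency--shift structure $s\mapsto s+\sigma$ of the layer. With $\sigma>0$ all coefficients are nonnegative, and a Routh--Hurwitz argument shows $\Re s\le 0$ for every real $\left(\omega_1,\omega_2\right)$; the Hurwitz determinants are strictly positive precisely when $\omega_1\omega_2\neq 0$, so no root crosses into $\Re s>0$. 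That the model satisfies the geometric stability condition \cite{Becache2003} guarantees, in addition, that no growth is introduced through the high--frequency limit.

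Finally, the main obstacle is the behaviour of the roots on the imaginary axis, which occurs exactly at the degenerate wave numbers $\omega_1=0$ or $\omega_2=0$ where the third Hurwitz determinant vanishes. At such modes the quartic factors (for instance $\left(s^2+\omega_2^2\right)\left(s+\sigma\right)^2$ when $\omega_1=0$, and $s^2\left(\left(s+\sigma\right)^2+\omega_1^2\right)$ when $\omega_2=0$), so roots sit on the imaginary axis or at the origin. As recalled in the introduction, late--time growth of PMLs is tied precisely to coalescing eigenvalues together with degenerate eigenvectors of the undifferentiated terms; to rule out secular (polynomially growing) solutions I must therefore verify that at these degenerate wave numbers the symbol $M$ remains diagonalizable, i.e. its geometric and algebraic multiplicities agree and no nontrivial Jordan block forms along the imaginary axis. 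I would establish this by computing the eigenspaces explicitly at $\omega_1=0$, $\omega_2=0$, and $\omega_1=\omega_2=0$ (where the double eigenvalue $s=0$ turns out to possess a two--dimensional eigenspace) and by a continuity argument for neighbouring modes, yielding the uniform bound on $e^{Mt}$ that proves the asymptotic stability asserted in Theorem \ref{theo:cauchy_problem}.
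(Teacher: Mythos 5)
Your proposal is correct and follows essentially the same route as the paper, which does not spell out a proof but defers to the ``standard Fourier methods'' of \cite{DuKrApnum2013} together with the equivalence of the four formulations established in its Section~3; your reconstruction via the Fourier symbol $M(\omega_1,\omega_2)$ is exactly that argument. I verified that eliminating $\widehat{H}_y,\widehat{H}_x,\widehat{H}_x^{*}$ from the transformed modal PML indeed yields $s^2(s+\sigma)^2+\omega_1^2 s^2+\omega_2^2(s+\sigma)^2=0$, that the Routh--Hurwitz determinants are as you state (the third equals $4\sigma^2\omega_1^2\omega_2^2$), and that at the degenerate wave numbers the symbol remains diagonalizable for $\sigma>0$, so your treatment of the imaginary-axis roots closes the argument.
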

\subsubsection{The lower half--plane PML problem: $-\infty < x < \infty, -\infty < y \le y_0$}
Consider now  the constant coefficient PML  \eqref{eq:Maxwell_PML_Split_11}--\eqref{eq:Maxwell_PML_Split_14}, \eqref{eq:Maxwell_PML_WaveGuide_1}--\eqref{eq:Maxwell_PML_WaveGuide_4}, \eqref{eq:Maxwell_PML_Ababarnel_1}--\eqref{eq:Maxwell_PML_Ababarnel_4} or \eqref{eq:Maxwell_PML_Split_21}--\eqref{eq:Maxwell_PML_Split_24} in a lower half--plane, $-\infty < x < \infty, -\infty < y \le y_0$, with the boundary condition \eqref{eq:wall_bc_y_0} at $y = y_0$. 
  Note that  the relevant ansatz is no longer a pure Fourier mode, but a simple wave solution on the form
\begin{equation}\label{eg:Ansatz2} 
\textbf{V} = \widehat{\textbf{V}}(y)e^{s t -ik_xx },  \quad |\widehat{\textbf{V}}(y)|< \infty.
\end{equation}
In order to prove the stability of the PML \eqref{eq:Maxwell_PML_Split_11}--\eqref{eq:Maxwell_PML_Split_14}, \eqref{eq:Maxwell_PML_WaveGuide_1}--\eqref{eq:Maxwell_PML_WaveGuide_4}, \eqref{eq:Maxwell_PML_Ababarnel_1}--\eqref{eq:Maxwell_PML_Ababarnel_4} or \eqref{eq:Maxwell_PML_Split_21}--\eqref{eq:Maxwell_PML_Split_24}  subject to the boundary condition \eqref{eq:wall_bc_y_0}, we will show that the only solution of the form       \eqref{eg:Ansatz2}  with $\Re{s} \ge   0$ for any $\sigma > 0$ is the trivial  solution, $\textbf{V}  = 0$.

 To begin with, we introduce the complex number $z = a + ib$ and define  the branch cut of $\sqrt{z}$ by
\begin{align}
-\pi < \arg{\left(a + ib\right)} \le \pi, \quad \arg{\sqrt{a + ib}} = \frac{1}{2}\arg{{\left(a + ib\right)}}.
\end{align}
Next, we introduce \eqref{eg:Ansatz2}  in  the PML  \eqref{eq:Maxwell_PML_Split_11}--\eqref{eq:Maxwell_PML_Split_14}, \eqref{eq:Maxwell_PML_WaveGuide_1}--\eqref{eq:Maxwell_PML_WaveGuide_4}, \eqref{eq:Maxwell_PML_Ababarnel_1}--\eqref{eq:Maxwell_PML_Ababarnel_4} or \eqref{eq:Maxwell_PML_Split_21}--\eqref{eq:Maxwell_PML_Split_24},   and the boundary condition \eqref{eq:wall_bc_y_0}.
We can eliminate the magnetic fields and the auxiliary variable,  then  we obtain a second order ODE,
 \begin{equation}\label{eq:wave_laplace_t_fourier_x}
\begin{split}
\left({s^2} +\left(\frac{k_x}{1+\frac{\sigma}{s}}\right)^2\right) \widehat{ E}_z = &\frac{d^2 \widehat{ E}_z}{d y^2},
\end{split}
\end{equation}
subject to the boundary condition
\begin{equation}\label{eq:wall_bc_y_Laplace}
 \widehat{H}_x + \gamma_y  \widehat{ E}_z= 0 \iff \frac{1}{s}\left(\frac{d \widehat{ E}_z}{d y}  + \gamma_y s \widehat{ E}_z\right)= 0,\quad \text{with} \quad\gamma_y \ge 0, \quad \text{at} \quad y =  y_0.
\end{equation}
or the PEC boundary condition
\begin{equation}\label{eq:pec_bc_y_Laplace}
  \widehat{ E}_z= 0 , \quad \text{at} \quad y =  y_0.
\end{equation}
Consider now $\Re{s} > 0 $, we can construct modal solutions, for \eqref{eq:wave_laplace_t_fourier_x}, of the form 
\begin{equation}\label{modal_solution_y}
\widehat{ E}_z = a_0e^{\kappa y} + b_0e^{-\kappa y}, 
\end{equation} 
where 
\[
\kappa = \sqrt{s^2 + \left(\frac{1}{1+\frac{\sigma}{s}}k_x\right)^2}, \quad \Re{s} > 0,
\]
and $a_0, b_0$ are parameters which must not be zero at the same time.
Note that it can be shown that $\Re{\kappa} > 0$ for all $\sigma, \Re{s} > 0$, see lemma \ref{Lem:Duru1} in the appendix. Thus, boundedness of the solutions at $y \to -\infty$ implies $b_0 = 0$. The parameter $a_0$ is determined by the boundary condition \eqref{eq:wall_bc_y_Laplace}. Introducing \eqref{modal_solution_y} in  \eqref{eq:wall_bc_y_Laplace} we obtain 
\begin{equation}\label{eq:wall_bc_y_Laplace_condition}
   \frac{a_0}{s+\sigma}\left( \sqrt{\left(s+\sigma\right)^2 + k_x^2}  + \gamma_y \left(s+\sigma\right)\right)= 0.
\end{equation}
Since $a_0 \ne 0$, we have nontrivial solutions only if 
\begin{equation}\label{eq:wall_bc_y_condition}
   \mathcal{F}_1\left(s+\sigma, k_x\right)\equiv\frac{1}{s+\sigma}\left(\sqrt{\left(s+\sigma\right)^2 + k_x^2}  + \gamma_y \left(s+\sigma\right)\right)= 0.
\end{equation}
The following lemma characterizes the roots of the dispersion relation \eqref{eq:wall_bc_y_condition}.
\begin{lemma}\label{lem:lower_half_plane}
Consider the dispersion relation $\mathcal{F}_1\left(s+\sigma, k_x\right)$ defined in \eqref{eq:wall_bc_y_condition} with $\gamma_y \ge 0.$  The equation $\mathcal{F}_1\left(s+\sigma, k_x\right) = 0$ has no solution for all $\Re{s} \ge 0$, $k_x \in \mathbb{R}$ and $\sigma > 0.$
\end{lemma}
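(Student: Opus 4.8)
The plan is to exploit the fact that the shift by $\sigma$ pushes the relevant spectral variable strictly into the open right half--plane, and then to read off a contradiction directly from the real parts of the two sides of the dispersion relation, using only the prescribed principal branch of the square root rather than squaring (which would admit spurious roots coming from the wrong branch). First I would set $\tilde s := s + \sigma$ and note that, since $\Re{s} \ge 0$ and $\sigma > 0$, we have $\Re{\tilde s} = \Re{s} + \sigma \ge \sigma > 0$; thus $\tilde s$ lies in the open right half--plane for every admissible triple $(s, k_x, \sigma)$, and in particular $\tilde s \ne 0$. Multiplying $\mathcal{F}_1(s+\sigma, k_x) = 0$ by the nonzero factor $\tilde s$ then shows the equation is equivalent to $\sqrt{\tilde s^2 + k_x^2} = -\gamma_y \tilde s$.

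The key estimate is that the branch cut fixed just before the statement forces $\Re{\sqrt{z}} \ge 0$ for every $z$: because $\arg{\sqrt{z}} = \tfrac{1}{2}\arg{z} \in (-\pi/2, \pi/2]$, the real part of any such square root is nonnegative. Hence $\Re{\sqrt{\tilde s^2 + k_x^2}} \ge 0$. On the other hand, $\Re{(-\gamma_y \tilde s)} = -\gamma_y \Re{\tilde s} \le 0$, since $\gamma_y \ge 0$ and $\Re{\tilde s} > 0$. For the two sides of the equation to agree, both real parts must therefore vanish simultaneously; in particular $\gamma_y \Re{\tilde s} = 0$, which forces $\gamma_y = 0$ because $\Re{\tilde s} > 0$.

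It then remains to dispose of the degenerate case $\gamma_y = 0$ that survives the real-part argument. There the equation collapses to $\sqrt{\tilde s^2 + k_x^2} = 0$, so $\tilde s^2 = -k_x^2$, a non-positive real number; consequently $\tilde s$ is purely imaginary, contradicting $\Re{\tilde s} > 0$. This closes the argument and shows that $\mathcal{F}_1 = 0$ has no root with $\Re{s} \ge 0$, $k_x \in \mathbb{R}$, $\sigma > 0$.

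I expect the only delicate point to be the careful use of the branch-cut convention to guarantee $\Re{\sqrt{z}} \ge 0$; this is essentially the content of lemma \ref{Lem:Duru1}, which already records $\Re{\kappa} > 0$ and transfers to the present setting through the identity $\kappa = \tfrac{s}{s+\sigma}\sqrt{(s+\sigma)^2 + k_x^2}$. Arguing with real parts rather than squaring the relation is precisely what prevents the conjugate branch $\sqrt{\tilde s^2 + k_x^2} = +\gamma_y \tilde s$ from contributing spurious solutions, so no separate verification of discarded roots is needed.
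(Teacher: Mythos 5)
Your proof is correct, but it takes a genuinely different route from the paper's. You argue directly: after multiplying by the nonzero factor $s+\sigma$ you reduce $\mathcal{F}_1\left(s+\sigma,k_x\right)=0$ to $\sqrt{\left(s+\sigma\right)^2+k_x^2}=-\gamma_y\left(s+\sigma\right)$, use the prescribed branch to get $\Re{\sqrt{z}}\ge 0$ while the right-hand side has nonpositive real part because $\Re{\left(s+\sigma\right)}\ge\sigma>0$ and $\gamma_y\ge 0$, and then dispose of the surviving case $\gamma_y=0$ by noting that $\left(s+\sigma\right)^2=-k_x^2$ would force $s+\sigma$ onto the imaginary axis. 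The paper instead argues indirectly: it asserts that every root $s_0$ of $\mathcal{F}_1\left(s,k_x\right)=0$ in the undamped case $\sigma=0$ must satisfy $\Re{s_0}\le 0$, since a root with positive real part would produce a growing mode contradicting the energy estimate \eqref{eq:strict}; it checks that $s_0=0$ is not a root; and it then exploits the fact that $\mathcal{F}_1$ depends on $s$ only through the combination $s+\sigma$, so the roots for $\sigma>0$ are exactly the translates $s=-\sigma+s_0$, all strictly in the left half-plane. Your version is more self-contained and explicit: it needs nothing beyond the branch-cut convention (or lemma \ref{Lem:Heinz_Anders} applied with $s$ replaced by $s+\sigma$, which even yields the strict inequality $\Re{\sqrt{\left(s+\sigma\right)^2+k_x^2}}>0$ and would let you skip the $\gamma_y=0$ case altogether), and it handles the boundary $\Re{s}=0$ without any appeal to well-posedness of the undamped problem. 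The paper's argument is shorter but leans on the a priori energy estimate for the half-plane problem and on the translation structure of the dispersion relation. Both establish the claim.
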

\begin{proof}
\newline
\newline
Consider first $\sigma = 0$,  we have $\mathcal{F}_1\left(s,  k_x\right) = 0$, with the solution $s = s_0 \in \mathbb{C}$. Note that since  the Maxwell's equation \eqref{eq:Maxwell} subject to the boundary conditions  \eqref{eq:wall_bc_y} satisfy the energy estimate \eqref{eq:strict} we must have $\Re{s_0}\le 0$, for all $\gamma_y \ge 0$. Otherwise if $\Re{s_0}> 0$, then the energy will grow for some solutions, which is in contradiction with the energy estimate \eqref{eq:strict}.  By inspection, the zero root $s_0 = 0$ is also not a solution of $\mathcal{F}_1\left(s,  k_x\right) = 0.$ 

 If $\sigma > 0$, then the equation $\mathcal{F}_1\left(s+\sigma, k_x\right) = 0$  has the solution $s  = -\sigma + s_0.$ Therefore, any $\sigma > 0$ will move all roots $s$ further into the stable complex plane.
\end{proof}

By lemma \ref{lem:lower_half_plane} we must have $\mathcal{F}_1\left(s+\sigma,  k_x\right) \ne 0$ for all $\sigma > 0$, $\Re{s}  \ge 0$. This also  implies that  $a_0 \equiv 0$ in \eqref{modal_solution_y} for all $\sigma, \Re{s}  > 0$. Therefore,   the trivial solution is the only possible solution of \eqref{eq:Maxwell_PML_WaveGuide_1}--\eqref{eq:Maxwell_PML_WaveGuide_4} with \eqref{eq:wall_bc_y} or \eqref{eq:Maxwell_PML_Ababarnel_1}--\eqref{eq:Maxwell_PML_Ababarnel_4} with \eqref{eq:wall_bc_y}, of  the form       \eqref{eg:Ansatz2}  with $\Re{s} \ge  0$.

Consider now the PEC boundary condition \eqref{eq:pec_bc_y_Laplace} and substitute \eqref{modal_solution_y}, with $b_0 \equiv 0$, we have
\[
a_0 = 0.
\]
Thus the PEC boundary condition \eqref{eq:pec_bc_y_Laplace}  supports only trivial solutions, $\textbf{V} = 0$.
 
\subsubsection{The left half--plane problem: $-\infty < x \le x_0+\delta, -\infty < y < \infty $}
Consider now  the constant coefficient PML \eqref{eq:Maxwell_PML_Split_11}--\eqref{eq:Maxwell_PML_Split_14}, \eqref{eq:Maxwell_PML_WaveGuide_1}--\eqref{eq:Maxwell_PML_WaveGuide_4}, \eqref{eq:Maxwell_PML_Ababarnel_1}--\eqref{eq:Maxwell_PML_Ababarnel_4} or \eqref{eq:Maxwell_PML_Split_21}--\eqref{eq:Maxwell_PML_Split_24} in the left half--plane, $-\infty < x \le x_0+\delta,$ $ -\infty < y < \infty $, with the boundary condition \eqref{eq:wall_bc_x} at $x = x_0 +\delta$. 
  We make the ansatz
\begin{equation}\label{eg:Ansatz_x} 
\textbf{V} = \widehat{\textbf{V}}(x)e^{s t -ik_yy },  \quad |\widehat{\textbf{V}}(x)|< \infty.
\end{equation}
In order to prove the stability of the PML, \eqref{eq:Maxwell_PML_Split_11}--\eqref{eq:Maxwell_PML_Split_14}, \eqref{eq:Maxwell_PML_WaveGuide_1}--\eqref{eq:Maxwell_PML_WaveGuide_4}, \eqref{eq:Maxwell_PML_Ababarnel_1}--\eqref{eq:Maxwell_PML_Ababarnel_4} or \eqref{eq:Maxwell_PML_Split_21}--\eqref{eq:Maxwell_PML_Split_24},   subject to the boundary condition \eqref{eq:wall_bc_x}, we will show that there are no nontrivial  solutions on the form           
 \eqref{eg:Ansatz_x}  with $\Re{s} >  0$ for any $\sigma \ge 0$. 
 We insert  \eqref{eg:Ansatz_x} in  \eqref{eq:Maxwell_PML_Split_11}--\eqref{eq:Maxwell_PML_Split_14}, \eqref{eq:Maxwell_PML_WaveGuide_1}--\eqref{eq:Maxwell_PML_WaveGuide_4}, \eqref{eq:Maxwell_PML_Ababarnel_1}--\eqref{eq:Maxwell_PML_Ababarnel_4} or \eqref{eq:Maxwell_PML_Split_21}--\eqref{eq:Maxwell_PML_Split_24} and in  the boundary condition \eqref{eq:wall_bc_x}.
As before, we  eliminate the magnetic fields and the auxiliary variable to  obtain a second order ordinary differential equation,
 \begin{equation}\label{eq:wave_laplace_t_fourier_y}
\begin{split}
\left({s^2} +k_y^2\right) \widehat{ E}_z = &\left(\frac{1}{1+\frac{\sigma}{s}}\right)^2\frac{d^2 \widehat{ E}_z}{d x^2},
\end{split}
\end{equation}
subject to the boundary conditions
\begin{equation}\label{eq:wall_bc_x_Laplace}
 \widehat{H}_y -   \gamma_x\widehat{ E}_z= 0 \iff \frac{1}{s}\left(\left(\frac{1}{1+\frac{\sigma}{s}}\right)\frac{d \widehat{ E}_z}{d x}  +  s \gamma_x\widehat{ E}_z\right)= 0, \quad  \text{with} \quad\gamma_x \ge 0, \quad \text{at} \quad x =  x_0+\delta,
\end{equation}
or the PEC boundary condition
\begin{equation}\label{eq:pec_bc_x_Laplace}
  \widehat{ E}_z= 0 , \quad \text{at} \quad x =  x_0+\delta.
\end{equation}
Consider now $\Re{s} > 0 $, we can construct modal solutions, for \eqref{eq:wave_laplace_t_fourier_y}, on the form 
\begin{equation}\label{modal_solution_x}
\widehat{ E}_z = a_0e^{\kappa x} + b_0e^{-\kappa x}, 
\end{equation} 
where 
\[
\kappa = \left({1 + \frac{\sigma}{s}}\right)\sqrt{ s^2+ k_y^2}, \quad \Re{s} > 0,
\]
and $a_0, b_0$ are parameters which must not be zero at the same time.
Note that it can be shown that $\Re{\kappa} > 0$ for all $\sigma, \Re{s} > 0$, see lemma \ref{Lem:Duru2} in the appendix. Thus, boundedness of the solution \eqref{modal_solution_x} at $x \to -\infty$ implies $b_0 = 0$. The parameter $a_0$ is determined by the boundary condition \eqref{eq:wall_bc_x_Laplace}. Introducing \eqref{modal_solution_x} in  \eqref{eq:wall_bc_x_Laplace} we obtain 
\begin{equation}\label{eq:wall_bc_x_Laplace_condition}
   \frac{a_0}{s}\left( \sqrt{s^2 + k_y^2}  +  s \right)= 0.
\end{equation}
Since $a_0 \ne 0$, we have nontrivial solutions only if 
\begin{equation}\label{eq:wall_bc_x_condition}
   \mathcal{F}_2\left(s, k_y\right)\equiv \frac{1}{s}\left(\sqrt{s^2 + k_y^2}  +  \gamma_x s\right) = 0.
\end{equation}
It is particularly important to note that the dispersion relation $\mathcal{F}_2\left(s, k_y\right)$,  defined by \eqref{eq:wall_bc_x_condition}, is independent of the PML parameter $\sigma\ge 0$.  The dispersion relation \eqref{eq:wall_bc_x_condition} corresponds to that of  the undamped problem, and it is not perturbed by the PML. This is  unlike the dispersion relation, $\mathcal{F}_1\left(s+\sigma, k_y\right)$ for the lower half--plane  PML problem,  which depends  on the PML parameter $\sigma\ge 0$.

Since, the Maxwell's equation \eqref{eq:Maxwell} subject to the boundary conditions  \eqref{eq:wall_bc_x} and \eqref{eq:wall_bc_y} satisfy the energy estimate \eqref{eq:strict}, the following lemma characterizing the roots of the dispersion relation \eqref{eq:wall_bc_x_condition}  holds
\begin{lemma}\label{lem:left_half_plane}
Consider the dispersion relation $\mathcal{F}_2\left(s, k_y\right)$ defined in \eqref{eq:wall_bc_x_condition} with $\gamma_x > 0$.  The equation $\mathcal{F}_2\left(s, k_y\right) = 0$ has no solution $s$  for all  $\Re{s} \ge 0$ and $ k_y \in \mathbb{R}$.
\end{lemma}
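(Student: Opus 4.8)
The plan is to exploit the fact, already stressed in the text, that the dispersion function $\mathcal{F}_2(s,k_y)$ carries no dependence on the damping $\sigma$. Hence the zeros of $\mathcal{F}_2$ coincide exactly with the modal frequencies of the \emph{undamped} left half--plane Maxwell problem, and it suffices to show that this undamped problem admits no admissible mode of the form \eqref{eg:Ansatz_x} with $\Re{s}\ge 0$. Unlike in Lemma \ref{lem:lower_half_plane}, there is no $\sigma$--shift available to push the roots strictly into $\Re{s}<0$, so the argument must control the closed right half--plane directly. I expect the proof to split into the open half--plane $\Re{s}>0$, handled by a short sign argument, and the imaginary axis $\Re{s}=0$, which is the genuinely delicate part.

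For $\Re{s}>0$ I would argue as follows. Solving $\mathcal{F}_2(s,k_y)=0$ is equivalent to $\sqrt{s^2+k_y^2}=-\gamma_x s$. By the branch convention $-\pi<\arg z\le\pi$, $\arg\sqrt z=\tfrac12\arg z$, the principal square root always satisfies $\Re\sqrt z\ge 0$, so $\Re\sqrt{s^2+k_y^2}\ge 0$. On the other hand $\Re(-\gamma_x s)=-\gamma_x\Re{s}<0$ because $\gamma_x>0$ and $\Re{s}>0$. Matching real parts is therefore impossible, and $\mathcal{F}_2$ has no zero with $\Re{s}>0$. This is consistent with, and can equivalently be read off from, the energy estimate \eqref{eq:strict}: a mode growing like $e^{st}$ with $\Re{s}>0$ would contradict the time boundedness the estimate provides for the undamped equations subject to \eqref{eq:wall_bc_x}.

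The main obstacle is the axis $\Re{s}=0$, where the sign argument degenerates because both sides of $\sqrt{s^2+k_y^2}=-\gamma_x s$ become purely imaginary. Writing $s=ib$ with $b\in\mathbb{R}$ gives $s^2+k_y^2=k_y^2-b^2\in\mathbb{R}$: when $k_y^2>b^2$ the left side is a nonzero positive real and cannot equal the purely imaginary $-i\gamma_x b$, while when $k_y^2\le b^2$ the square root equals $i\sqrt{b^2-k_y^2}$ and one is left with the reduced relation $\sqrt{b^2-k_y^2}=-\gamma_x b$, which does admit formal solutions (glancing modes) when $0<\gamma_x\le 1$. The crucial point is that the modal ansatz retains $a_0e^{\kappa x}$ only under the boundedness requirement $\Re{\kappa}>0$, with $\kappa=(1+\sigma/s)\sqrt{s^2+k_y^2}$ (Lemma \ref{Lem:Duru2}); a direct computation shows that precisely these formal imaginary--axis roots yield $\Re{\kappa}=-\sigma\gamma_x<0$, so for $\sigma>0$ the function $a_0e^{\kappa x}$ is unbounded as $x\to-\infty$ and cannot be an admissible mode, forcing $a_0\equiv 0$. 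As corroboration one may also note that, since $\gamma_x>0$, the boundary dissipation term $2\gamma_x\int_0^t\|E_z\|^2_{\Gamma_y}\,dt'$ in \eqref{eq:strict} grows without bound for any neutral mode whose trace on $x=x_0+\delta$ does not vanish, which together with the conserved total energy forces that trace to be zero and, through \eqref{eq:wall_bc_x_Laplace}, leaves only the trivial solution. Reconciling these formal glancing roots with the boundedness condition is exactly the subtle step, and I would present it carefully before concluding that no admissible mode exists with $\Re{s}\ge 0$.
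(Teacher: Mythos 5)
Your argument reaches the right conclusion, but by a genuinely different route from the paper's, and in the process it exposes a real defect in the paper's own proof. For $\Re s>0$ you argue directly from the branch convention ($\Re\sqrt{s^2+k_y^2}\ge 0$ while $\Re(-\gamma_x s)<0$), where the paper instead appeals to the energy estimate \eqref{eq:strict}; both work, and yours is self-contained. The substantive difference is on the imaginary axis. The paper sets $s=i\xi$, rewrites the condition as $\sqrt{1-1/\bar\xi^{2}}+\gamma_x=0$ and declares it unsolvable; but that rewriting uses $\sqrt{s^2+k_y^2}\,/\,s=\sqrt{1+k_y^2/s^2}$, which fails by a sign for $s=ib$ with $b<0$ and $b^2>k_y^2$ under the stated branch cut. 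As you observe, $\mathcal{F}_2$ does vanish there: $s=ib$, $b<0$, $k_y^2=(1-\gamma_x^2)b^2$ is a root whenever $0<\gamma_x\le 1$ (for instance $\gamma_x=\tfrac12$, $s=-2i$, $k_y=\sqrt{3}$ gives $\sqrt{-1}+\tfrac12(-2i)=i-i=0$; and $k_y=0$, $s=ib$, $b<0$ works for $\gamma_x=1$, contradicting the paper's claim that $\mathcal{F}_2=1+\gamma_x$ when $k_y=0$). So the lemma as literally stated is false in that parameter range, and the paper's treatment of the imaginary-axis case is precisely the step that breaks. Your repair --- computing $\kappa=(1+\sigma/s)\sqrt{s^2+k_y^2}=-\gamma_x(s+\sigma)$ at such a root, hence $\Re\kappa=-\gamma_x\sigma<0$, so $a_0e^{\kappa x}$ violates the boundedness requirement in \eqref{eg:Ansatz_x} and the mode is inadmissible --- is exactly the argument needed to salvage the conclusion the lemma is actually used for, namely that the left half-plane PML problem supports no nontrivial bounded mode with $\Re s\ge 0$.

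Two points to tighten. First, dispose of $s=0$ explicitly: for $k_y\ne 0$ the bracket tends to $|k_y|\ne 0$, and $s=k_y=0$ carries no mode. Second, $\Re\kappa=-\gamma_x\sigma$ only bites for $\sigma>0$; at $\sigma=0$ the glancing root gives $\Re\kappa=0$, a bounded neutral mode, so covering the closed half-plane for the undamped problem genuinely requires the boundary-dissipation argument you sketch from \eqref{eq:strict} (or an explicit restriction to $\sigma>0$, which is all the PML analysis needs). With those caveats, your write-up is the more reliable of the two.
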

\begin{proof}
 Since  the Maxwell's equation \eqref{eq:Maxwell} with the boundary condition  \eqref{eq:wall_bc_x} at $x = x_0+\delta$ satisfy the energy estimate \eqref{eq:strict} we must have $\Re{s}\le 0$, for all $ k_y \in \mathbb{R}$. Otherwise if $\Re{s}> 0$, then the energy will grow for some solutions, which is also in contradiction with the energy estimate \eqref{eq:strict}.  Again by inspection, the zero root $s = 0$ is not a solution of $\mathcal{F}_2\left(s,  k_y\right) = 0.$ Note that if $k_y = 0$ we have $\mathcal{F}_2\left(s, k_y\right) = 1+\gamma_x \ne 0$.   Next, we consider the case $s = i\xi$, $\xi \in \mathbb{R}/\{0\}$, and introduce
$\bar{\xi} = \xi/k_y$
 \begin{equation}\label{eq:wall_bc_x_condition2}
   \mathcal{F}_2\left(s, k_y\right)\equiv \frac{1}{s}\left(\sqrt{s^2 + k_y^2}  +  \gamma_xs\right) = 0 \iff \mathcal{F}_0\left(\bar{\xi}\right)\equiv \sqrt{ 1 - \frac{1}{\bar{\xi^2}} } + \gamma_x = 0 .
\end{equation}
If $\bar{\xi^2} > 1$, then we have $\mathcal{F}_0\left(\bar{\xi}\right) \ne 0$ since the square root can never be negative. If $\bar{\xi^2} \le 1$, then we also have $\mathcal{F}_0\left(\bar{\xi}\right) \ne 0$ since the square root is zero or  purely imaginary.  Therefore, we must have $\Re{s}< 0$, for all $ k_y \in \mathbb{R}$. This completes the proof of the lemma.
\end{proof}

As before,  for $\Re{s} \ge  0$ the trivial solution is the only possible solution of \eqref{eq:Maxwell_PML_WaveGuide_1}--\eqref{eq:Maxwell_PML_WaveGuide_4} with \eqref{eq:wall_bc_x} or \eqref{eq:Maxwell_PML_Ababarnel_1}--\eqref{eq:Maxwell_PML_Ababarnel_4} with \eqref{eq:wall_bc_x},  on  the form       \eqref{eg:Ansatz_x}.

Consider now the PEC boundary condition \eqref{eq:pec_bc_y_Laplace} and substitute \eqref{modal_solution_x}, with $b_0 \equiv 0$, we have
\[
a_0 = 0.
\]
The PEC boundary condition \eqref{eq:pec_bc_x_Laplace}  also supports only trivial solutions, $\textbf{V} = 0$.

By theorem \ref{theo:cauchy_problem}, and lemmas \ref{lem:lower_half_plane} and \ref{lem:left_half_plane}, it then follows that all non--trivial modes in the PML decay. The difficulty lies in constructing  accurate and stable numerical approximations for the PML,  \eqref{eq:Maxwell_PML_Split_11}--\eqref{eq:Maxwell_PML_Split_14}, \eqref{eq:Maxwell_PML_WaveGuide_1}--\eqref{eq:Maxwell_PML_WaveGuide_4}, \eqref{eq:Maxwell_PML_Ababarnel_1}--\eqref{eq:Maxwell_PML_Ababarnel_4} or \eqref{eq:Maxwell_PML_Split_21}--\eqref{eq:Maxwell_PML_Split_24} subject to the boundary conditions  \eqref{eq:wall_bc_x} and \eqref{eq:wall_bc_y}, and ensuring numerical stability.

\subsection{Energy equation in the Laplace space}
Consider now  the constant coefficient PML  \eqref{eq:Maxwell_PML_Split_11}--\eqref{eq:Maxwell_PML_Split_14}, \eqref{eq:Maxwell_PML_WaveGuide_1}--\eqref{eq:Maxwell_PML_WaveGuide_4}, \eqref{eq:Maxwell_PML_Ababarnel_1}--\eqref{eq:Maxwell_PML_Ababarnel_4} or \eqref{eq:Maxwell_PML_Split_21}--\eqref{eq:Maxwell_PML_Split_24} in the rectangular domain, $-(x_0 +\delta) \le x \le (x_0 +\delta), -y_0 \le y \le y_0$, and the boundary conditions \eqref{eq:wall_bc_y_0}, \eqref{eq:wall_bc_x}.  We will prove that the corresponding constant coefficient IBVP can not support growing modes. 

To begin, we assume homogeneous initial data and take the Laplace transform of the PML equations and boundary conditions in time. We can eliminate the magnetic fields and all the auxiliary variables,
we have
 \begin{equation}\label{eq:pml_wave_laplace_t}
\begin{split}
{s^2} \widehat{ E}_z = &\frac{1}{S_x}\frac{\partial}{\partial x}\left(\frac{1}{S_x}\frac{\partial \widehat{ E}_z}{\partial x}\right) + \frac{\partial^2 \widehat{ E}_z}{\partial y^2}, \quad \Re{s} > 0,
\end{split}
\end{equation}
subject to the boundary conditions
\begin{equation}\label{eq:wall_bc_x_Laplace_t}
 \widehat{H}_y \mp   \gamma_x\widehat{ E}_z= 0 \iff \frac{1}{s}\left(\frac{1}{S_x}\frac{d \widehat{ E}_z}{d x}  \pm  \gamma_x s \widehat{ E}_z\right)= 0, \quad \text{at} \quad x = \pm( x_0+\delta),
\end{equation}
\begin{equation}\label{eq:wall_bc_y_Laplace_t}
 \widehat{H}_x \pm \gamma_x \widehat{ E}_z= 0 \iff \frac{1}{s}\left(\frac{d \widehat{ E}_z}{d y}  \pm  \gamma_x s \widehat{ E}_z\right)= 0, \quad \text{at} \quad y =  \pm y_0.
\end{equation}
or
\begin{equation}\label{eq:wall_bc_x_Laplace_pec}
\widehat{ E}_z= 0, \quad \text{at} \quad x = \pm( x_0+\delta),
\end{equation}
\begin{equation}\label{eq:wall_bc_y_Laplace_pec}
 \widehat{ E}_z= 0 , \quad \text{at} \quad y =  \pm y_0.
\end{equation}
We define the quantity
 \begin{align}\label{eq:pml_norm_laplace}
\widehat{\mathcal{E}}_{\sigma}(s) = & \Re{s}\|s\widehat{E}_z\|^2 + \Re{\left(\frac{(sS_x)^*}{S_x}\right)}\Big\|\frac{1}{S_x}\frac{\partial { \widehat{E}_z}}{\partial x} \Big\|^2  + \Re{s}\Big\|\frac{\partial { \widehat{E}_z}}{\partial y} \Big\|^2  
 +   \gamma_y\left(\|s\widehat{E}_z\left(y_0\right)\|^2_{\Gamma_x} + \|s\widehat{E}_z\left(-y_0\right)\|^2_{\Gamma_x}\right)  \notag \\
&+ \gamma_x\Re{\left(\frac{1}{S_x}\right)}  \left(\|s\widehat{E}_z\left(x_0+\delta\right)\|^2_{\Gamma_y} + \|s\widehat{E}_z\left(-x_0-\delta\right)\|^2_{\Gamma_y}\right).
 \end{align}
For the PEC boundary conditions \eqref{eq:wall_bc_x_Laplace_pec}, \eqref{eq:wall_bc_y_Laplace_pec}, the boundary terms vanish and we have
\begin{align}\label{eq:pml_norm_laplace_pec}
\widehat{\mathcal{E}}^0_{\sigma}(s) = & \Re{s}\|s\widehat{E}_z\|^2 + \Re{\left(\frac{(sS_x)^*}{S_x}\right)}\Big\|\frac{1}{S_x}\frac{\partial { \widehat{E}_z}}{\partial x} \Big\|^2  + \Re{s}\Big\|\frac{\partial { \widehat{E}_z}}{\partial y} \Big\|^2 .
 \end{align}
Note that  $\Re{s} > 0 \implies \Re{\left(\frac{(sS_x)^*}{S_x}\right)}, \Re{\left(\frac{1}{S_x}\right)}  > 0 $, see lemma \ref{Lem:Duru3} in the Appendix.
 Therefore, $\widehat{\mathcal{E}}_{\sigma}(s) > 0$ and  $\widehat{\mathcal{E}}^0_{\sigma}(s) > 0$ are  energies.  Note also that for $ \gamma_x \ge 0$, $ \gamma_y \ge 0$ we have
\[
\widehat{\mathcal{E}}_{\sigma}(s) = 0 \iff \widehat{E}_z = \frac{\partial { \widehat{E}_z}}{\partial x} = \frac{\partial { \widehat{E}_z}}{\partial y} = 0,
\]
and
\[
\widehat{\mathcal{E}}^0_{\sigma}(s) = 0 \iff \widehat{E}_z = \frac{\partial { \widehat{E}_z}}{\partial x} = \frac{\partial { \widehat{E}_z}}{\partial y} = 0.
\]
We can now prove the theorem:
\begin{theorem}\label{Theorem:Wellposedness_Laplace}
For any $s$, with $ \Re{s} > 0$,  and $\sigma > 0,$ all  solutions $\widehat{E}_z \ne 0$ of the constant coefficient PML  \eqref{eq:pml_wave_laplace_t} in the rectangular domain, with the boundary conditions \eqref{eq:wall_bc_x_Laplace_t}, \eqref{eq:wall_bc_y_Laplace_t}  or \eqref{eq:wall_bc_x_Laplace_pec}, \eqref{eq:wall_bc_y_Laplace_pec} must satisfy
\begin{equation}\label{eq:EnergyEstimate_continuous_Laplace}
\begin{split}
\widehat{\mathcal{E}}_{\sigma}(s) = 0, \quad \text{or} \quad \widehat{\mathcal{E}}^0_{\sigma}(s) = 0.
\end{split}
\end{equation}
\end{theorem}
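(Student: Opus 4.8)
The plan is to establish \eqref{eq:EnergyEstimate_continuous_Laplace} directly by the energy method applied to the second order equation \eqref{eq:pml_wave_laplace_t}. First I would multiply \eqref{eq:pml_wave_laplace_t} by the conjugate multiplier $(s\widehat{E}_z)^* = \bar{s}\,\widehat{E}_z^*$ and integrate over the rectangle $-(x_0+\delta)\le x\le x_0+\delta$, $-y_0\le y\le y_0$. The left-hand side immediately gives $\bar{s}s^2\|\widehat{E}_z\|^2 = s\,\|s\widehat{E}_z\|^2$, whose real part is precisely the leading term $\Re{s}\,\|s\widehat{E}_z\|^2$ of $\widehat{\mathcal{E}}_{\sigma}$.

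The core of the computation is integration by parts in each coordinate. Since the coefficients are constant, $S_x$ factors out of the $x$-derivative term; integrating $\int\bar{s}\,\widehat{E}_z^*\,\frac{1}{S_x}\partial_x\!\left(\frac{1}{S_x}\partial_x\widehat{E}_z\right)$ by parts produces a volume contribution $-\bar{s}\frac{|S_x|^2}{S_x^2}\|\frac{1}{S_x}\partial_x\widehat{E}_z\|^2$ together with boundary terms at $x=\pm(x_0+\delta)$, while the $y$-term yields $-\bar{s}\|\partial_y\widehat{E}_z\|^2$ and boundary terms at $y=\pm y_0$. The key algebraic identity is $\frac{|S_x|^2}{S_x^2}=\frac{S_x^*}{S_x}$, so that $\bar{s}\frac{|S_x|^2}{S_x^2}=\frac{(sS_x)^*}{S_x}$, which is exactly the coefficient appearing in \eqref{eq:pml_norm_laplace}. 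Taking the real part and invoking $\Re\left(\frac{(sS_x)^*}{S_x}\right)>0$ and $\Re\left(\frac{1}{S_x}\right)>0$ for $\Re{s}>0$ (lemma \ref{Lem:Duru3} in the appendix), the three interior terms of $\widehat{\mathcal{E}}_{\sigma}$ emerge with the correct signs.

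Next I would insert the boundary conditions to close the estimate. Substituting \eqref{eq:wall_bc_x_Laplace_t}, i.e.\ $\frac{1}{S_x}\partial_x\widehat{E}_z=\mp\gamma_x s\widehat{E}_z$ at $x=\pm(x_0+\delta)$, converts the $x$-boundary terms into $-\frac{\gamma_x}{S_x}\left(\|s\widehat{E}_z(x_0+\delta)\|_{\Gamma_y}^2+\|s\widehat{E}_z(-x_0-\delta)\|_{\Gamma_y}^2\right)$, and likewise \eqref{eq:wall_bc_y_Laplace_t} yields $-\gamma_y\left(\|s\widehat{E}_z(y_0)\|_{\Gamma_x}^2+\|s\widehat{E}_z(-y_0)\|_{\Gamma_x}^2\right)$. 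Taking real parts and moving these to the left, the full identity collapses to exactly $\widehat{\mathcal{E}}_{\sigma}(s)=0$; for the PEC data \eqref{eq:wall_bc_x_Laplace_pec}, \eqref{eq:wall_bc_y_Laplace_pec} the boundary terms vanish outright, giving $\widehat{\mathcal{E}}^0_{\sigma}(s)=0$ instead. Since every term of $\widehat{\mathcal{E}}_{\sigma}$ is nonnegative (from $\Re{s}>0$, the positive coefficients above, and $\gamma_x,\gamma_y\ge0$), the vanishing of the energy forces $\widehat{E}_z=\partial_x\widehat{E}_z=\partial_y\widehat{E}_z=0$, so no nontrivial mode with $\Re{s}>0$ can exist.

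The main obstacle will be the careful bookkeeping of the complex PML metric $S_x$ through the integration by parts: one must keep the operator $\frac{1}{S_x}\partial_x$ intact rather than expand it, verify the identity $\frac{|S_x|^2}{S_x^2}=\frac{S_x^*}{S_x}$, and correctly match the signs of the boundary contributions at the $\pm$ faces so that the dissipative boundary terms align with those in \eqref{eq:pml_norm_laplace}. The positivity of the real parts $\Re\left((sS_x)^*/S_x\right)$ and $\Re\left(1/S_x\right)$, relegated to lemma \ref{Lem:Duru3}, is exactly what guarantees that the assembled quantity is a genuine (nonnegative) energy rather than a merely signed identity, and hence that $\widehat{\mathcal{E}}_{\sigma}(s)=0$ implies triviality of the mode.
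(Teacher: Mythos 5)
Your proposal is correct and follows essentially the same route as the paper: multiply \eqref{eq:pml_wave_laplace_t} by $(s\widehat{E}_z)^*$, integrate by parts, insert the boundary conditions, add the complex conjugate, and invoke lemma \ref{Lem:Duru3} to conclude that the resulting nonnegative quantity $\widehat{\mathcal{E}}_{\sigma}(s)$ (or $\widehat{\mathcal{E}}^0_{\sigma}(s)$ in the PEC case) vanishes. The algebraic identity $\bar{s}\,|S_x|^2/S_x^2=(sS_x)^*/S_x$ and the sign bookkeeping at the $\pm$ faces are exactly as in the paper's equations \eqref{eq:enrgery_laplace_2}--\eqref{eq:enrgery_laplace_3}.
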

\begin{proof}
Multiply equation \eqref{eq:pml_wave_laplace_t} from the left by $ \left(s\widehat{{E}}_z\right)^*$,  and integrate over the whole domain, we have 
\begin{align}\label{eq:enrgery_laplace_1}
\left(s^2\widehat{E}_z, s\widehat{E}_z\right) = \left(\frac{1}{S_x}\frac{\partial}{\partial x}\left(\frac{1}{S_x}\frac{\partial \widehat{ E}_z}{\partial x}\right), s\widehat{E}_z \right) + \left(\frac{d^2 \widehat{ E}_z}{d y^2}, s\widehat{E}_z\right) 
\end{align}
Integrating by parts and using the boundary conditions \eqref{eq:wall_bc_x_Laplace_t}, \eqref{eq:wall_bc_y_Laplace_t} obtaining
\begin{align}\label{eq:enrgery_laplace_2}
s\left(s\widehat{E}_z, s\widehat{E}_z\right)& = -\frac{\left(sS_x\right)^*}{S_x}\left(\frac{1}{S_x}\frac{\partial \widehat{ E}_z}{\partial x}, \frac{1}{S_x}\frac{\partial \widehat{ E}_z}{\partial x} \right) - s^*\left(\frac{\partial \widehat{ E}_z}{\partial  y}, \frac{\partial \widehat{ E}_z}{\partial  y}\right)   -  \gamma_y\left( s\widehat{ E}_z\left(-y_0\right), s\widehat{ E}_z\left(-y_0\right)\right)_{\Gamma_x}  -  \gamma_y\left( s\widehat{ E}_z\left(y_0\right), s\widehat{ E}_z\left(y_0\right)\right)_{\Gamma_x}
 \notag  \\ 
 &
 - \frac{ \gamma_x}{S_x}\left( s\widehat{ E}_z\left(-x_0-\delta\right), s\widehat{ E}_z\left(-x_0-\delta\right)\right)_{\Gamma_y}  - \frac{ \gamma_x}{S_x}\left( s\widehat{ E}_z\left(x_0+\delta\right), s\widehat{ E}_z\left(x_0+\delta\right)\right)_{\Gamma_y}.
\end{align}
We add the complex  conjugates of the products in \eqref{eq:enrgery_laplace_2} to obtain 
\begin{align}\label{eq:enrgery_laplace_3}
& \Re{s}\|s\widehat{E}_z\|^2 + \Re{\left(\frac{(sS_x)^*}{S_x}\right)}\Big\|\frac{1}{S_x}\frac{\partial { \widehat{E}_z}}{\partial x} \Big\|^2  + \Re{s}\Big\|\frac{\partial { \widehat{E}_z}}{\partial y} \Big\|^2  
 +   \gamma_y\left(\|s\widehat{E}_z\left(y_0\right)\|^2_{\Gamma_x} + \|s\widehat{E}_z\left(-y_0\right)\|^2_{\Gamma_x}\right) \notag  \\
&+  \gamma_x\Re{\left(\frac{1}{S_x}\right)}  \left(\|s\widehat{E}_z\left(x_0+\delta\right)\|^2_{\Gamma_y} + \|s\widehat{E}_z\left(-x_0-\delta\right)\|^2_{\Gamma_y}\right) = 0.
\end{align}
 Note that in particular if the PEC boundary conditions \eqref{eq:wall_bc_x_Laplace_pec}, \eqref{eq:wall_bc_y_Laplace_pec} are imposed,  the boundary terms vanish and 
 \begin{align}\label{eq:enrgery_laplace_4}
& \Re{s}\|s\widehat{E}_z\|^2 + \Re{\left(\frac{(sS_x)^*}{S_x}\right)}\Big\|\frac{1}{S_x}\frac{\partial { \widehat{E}_z}}{\partial x} \Big\|^2  + \Re{s}\Big\|\frac{\partial { \widehat{E}_z}}{\partial y} \Big\|^2   = 0.
\end{align}
 Thus, for $\Re{s} > 0$ no nontrivial solutions $\widehat{E}_z \ne 0$  can be supported by the PML.
\end{proof}

\subsection{Energy estimates in the time domain}
To enable the development of a  systematic  and rigorous  stable numerical boundary procedure for the time--dependent PML  we will  derive energy estimates for the variable coefficient PML. The importance of the energy estimates are twofold: The first is that the energy estimates establish the well--posedness of the variable coefficients PMLs. The second is that by mimicking the energy estimates we can construct high order accurate, stable and convergent discrete approximations of the PML in a bounded domain. 
\subsubsection{Energy estimates for the modal PML}
The first step is to rewrite the PML  \eqref{eq:Maxwell_PML_WaveGuide_1}--\eqref{eq:Maxwell_PML_WaveGuide_4} and the boundary conditions \eqref{eq:wall_bc_y_0},  \eqref{eq:wall_bc_x} as a second order system. Differentiating equation  \eqref{eq:Maxwell_PML_WaveGuide_1} with respect to time and using  equations  \eqref{eq:Maxwell_PML_WaveGuide_2}--\eqref{eq:Maxwell_PML_WaveGuide_4} to eliminate the time--derivatives on the right hand side, we obtain 
\begin{subequations}\label{eq:Maxwell_PML_WaveGuide_2nd2}
    \begin{alignat}{2}
     \frac{\partial^2{ E_z}}{\partial t^2} + \sigma \frac{\partial{ E_z}}{\partial t}&= \frac{\partial}{\partial x}\left(\frac{\partial{ E_z}}{\partial x} + \sigma H_y\right) + \frac{\partial}{\partial y}\left(\frac{\partial{ E_z}}{\partial y} + \sigma H_x\right)  , \label{eq:Maxwell_PML_WaveGuide_2nd}
 \\
     \frac{\partial{ H_y}}{\partial t}  &= -\frac{\partial{ E_z}}{\partial x} - \sigma H_y, \label{eq:Maxwell_PML_WaveGuide_x}\\
     \frac{\partial{ H_x}}{\partial t} &= \frac{\partial{ E_z}}{\partial y}.
      \label{eq:Maxwell_PML_WaveGuide_y}
    \end{alignat}
  \end{subequations}

The corresponding boundary conditions are
\begin{equation}\label{eq:wall_bc_y1}
   \gamma_x E_z \pm H_x = 0\implies \gamma_x\frac{\partial{ E_z}}{\partial t} + \sigma\gamma_x E_z \pm \left(\frac{\partial{ E_z}}{\partial y}+ \sigma H_x\right) = 0, \quad \text{at} \quad y = \pm y_0.
\end{equation}
\begin{equation}\label{eq:wall_bc_x1}
 \gamma_yE_z \mp   H_y = 0, \implies\gamma_y\frac{\partial{ E_z}}{\partial t} \pm \left(\frac{\partial{ E_z}}{\partial x} + \sigma H_y\right) = 0, \quad \text{at} \quad x = \pm (x_0 + \delta).
\end{equation}
or 
\begin{equation}\label{eq:wall_bc_y1_pec}
  E_z  = 0, \implies \frac{\partial{ E_z}}{\partial t} = 0, \quad \text{at} \quad y = \pm y_0.
\end{equation}
\begin{equation}\label{eq:wall_bc_x1_pec}
E_z  = 0, \implies \frac{\partial{ E_z}}{\partial t} = 0, \quad \text{at} \quad x = \pm (x_0 + \delta).
\end{equation}
Here, the length and width of the computational domain are $\Gamma_x := \{ x: -x_0-\delta \le x\le x_0+\delta\} $ and  $\Gamma_y := \{ y: -y_0 \le y\le y_0\}$, respectively.
To derive an  energy estimate we  introduce the relevant  energy norms 
\begin{equation}\label{eq:EnergyNorm_continuous}
\begin{split}
\mathcal{E}_{\sigma}^{\left(1\right)}\left(t\right) &= \Big\|\frac{\partial { E_z}}{\partial t} \Big\|^2  +  \Big\|\frac{\partial{ E_z}}{\partial x} + \sigma H_y\Big\|^2 + \Big\|\frac{\partial{ E_z}}{\partial y}+ \sigma H_x\Big\|^2 + \Big\|\sigma H_y\Big\|^2 + \Big\| \sigma H_x\Big\|^2 +\gamma \sigma\left\| E_z\left(-y_0, \mathrm{t}'\right)\right\|^2_{\Gamma_x} +\gamma\sigma \left\|E_z\left(y_0, \mathrm{t}'\right)\right\|^2_{\Gamma_x} \\
&
+\int_0^{t}\left(\gamma _x\left\|\frac{\partial { E_z}}{\partial t}\left(-y_0, \mathrm{t}'\right)\right\|^2_{\Gamma_x} +\gamma_x \left\|\frac{\partial { E_z}}{\partial t}\left(y_0, \mathrm{t}'\right)\right\|^2_{\Gamma_x}  + \gamma _y\left\|\frac{\partial { E_z}}{\partial t}\left(x_0+\delta, \mathrm{t}'\right)\right\|^2_{\Gamma_y}+\gamma _y \left\|\frac{\partial { E_z}}{\partial t}\left(-x_0-\delta, \mathrm{t}'\right)\right\|^2_{\Gamma_y}\right)\mathrm{dt}',
\end{split}
\end{equation}
and 
\begin{equation}\label{eq:EnergyNorm_continuous_pec}
\begin{split}
\mathcal{E}_{\sigma}^{\left(0\right)}\left(t\right) &= \Big\|\frac{\partial { E_z}}{\partial t} \Big\|^2  +  \Big\|\frac{\partial{ E_z}}{\partial x} + \sigma H_y\Big\|^2 + \Big\|\frac{\partial{ E_z}}{\partial y}+ \sigma H_x\Big\|^2 + \Big\|\sigma H_y\Big\|^2 + \Big\| \sigma H_x\Big\|^2 .
\end{split}
\end{equation}
Note that away from the boundaries, the energy norm \eqref{eq:EnergyNorm_continuous} is analogous to the norms derived in  \cite{DuAcoustic, Duru2013}. The difference  between \eqref{eq:EnergyNorm_continuous} and \cite{DuAcoustic, Duru2013}  is that the energy norm  introduced here contains also boundary norms, $\|\cdot\|_{\Gamma_x}$, $\|\cdot\|_{\Gamma_y}$.
Note also that  $\mathcal{E}_{\sigma}^{\left(1\right)}\left(t\right)  = 0$, $\mathcal{E}_{\sigma}^{\left(0\right)}\left(t\right)  = 0$, if and only if 
\[
\frac{\partial { E_z}}{\partial t} = \frac{\partial { E_z}}{\partial x} = \frac{\partial { E_z}}{\partial y} =  \sigma H_x =  \sigma H_y = 0.
\]
When the damping vanishes, $\sigma = 0$, we have the physical energy norms
\begin{equation}\label{eq:EnergyNorm_continuous_physical}
\begin{split}
\mathcal{E}^{\left(1\right)}\left(t\right) &= \Big\|\frac{\partial { E_z}}{\partial t} \Big\|^2  +  \Big\|\frac{\partial{ E_z}}{\partial x} \Big\|^2 + \Big\|\frac{\partial{ E_z}}{\partial y}\Big\|^2  \\
&
+2\int_0^{t}\left(\gamma_x \left\|\frac{\partial { E_z}}{\partial t}\left(-y_0, \mathrm{t}'\right)\right\|^2_{\Gamma_x} +\gamma_x \left\|\frac{\partial { E_z}}{\partial t}\left(y_0, \mathrm{t}'\right)\right\|^2_{\Gamma_x}  + \gamma _y \left\|\frac{\partial { E_z}}{\partial t}\left(x_0+\delta, \mathrm{t}'\right)\right\|^2_{\Gamma_y}+ \gamma _y\left\|\frac{\partial { E_z}}{\partial t}\left(-x_0-\delta, \mathrm{t}'\right)\right\|^2_{\Gamma_y}\right)\mathrm{dt}',
\end{split}
\end{equation}
and 
\begin{equation}\label{eq:EnergyNorm_continuous_physical_pec}
\begin{split}
\mathcal{E}^{\left(0\right)}\left(t\right) &= \Big\|\frac{\partial { E_z}}{\partial t} \Big\|^2  +  \Big\|\frac{\partial{ E_z}}{\partial x} \Big\|^2 + \Big\|\frac{\partial{ E_z}}{\partial y}\Big\|^2  .
\end{split}
\end{equation}
Let us define the spaces of real functions
\begin{equation}
\mathbf{\mathcal{V}}_{\mathcal{E}^{\left(1\right)}}: = \{E_z:  \mathcal{E}^{\left(1\right)} < \infty\}, \quad \mathbf{\mathcal{V}}_{\mathcal{E}_{\sigma}^{\left(1\right)}}: = \{\left(E_z,  H_x,  H_y \right):  \mathcal{E}_{\sigma}^{\left(1\right)} < \infty\},
\end{equation}
\begin{equation}
\mathbf{\mathcal{V}}_{\mathcal{E}^{\left(0\right)}}: = \{E_z:  \mathcal{E}^{\left(0\right)} < \infty\}, \quad \mathbf{\mathcal{V}}_{\mathcal{E}_{\sigma}^{\left(0\right)}}: = \{\left(E_z,  H_x,  H_y \right):  \mathcal{E}_{\sigma}^{\left(0\right)} < \infty\},
\end{equation}
and $L^2\left(\Omega\right)$  the space of square integrable real functions. It can be  shown that 
\begin{equation}
\left(E_z,  H_x,  H_y \right) \in \mathbf{\mathcal{V}}_{\mathcal{E}_{\sigma}^{\left(1\right)}} \iff \{E_z \in \mathbf{\mathcal{V}}_{\mathcal{E}^{\left(1\right)}}, \quad \left(H_x,  H_y \right) \in L^2\left(\Omega\right)\},
\end{equation}
and
\begin{equation}
\left(E_z,  H_x,  H_y \right) \in \mathbf{\mathcal{V}}_{\mathcal{E}_{\sigma}^{\left(0\right)}} \iff \{E_z \in \mathbf{\mathcal{V}}_{\mathcal{E}^{\left(0\right)}}, \quad \left(H_x,  H_y \right) \in L^2\left(\Omega\right)\}.
\end{equation}
We can now prove the theorem, see  Appendix \ref{proof:Wellposedness}:
\begin{theorem}\label{Theorem:Wellposedness}
All solutions of the  perfectly matched layer \eqref{eq:Maxwell_PML_WaveGuide_1}--\eqref{eq:Maxwell_PML_WaveGuide_4} or  (\ref{eq:Maxwell_PML_WaveGuide_2nd})--(\ref{eq:Maxwell_PML_WaveGuide_y}) subject to the  boundary conditions  (\ref{eq:wall_bc_y1})  at $y = \pm y_0$ and  (\ref{eq:wall_bc_x1}) at $x = \pm x_0 $, or (\ref{eq:wall_bc_y1_pec})  at $y = \pm y_0$ and  (\ref{eq:wall_bc_x1_pec}) at $x = \pm x_0 $ satisfies the energy estimate
\begin{equation}\label{eq:EnergyEstimate_continuous}
\begin{split}
\frac{\mathrm{d}}{\mathrm{dt}}\sqrt{\mathcal{E}_{\sigma}^{\left(1\right)}\left(\mathrm{t}\right)} \le { \sigma_{\infty}}\sqrt{\mathcal{E}_{\sigma}^{\left(1\right)}\left(t\right)}, \quad  \text{or}  \quad  \frac{\mathrm{d}}{\mathrm{dt}}\sqrt{\mathcal{E}_{\sigma}^{\left(0\right)}\left(\mathrm{t}\right)} \le { \sigma_{\infty}}\sqrt{\mathcal{E}_{\sigma}^{\left(0\right)}\left(t\right)},
\end{split}
\end{equation}
where $\sigma_{\infty} = \max_x \sigma(x). $
\end{theorem}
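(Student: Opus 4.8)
The plan is to apply the energy method to the second order formulation \eqref{eq:Maxwell_PML_WaveGuide_2nd}--\eqref{eq:Maxwell_PML_WaveGuide_y}, since the norm $\mathcal{E}_{\sigma}^{(1)}$ is built exactly from the quantities $\partial_t E_z$, $\partial_x E_z+\sigma H_y$, $\partial_y E_z+\sigma H_x$, $\sigma H_y$ and $\sigma H_x$ occurring in that system. First I would multiply the wave--type equation \eqref{eq:Maxwell_PML_WaveGuide_2nd} by $\partial_t E_z$ and integrate over the rectangle $\Omega=\Gamma_x\times\Gamma_y$. The term $\left(\partial_{tt}E_z,\partial_t E_z\right)$ equals $\tfrac12\tfrac{d}{dt}\|\partial_t E_z\|^2$, while $\sigma\left(\partial_t E_z,\partial_t E_z\right)=\|\sqrt{\sigma}\,\partial_t E_z\|^2\ge 0$ is a manifestly dissipative contribution that I will discard. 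On the right--hand side I would integrate by parts in $x$ and in $y$, producing the interior products $-\left(\partial_x E_z+\sigma H_y,\ \partial_t\partial_x E_z\right)$ and $-\left(\partial_y E_z+\sigma H_x,\ \partial_t\partial_y E_z\right)$ together with boundary contributions at $x=\pm(x_0+\delta)$ and $y=\pm y_0$.

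The key algebraic step is to reassemble these interior products into exact time derivatives of the energy. Using the auxiliary laws \eqref{eq:Maxwell_PML_WaveGuide_x} and \eqref{eq:Maxwell_PML_WaveGuide_y} in the form $\partial_t(\sigma H_y)=-\sigma\left(\partial_x E_z+\sigma H_y\right)$ and $\partial_t(\sigma H_x)=\sigma\,\partial_y E_z$, I would rewrite $\partial_t\partial_x E_z=\partial_t\!\left(\partial_x E_z+\sigma H_y\right)+\sigma\left(\partial_x E_z+\sigma H_y\right)$ and $\partial_t\partial_y E_z=\partial_t\!\left(\partial_y E_z+\sigma H_x\right)-\sigma\,\partial_y E_z$. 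The leading pieces then supply $-\tfrac12\tfrac{d}{dt}\bigl(\|\partial_x E_z+\sigma H_y\|^2+\|\partial_y E_z+\sigma H_x\|^2\bigr)$, and adding the identities $\tfrac12\tfrac{d}{dt}\|\sigma H_y\|^2=-\left(\sigma H_y,\sigma\left(\partial_x E_z+\sigma H_y\right)\right)$ and $\tfrac12\tfrac{d}{dt}\|\sigma H_x\|^2=\left(\sigma H_x,\sigma\,\partial_y E_z\right)$ completes the volume part of $\mathcal{E}_{\sigma}^{(1)}$. What is left over are undifferentiated residual terms, each carrying a factor $\sigma$: the dissipative terms $-\int_\Omega\sigma\,(\partial_x E_z+\sigma H_y)^2$ and $-\int_\Omega\sigma\,(\sigma H_x)^2$, together with the indefinite terms $\int_\Omega\sigma(\partial_y E_z)^2$, $2\int_\Omega\sigma^2 H_x\,\partial_y E_z$ and $-\int_\Omega\sigma^2 H_y\left(\partial_x E_z+\sigma H_y\right)$.

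I would then dispatch the boundary terms using the boundary conditions. Along $x=\pm(x_0+\delta)$, condition \eqref{eq:wall_bc_x1} gives $\partial_x E_z+\sigma H_y=\mp\gamma_y\,\partial_t E_z$, so the $x$--boundary contribution equals the dissipative term $-\gamma_y\|\partial_t E_z\|^2_{\Gamma_y}$ at each wall, which is compensated by differentiating the time--integral boundary norms already present in $\mathcal{E}_{\sigma}^{(1)}$. Along $y=\pm y_0$, condition \eqref{eq:wall_bc_y1} gives $\partial_y E_z+\sigma H_x=\mp\left(\gamma_x\,\partial_t E_z+\sigma\gamma_x E_z\right)$, which splits the $y$--boundary contribution into the dissipative part $-\gamma_x\|\partial_t E_z\|^2_{\Gamma_x}$ and the exact derivative $-\tfrac12\sigma\gamma_x\tfrac{d}{dt}\|E_z\|^2_{\Gamma_x}$, the latter being absorbed by the surface norms $\gamma\sigma\|E_z(\pm y_0)\|^2_{\Gamma_x}$ in $\mathcal{E}_{\sigma}^{(1)}$. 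For the PEC conditions \eqref{eq:wall_bc_x1_pec}, \eqref{eq:wall_bc_y1_pec} one has $\partial_t E_z\equiv 0$ on every wall, so all boundary terms drop and only $\mathcal{E}_{\sigma}^{(0)}$ survives.

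It remains to bound the indefinite residuals, and this is where I expect the real work to lie. The crucial observation is that completing the square turns $\int_\Omega\sigma\bigl((\partial_y E_z)^2+2\sigma H_x\,\partial_y E_z\bigr)$ into $\int_\Omega\sigma\,(\partial_y E_z+\sigma H_x)^2-\int_\Omega\sigma\,(\sigma H_x)^2\le\sigma_\infty\|\partial_y E_z+\sigma H_x\|^2$, the cross term being absorbed exactly; this cancellation is precisely why the shifted gradient $\|\partial_y E_z+\sigma H_x\|^2$ and the weighted field $\|\sigma H_x\|^2$ both appear in the energy. The last term is controlled by Young's inequality, $\bigl|\int_\Omega\sigma^2 H_y(\partial_x E_z+\sigma H_y)\bigr|\le\tfrac{\sigma_\infty}{2}\bigl(\|\sigma H_y\|^2+\|\partial_x E_z+\sigma H_y\|^2\bigr)$. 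Since each of these squared norms is one of the nonnegative summands of $\mathcal{E}_{\sigma}^{(1)}$, the residuals are bounded by $\sigma_\infty\mathcal{E}_{\sigma}^{(1)}$. Collecting the dissipative interior and boundary terms (which are discarded) with this bound yields $\tfrac12\tfrac{d}{dt}\mathcal{E}_{\sigma}^{(1)}\le\sigma_\infty\mathcal{E}_{\sigma}^{(1)}$, and dividing by $2\sqrt{\mathcal{E}_{\sigma}^{(1)}}$ gives the claimed estimate; the PEC case is identical with $\mathcal{E}_{\sigma}^{(0)}$. The main obstacle is exactly this sharp bookkeeping of the indefinite residuals: one must exploit the precise make--up of the energy norm so that the estimate closes with the single factor $\sigma_\infty$ rather than a larger constant.
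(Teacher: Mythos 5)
Your proposal is correct and follows essentially the same route as the paper's proof in Appendix B: multiply \eqref{eq:Maxwell_PML_WaveGuide_2nd} by $\partial E_z/\partial t$, integrate by parts, absorb the boundary terms via \eqref{eq:wall_bc_x1}, \eqref{eq:wall_bc_y1}, add $\tfrac{d}{dt}\|\sigma H_x\|^2+\tfrac{d}{dt}\|\sigma H_y\|^2$ to complete the squares in $\mathcal{E}_{\sigma}^{(1)}$, eliminate the remaining time derivatives with \eqref{eq:Maxwell_PML_WaveGuide_x}--\eqref{eq:Maxwell_PML_WaveGuide_y}, and bound the residuals by $\sigma_\infty\mathcal{E}_{\sigma}^{(1)}$. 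Your treatment of the final step (completing the square in the $y$-terms and Young's inequality on the $x$-cross term) is in fact more explicit than the paper's one-line appeal to Cauchy--Schwarz, and it correctly closes with the single factor $\sigma_\infty$.
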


\begin{remark}
It is particularly noteworthy that the PML models  \eqref{eq:Maxwell_PML_Split_11}--\eqref{eq:Maxwell_PML_Split_14} and \eqref{eq:Maxwell_PML_Split_21}--\eqref{eq:Maxwell_PML_Split_24} can be rewritten directly in the time domain as \eqref{eq:Maxwell_PML_WaveGuide_1}--\eqref{eq:Maxwell_PML_WaveGuide_4}. Therefore, theorem \ref{Theorem:Wellposedness} holds also for the solutions of \eqref{eq:Maxwell_PML_Split_11}--\eqref{eq:Maxwell_PML_Split_14} and \eqref{eq:Maxwell_PML_Split_21}--\eqref{eq:Maxwell_PML_Split_24}.
\end{remark}

In the next section, by mimicking the continuous energy estimate \eqref{eq:EnergyEstimate_continuous}, we will develop stable numerical boundary procedures for the PML models \eqref{eq:Maxwell_PML_Split_11}--\eqref{eq:Maxwell_PML_Split_14}, \eqref{eq:Maxwell_PML_WaveGuide_1}--\eqref{eq:Maxwell_PML_WaveGuide_4} and \eqref{eq:Maxwell_PML_Split_21}--\eqref{eq:Maxwell_PML_Split_24}, subject to the  boundary conditions  (\ref{eq:wall_bc_y1})  at $y = \pm y_0$ and  (\ref{eq:wall_bc_x1}) at $x = \pm (x_0+\delta) $. 
\subsubsection{Energy estimates for the physically motivated PML}
The physically motivated PML model \eqref{eq:Maxwell_PML_Ababarnel_1}--\eqref{eq:Maxwell_PML_Ababarnel_4}, satisfies the definition of a strongly hyperbolic system. 
By applying the energy method directly to  the PML, \eqref{eq:Maxwell_PML_Ababarnel_1}--\eqref{eq:Maxwell_PML_Ababarnel_4},  subject to the boundary conditions \eqref{eq:wall_bc_x_0}, \eqref{eq:wall_bc_y},  with $\gamma_j \ge 0$,  we have
\begin{equation}\label{eq:estimate_phymotpml_1}
\begin{split}
&\frac{d}{dt}\left(\|E_z\left(t\right)\|^2  + \|H_y\left(t\right)\|^2  + \|H_x\left(t\right)\|^2 + \|P\left(t\right)\|^2\right)  =  -2\sigma \|E_z\left(t\right)\|^2  -2\sigma \|H_y\left(t\right)\|^2  + 2\sigma \|H_x\left(t\right)\|^2 -2\sigma \|P\left(t\right)\|^2
\\
&- 2\gamma_x  \left(\|E_z\left(t, x_0+\delta\right)\|^2_{\Gamma_y} + \|E_z\left(t, -x_0-\delta\right)\|^2_{\Gamma_y}\right) - 2\gamma_y \ \left(\|E_z\left(t, y_0\right)\|^2_{\Gamma_x} + \|E_z\left(t, -y_0\right)\|^2_{\Gamma_x}\right).
\end{split}
\end{equation}
In particular if  $\gamma_j = 0$ in \eqref{eq:wall_bc_x_0}, \eqref{eq:wall_bc_y} or $R_j = -1$ (we consider the PEC condition) in \eqref{eq:wall_bc_x},  \eqref{eq:wall_bc_y_0} then
\begin{equation}\label{eq:estimate_phymotpml_pec}
\begin{split}
&\frac{d}{dt}\left(\|E_z\left(t\right)\|^2  + \|H_y\left(t\right)\|^2  + \|H_x\left(t\right)\|^2 + \|P\left(t\right)\|^2\right)  =  -2\sigma \|E_z\left(t\right)\|^2  -2\sigma \|H_y\left(t\right)\|^2  + 2\sigma \|H_x\left(t\right)\|^2 -2\sigma \|P\left(t\right)\|^2.
\end{split}
\end{equation}
Note that  all terms, excepting $+ 2\sigma \|H_x\left(t\right)\|^2$, in the right hand sides of  \eqref{eq:estimate_phymotpml_1},  \eqref{eq:estimate_phymotpml_pec}   are dissipative.
We denote the  energy by
\begin{equation}\label{eq:energy_phymotpml}
\begin{split}
\mathcal{E}^{(2)}\left(\mathrm{t}\right) &= \|E_z\left(t\right)\|^2  + \|H_y\left(t\right)\|^2  + \|H_x\left(t\right)\|^2 + \|P\left(t\right)\|^2 \\
&
+2\int_0^{t}\left(\gamma_x  \left(\|E_z\left(t', x_0+\delta\right)\|^2_{\Gamma_y} + \|E_z\left(t', -x_0-\delta\right)\|^2_{\Gamma_y}\right)  + \gamma_y \ \left(\|E_z\left(t', y_0\right)\|^2_{\Gamma_x} + \|E_z\left(t', -y_0\right)\|^2_{\Gamma_x}\right)\right)\mathrm{dt}',
\end{split}
\end{equation}
and 
\begin{equation}\label{eq:energy_phymotpml_pec}
\begin{split}
\mathcal{E}^{(2)}_0\left(\mathrm{t}\right) &= \|E_z\left(t\right)\|^2  + \|H_y\left(t\right)\|^2  + \|H_x\left(t\right)\|^2 + \|P\left(t\right)\|^2 ,
\end{split}
\end{equation}
if  $\gamma_j = 0$ in \eqref{eq:wall_bc_x_0}, \eqref{eq:wall_bc_y} or $R_j = -1$ (we consider the PEC condition) in \eqref{eq:wall_bc_x}.
We have
\begin{theorem}\label{Theorem:Wellposedness_physical}
The solutions of the perfectly matched layer \eqref{eq:Maxwell_PML_Ababarnel_1}--\eqref{eq:Maxwell_PML_Ababarnel_4} subject to the  boundary conditions \eqref{eq:wall_bc_x_0}, \eqref{eq:wall_bc_y} with $|R_j| < 1$ or $|R_j| = 1$ satisfy the energy estimate
\begin{equation}\label{eq:EnergyEstimate_continuous_physical}
\begin{split}
\frac{\mathrm{d}}{\mathrm{dt}}\sqrt{\mathcal{E}^{\left(2\right)}\left(\mathrm{t}\right)} \le  \sigma_{\infty} \sqrt{\mathcal{E}^{\left(2\right)}\left(t\right)}, \quad \text{or} \quad \frac{\mathrm{d}}{\mathrm{dt}}\sqrt{\mathcal{E}_0^{\left(2\right)}\left(\mathrm{t}\right)} \le  \sigma_{\infty} \sqrt{\mathcal{E}_0^{\left(2\right)}\left(t\right)},
\end{split}
\end{equation}
where $\sigma_{\infty} = \max_x \sigma(x). $
\end{theorem}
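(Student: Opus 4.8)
The plan is to derive the stated differential inequality directly from the energy identities \eqref{eq:estimate_phymotpml_1} and \eqref{eq:estimate_phymotpml_pec}, which already account for every boundary contribution via the standard energy method (multiply \eqref{eq:Maxwell_PML_Ababarnel_1}--\eqref{eq:Maxwell_PML_Ababarnel_4} by $E_z,H_y,H_x,P$ respectively, integrate over $\Omega$, integrate by parts in the curl terms, and apply \eqref{eq:wall_bc_x_0}, \eqref{eq:wall_bc_y}), combined with the definitions of the energies \eqref{eq:energy_phymotpml} and \eqref{eq:energy_phymotpml_pec}. I would treat the two cases $|R_j|<1$ and $|R_j|=1$ in parallel, since they differ only in whether the boundary integrals are present; for $|R_j|=1$ (either PEC, $R_j=-1$, or $\gamma_j=0$, $R_j=1$) the boundary terms vanish and one works with $\mathcal{E}_0^{(2)}$ and \eqref{eq:estimate_phymotpml_pec}.

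First I would differentiate $\mathcal{E}^{(2)}(t)$ from \eqref{eq:energy_phymotpml}. The derivative of the $L_2$ part reproduces the left-hand side of \eqref{eq:estimate_phymotpml_1}, while the fundamental theorem of calculus applied to the time integral contributes $2\gamma_x(\cdots)+2\gamma_y(\cdots)$ evaluated at time $t$. These are precisely the negative boundary terms on the right-hand side of \eqref{eq:estimate_phymotpml_1}, so they cancel exactly, leaving
\[
\frac{d}{dt}\mathcal{E}^{(2)}(t) = -2\sigma\|E_z\|^2 - 2\sigma\|H_y\|^2 + 2\sigma\|H_x\|^2 - 2\sigma\|P\|^2.
\]
The identical computation with \eqref{eq:estimate_phymotpml_pec} and \eqref{eq:energy_phymotpml_pec} yields the same right-hand side for $\mathcal{E}_0^{(2)}(t)$.

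The key observation is that this right-hand side contains exactly one non-dissipative term, $+2\sigma\|H_x\|^2$; the three remaining terms are nonpositive and may be discarded. Since $\sigma(x)\le\sigma_\infty$ pointwise and, because $\gamma_j\ge0$, the boundary integral in \eqref{eq:energy_phymotpml} is nonnegative, the quantity $\|H_x(t)\|^2$ is dominated by the full energy, $\|H_x(t)\|^2 \le \mathcal{E}^{(2)}(t)$. Therefore
\[
\frac{d}{dt}\mathcal{E}^{(2)}(t) \le 2\sigma\|H_x\|^2 \le 2\sigma_\infty\|H_x(t)\|^2 \le 2\sigma_\infty\,\mathcal{E}^{(2)}(t),
\]
and likewise for $\mathcal{E}_0^{(2)}$. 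Writing $\frac{d}{dt}\mathcal{E}^{(2)} = 2\sqrt{\mathcal{E}^{(2)}}\,\frac{d}{dt}\sqrt{\mathcal{E}^{(2)}}$ and dividing by $2\sqrt{\mathcal{E}^{(2)}}$ then gives the claimed bound $\frac{d}{dt}\sqrt{\mathcal{E}^{(2)}}\le\sigma_\infty\sqrt{\mathcal{E}^{(2)}}$.

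I expect the only genuine subtlety to be the non-dissipative term $+2\sigma\|H_x\|^2$: it is exactly the asymmetry of the physically motivated PML that obstructs a clean decay estimate and forces the result to take the form of controlled exponential growth at rate $\sigma_\infty$ rather than strict dissipation. A minor technical point is the division by $\sqrt{\mathcal{E}^{(2)}}$, which presupposes $\mathcal{E}^{(2)}(t)>0$; at any instant where the energy vanishes the inequality holds trivially since both sides are zero, so the estimate extends to all $t$ by continuity.
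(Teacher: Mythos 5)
Your proposal is correct and follows essentially the same route as the paper: the paper obtains the energy identities \eqref{eq:estimate_phymotpml_1} and \eqref{eq:estimate_phymotpml_pec} by the direct energy method, notes that the only non-dissipative term is $+2\sigma\|H_x(t)\|^2$, and absorbs it into $\sigma_\infty\,\mathcal{E}^{(2)}(t)$ exactly as you do, with the time-integrated boundary terms in \eqref{eq:energy_phymotpml} cancelling the boundary contributions upon differentiation. Your remarks on the $|R_j|=1$ case and on dividing by $\sqrt{\mathcal{E}^{(2)}}$ when the energy vanishes are consistent with the paper's treatment.
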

\section{Discrete approximations and discrete stability analysis of the PMLs}
The main focus of this section is to design accurate, stable and efficient  numerical  boundary procedures for the PMLs, \eqref{eq:Maxwell_PML_Split_11}--\eqref{eq:Maxwell_PML_Split_14}, \eqref{eq:Maxwell_PML_WaveGuide_1}--\eqref{eq:Maxwell_PML_WaveGuide_4},\eqref{eq:Maxwell_PML_Split_21}--\eqref{eq:Maxwell_PML_Split_24} and \eqref{eq:Maxwell_PML_Ababarnel_1}--\eqref{eq:Maxwell_PML_Ababarnel_4}, subject to the boundary conditions \eqref{eq:wall_bc_y_0}, \eqref{eq:wall_bc_x}. We will use SBP finite difference operators, see \cite{Bert_Gust, Strand94}, to approximate the spatial derivatives and impose boundary conditions weakly using penalties \cite{CarpenterGottliebAbarbanel1994, Matt2003}. To ensure numerical stability we will impose boundary conditions in a manner which allows a derivation of   discrete energy estimates analogous to the continuous energy estimate (\ref{eq:EnergyEstimate_continuous}). For the physically motivated PML model \eqref{eq:Maxwell_PML_Ababarnel_1}--\eqref{eq:Maxwell_PML_Ababarnel_4}, an efficient numerical  boundary treatment is achieved  by  using characteristics and reducing the strength of the penalty parameters. Finally, we will show that the corresponding discrete models do not support growing modes.
\subsection{Discrete approximation  and discrete energy estimates for the  modal PML}
In standard numerical methods \cite{Abarbanel2002, Abarbanel2009,  Becache2003}, once the interior scheme is proven stable the discrete PML is obtained by discretizing the auxiliary differential equations and appending the auxiliary functions accordingly. Usually no more attention is placed on how the boundary conditions are implemented in the PML. Note that  numerical experiments  presented in  section 3 and the next section show that a straightforward enforcement of the boundary conditions, \eqref{eq:wall_bc_y_0} and \eqref{eq:wall_bc_x}, can lead to unstable solutions in the PML. A  stable discrete approximation for the modal PML can  be obtained by  extending the enforcement of the boundary condition to the auxiliary differential equation, having 
\begin{subequations}\label{eq:Maxwell_PML1_WaveGuide_Stable_0}
    \begin{alignat}{2}
       \frac{\mathrm{d}{\mathbf{E}_z}}{\mathrm{d t}} &= -\left(D_x \otimes I_y\right)\mathbf {H}_y + \left(I_x \otimes D_y\right)\mathbf {H}_x +\mathbf {H}_x^* - \mathbf{\sigma}\mathbf{E}_z \notag \\
    &   \underbrace{-\alpha_x\left(\frac{1-R_x}{2}\left(\mathrm{P}_x^{-1}\left(E_{Rx}+E_{Lx}\right)\otimes I_y \right)\mathbf {E}_z  
 - \frac{1+R_x}{2}\left(\mathrm{P}_x^{-1}\left(E_{Rx}-E_{Lx}\right)\otimes I_y \right)\mathbf {H}_y\right)}_{\mathrm{SAT}_x} \notag \\
 & \underbrace{-\alpha_y \left( \frac{1-R_y}{2}\left(I_x \otimes \mathrm{P}_y^{-1}\left(E_{Ry}+E_{Ly}\right)\right)\mathbf {E}_z +\frac{1+R_y}{2}\left(I_x \otimes \mathrm{P}_y^{-1}\left(E_{Ry}-E_{Ly}\right)\right)\mathbf {H}_x\right)}_{\mathrm{SAT}_y},
\label{eq:Maxwell_PML_WaveGuide_Discrete_11_Stable_0} \\
      \frac{\mathrm{d}{\mathbf{H}_y}}{\mathrm{d t}}  &= -\left(D_x \otimes I_y\right)\mathbf {E}_z- \mathbf{\sigma}\mathbf {H}_y \underbrace{+\theta_x\left(\frac{1-R_x}{2}\left(\mathrm{P}_x^{-1}\left(E_{Rx}-E_{Lx}\right)\otimes I_y \right)\mathbf {E}_z  
 - \frac{1+R_x}{2}\left(\mathrm{P}_x^{-1}\left(E_{Rx}+E_{Lx}\right)\otimes I_y \right)\mathbf {H}_y\right)}_{\mathrm{SAT}_x},\label{eq:Maxwell_PML_WaveGuide_Discrete_21_Stable_0} \\
     \frac{\mathrm{d}{\mathbf{H}_x}}{\mathrm{d t}}  &= \left(I_x \otimes D_y\right)\mathbf {E}_z \underbrace{-\theta_y \left( \frac{1-R_y}{2}\left(I_x \otimes \mathrm{P}_y^{-1}\left(E_{Ry}-E_{Ly}\right)\right)\mathbf {E}_z +\frac{1+R_y}{2}\left(I_x \otimes \mathrm{P}_y^{-1}\left(E_{Ry}+E_{Ly}\right)\right)\mathbf {H}_x\right)}_{\mathrm{SAT}_y}, \label{eq:Maxwell_PML_WaveGuide_Discrete_31_Stable_0}\\ 
   \frac{\mathrm{d}{\mathbf{H}_x^*}}{\mathrm{d t}}   &= \sigma \left(\left(I_x \otimes D_y\right)  \mathbf {H}_x  \underbrace{-\theta \alpha_y \left( \frac{1-R_y}{2}\left(I_x \otimes \mathrm{P}_y^{-1}\left(E_{Ry}+E_{Ly}\right)\right)\mathbf {E}_z +\frac{1+R_y}{2}\left(I_x \otimes \mathrm{P}_y^{-1}\left(E_{Ry}-E_{Ly}\right)\right)\mathbf {H}_x\right)}_{\text{Stabilizing term}} \right) .\label{eq:Maxwell_PML_WaveGuide_Discrete_41_Stable_0}
    \end{alignat}
  \end{subequations}
Note that we have included the term  involving $\theta$.  The standard choice $\theta = 0$,  which corresponds to \eqref{eq:Maxwell_PML_WaveGuide_Discrete_11}--\eqref{eq:Maxwell_PML_WaveGuide_Discrete_41}, is obtained  by simply replacing spatial derivatives with difference operators in the PML. The other choice, $\theta \ne 0$, extends weak enforcements of the boundary conditions, \eqref{eq:wall_bc_y_0} in the $y$-direction,  to the  auxiliary differential equation \eqref{eq:Maxwell_PML_WaveGuide_Discrete_4}. It is particularly important to note that this extension, with $\theta \ne 0$, does not destroy  the accuracy  of the scheme \eqref{eq:Maxwell_PML_WaveGuide_Discrete_1}--\eqref{eq:Maxwell_PML_WaveGuide_Discrete_4}. When $\sigma = 0$, the auxiliary variable $\mathbf{H}_x^*$ also vanishes, and the discrete approximation \eqref{eq:Maxwell_PML_WaveGuide_Discrete_1}--\eqref{eq:Maxwell_PML_WaveGuide_Discrete_4},   for any value of $\theta$, correspond to the stable discrete approximation  \eqref{eq:Maxwell_WaveGuide_Discrete_1}--\eqref{eq:Maxwell_WaveGuide_Discrete_3}. However, as we will  see later, when $\sigma > 0$ the particular choice $\theta = 1$ ensures numerical stability.

To be able to simplify the analysis and also make comparisons with results published in the literature, we will focus on  the setup considered in section \ref{sect:num_example} and  \cite{Abarbanel1998, Abarbanel2002, Abarbanel2009}. Therefore, we set $R_x = R_y = 0$ and the penalty parameters
\[
\alpha_x = 2, \quad \alpha_y = 2, \quad \theta_x = \theta_y = 0,
\]
having
\begin{subequations}\label{eq:Maxwell_PML1_WaveGuide_Stable}
    \begin{alignat}{2}
        \frac{\mathrm{d}{\mathbf{E}_z}}{\mathrm{d t}} &= -\left(D_x \otimes I_y\right)\mathbf {H}_y + \left(I_x \otimes D_y\right)\mathbf {H}_x +\mathbf {H}_x^* - \mathbf{\sigma}\mathbf{E}_z 
      \underbrace{-\left(\left(\mathrm{P}_x^{-1}\left(E_{Rx}+E_{Lx}\right)\otimes I_y \right)\mathbf {E}_z  
 - \left(\mathrm{P}_x^{-1}\left(E_{Rx}-E_{Lx}\right)\otimes I_y \right)\mathbf {H}_y\right)}_{\mathrm{SAT}_x} \notag \\
 & \underbrace{- \left(\left(I_x \otimes \mathrm{P}_y^{-1}\left(E_{Ry}+E_{Ly}\right)\right)\mathbf {E}_z +\left(I_x \otimes \mathrm{P}_y^{-1}\left(E_{Ry}-E_{Ly}\right)\right)\mathbf {H}_x\right)}_{\mathrm{SAT}_y},
 \label{eq:Maxwell_PML_WaveGuide_Discrete_1} \\
      \frac{\mathrm{d}{\mathbf{H}_y}}{\mathrm{d t}}  &= -\left(D_x \otimes I_y\right)\mathbf {E}_z- \mathbf{\sigma}\mathbf {H}_y, \label{eq:Maxwell_PML_WaveGuide_Discrete_2} \\
     \frac{\mathrm{d}{\mathbf{H}_x}}{\mathrm{d t}}  &= \left(I_x \otimes D_y\right)\mathbf {E}_z , \label{eq:Maxwell_PML_WaveGuide_Discrete_3}\\ 
   \frac{\mathrm{d}{\mathbf{H}_x^*}}{\mathrm{d t}}   &= \sigma \left(\left(I_x \otimes D_y\right)  \mathbf {H}_x \underbrace{- \theta \left(\left(I_x \otimes \mathrm{P}_y^{-1}\left(E_{Ry}+E_{Ly}\right)\right)\mathbf {E}_z +\left(I_x \otimes \mathrm{P}_y^{-1}\left(E_{Ry}-E_{Ly}\right)\right)\mathbf {H}_x\right)}_{\text{Stabilizing term}}\right) .\label{eq:Maxwell_PML_WaveGuide_Discrete_4}
    \end{alignat}
  \end{subequations}

\subsubsection{Discrete energy estimates in the time domain}
We will derive a discrete energy estimate  analogous to the continuous energy estimate  (\ref{eq:EnergyEstimate_continuous}). First and foremost, we rewrite \eqref{eq:Maxwell_PML_WaveGuide_Discrete_1}--\eqref{eq:Maxwell_PML_WaveGuide_Discrete_4} as a second order system. Differentiate equation \eqref{eq:Maxwell_PML_WaveGuide_Discrete_1} in time, and use equations \eqref{eq:Maxwell_PML_WaveGuide_Discrete_2}--\eqref{eq:Maxwell_PML_WaveGuide_Discrete_4}  to eliminate the time derivatives in the right hand side. We have
\begin{subequations}\label{eq:Maxwell_PML1_WaveGuide_Stable_step1}
    \begin{alignat}{2}
        \frac{\mathrm{d^2}{\mathbf{E}_z}}{\mathrm{d t^2}} &= \left(D_x \otimes I_y\right)\left(\left(D_x \otimes I_y\right)\mathbf {E}_z+ \mathbf{\sigma}\mathbf {H}_y\right) + \left(I_x \otimes D_y\right)\left(I_x \otimes D_y\right)\mathbf {E}_z  \\
        \notag
        &+ \sigma \left(\left(I_x \otimes D_y\right)  \mathbf {H}_x \underbrace{- \theta \left( \left(I_x \otimes \mathrm{P}_y^{-1}\left(E_{Ry}+E_{Ly}\right)\right)\mathbf {E}_z +\left(I_x \otimes \mathrm{P}_y^{-1}\left(E_{Ry}-E_{Ly}\right)\right)\mathbf {H}_x\right)}_{\text{Stabilizing term}}\right) - \mathbf{\sigma}\frac{\mathrm{d}{\mathbf{E}_z}}{\mathrm{d t}} 
       \\
       \notag
     & \underbrace{-\left(\left(\mathrm{P}_x^{-1}\left(E_{Rx}+E_{Lx}\right)\otimes I_y \right)\frac{\mathrm{d}{\mathbf{E}_z}}{\mathrm{d t}}   
 + \left(\mathrm{P}_x^{-1}\left(E_{Rx}-E_{Lx}\right)\otimes I_y \right)\left(\left(D_x \otimes I_y\right)\mathbf {E}_z+ \mathbf{\sigma}\mathbf {H}_y\right)\right)}_{\mathrm{SAT}_x} \notag \\
 & \underbrace{- \left(\left(I_x \otimes \mathrm{P}_y^{-1}\left(E_{Ry}+E_{Ly}\right)\right)\frac{\mathrm{d}{\mathbf{E}_z}}{\mathrm{d t}}  +\left(I_x \otimes \mathrm{P}_y^{-1}\left(E_{Ry}-E_{Ly}\right)\right)\left(I_x \otimes D_y\right)\mathbf {E}_z\right)}_{\mathrm{SAT}_y},
 \label{eq:Maxwell_PML_WaveGuide_Discrete_1_step1} \\
      \frac{\mathrm{d}{\mathbf{H}_y}}{\mathrm{d t}}  &= -\left(D_x \otimes I_y\right)\mathbf {E}_z- \mathbf{\sigma}\mathbf {H}_y, \label{eq:Maxwell_PML_WaveGuide_Discrete_2_step1} \\
     \frac{\mathrm{d}{\mathbf{H}_x}}{\mathrm{d t}}  &= \left(I_x \otimes D_y\right)\mathbf {E}_z , \label{eq:Maxwell_PML_WaveGuide_Discrete_3_step1}
    \end{alignat}
  \end{subequations}
 Subsequently, by using the SBP property \eqref{eq:SBPDx} we eliminate some of the SAT terms and obtain

\begin{subequations}\label{eq:Maxwell_PML_WaveGuide_Discrete_first}
    \begin{alignat}{2}
      \frac{\mathrm{d^2}{\mathbf{E}_z}}{\mathrm{d t^2}}  &= -\left(\mathrm{P}_x^{-1}D_x^T \otimes I_y\right)\left(\mathrm{P}_x \otimes I_y\right)\left(\left(D_x \otimes I_y\right)\mathbf {E}_z +  \mathbf{\sigma}\mathbf {H}_y\right) - \left( I_x\otimes\mathrm{P}_y^{-1}D_y^T \right)\left(I_x \otimes \mathrm{P}_y\right)\left(\left(I_x \otimes D_y\right)\mathbf {E}_z +  \mathbf{\sigma}\mathbf {H}_x\right) 
  - \sigma\frac{\mathrm{d}{\mathbf{E}_z}}{\mathrm{d t}} \notag \\ 
  &+ \sigma(1-\theta) \left(I_x \otimes \mathrm{P}_y^{-1}\left(E_{Ry}-E_{Ly}\right)\right)\mathbf {H}_x  - \left(\mathrm{P}_x^{-1}\left(E_{Rx}+E_{Lx}\right)\otimes I_y + \left(I_x \otimes \mathrm{P}_y^{-1}\left(E_{Ry}+E_{Ly}\right)\right)\right)\frac{\mathrm{d}{\mathbf{E}_z}}{\mathrm{d t}} \notag\\
  & - \sigma\theta\left(I_x \otimes \mathrm{P}_y^{-1}\left(E_{Ry}+E_{Ly}\right)\right)\mathbf{E}_z,\label{eq:Maxwell_PML_WaveGuide_Discrete_first_1} \\
      \frac{\mathrm{d}{\mathbf{H}_y}}{\mathrm{d t}}  &= -\left(D_x \otimes I_y\right)\mathbf {E}_z- \mathbf{\sigma}\mathbf {H}_y , \label{eq:Maxwell_PML_WaveGuide_Discrete_first_2} \\
     \frac{\mathrm{d}{\mathbf{H}_x}}{\mathrm{d t}}  &= \left(I_x \otimes D_y\right)\mathbf {E}_z.\label{eq:Maxwell_PML_WaveGuide_Discrete_first_3}
    \end{alignat}
  \end{subequations}
Introduce the discrete energy
\begin{equation}\label{eq:Energy_Discrete_first}
\begin{split}
 \mathcal{E}_{h\sigma}^{(1)}\left(t\right) &= \Big\|\frac{\mathrm{d} { \mathbf{E}_z}}{\mathrm {d t}} \Big\|^2_{\mathbf{P}_{xy}}  +  \Big\|\left(D_x \otimes I_y\right)\mathbf {E}_z+ \mathbf{\sigma}\mathbf {H}_y\Big\|^2_{\mathbf{P}_{xy}} + \Big\| \left(I_x \otimes D_y\right)\mathbf {E}_z + \sigma\mathbf {H}_x\Big\|^2_{\mathbf{P}_{xy}} + \Big\|\sigma\mathbf {H}_y\Big\|^2_{\mathbf{P}_{xy}} + \Big\| \sigma\mathbf {H}_x\Big\|^2_{\mathbf{P}_{xy}}\\
 & + \mathbf{E}_z^T\left(\mathrm{P}_x \otimes \sigma\theta\left(E_{Ry}+E_{Ly}\right)\right)\mathbf{E}_z
 + 2\int_0^t\left(\frac{\mathrm{d}{\mathbf{E}_z}}{\mathrm{d t}}^T \left(\left(E_{Rx}+E_{Lx}\right)\otimes \mathrm{P}_y  +  \mathrm{P}_x\otimes \left(E_{Ry}+E_{Ly}\right)\right)\frac{\mathrm{d}{\mathbf{E}_z}}{\mathrm{d t}}\right) dt' .
  \end{split}
  \end{equation}
When $\sigma = 0$ we have the standard discrete energy
\begin{equation}\label{eq:Energy_Discrete_standard}
\begin{split}
 \mathcal{E}_{h}^{(1)}\left(t\right) &= \Big\|\frac{\mathrm{d} { \mathbf{E}_z}}{\mathrm {d t}} \Big\|^2_{\mathbf{P}_{xy}}  +  \Big\|\left(D_x \otimes I_y\right)\mathbf {E}_z\Big\|^2_{\mathbf{P}_{xy}} + \Big\| \left(I_x \otimes D_y\right)\mathbf {E}_z \Big\|^2_{\mathbf{P}_{xy}} \\
 &+ 2\int_0^t\left(\frac{\mathrm{d}{\mathbf{E}_z}}{\mathrm{d t}}^T \left(\left(E_{Rx}+E_{Lx}\right)\otimes \mathrm{P}_y  +  \mathrm{P}_x\otimes \left(E_{Ry}+E_{Ly}\right)\right)\frac{\mathrm{d}{\mathbf{E}_z}}{\mathrm{d t}}\right) dt' .
  \end{split}
  \end{equation}

We can also prove the following, see  Appendix \ref{proof:Discrete_Stability}.
\begin{theorem}\label{Theorem:Discrete_Stability}
Consider the discrete approximation \eqref{eq:Maxwell_PML_WaveGuide_Discrete_1}--\eqref{eq:Maxwell_PML_WaveGuide_Discrete_4} or \eqref{eq:Maxwell_PML_WaveGuide_Discrete_first_1}--\eqref{eq:Maxwell_PML_WaveGuide_Discrete_first_3} of the the perfectly matched layer \eqref{eq:Maxwell_PML_WaveGuide_1}--\eqref{eq:Maxwell_PML_WaveGuide_4}  subject to the  boundary conditions  (\ref{eq:wall_bc_y1})  at $y = \pm y_0$ and  (\ref{eq:wall_bc_x1}) at $x = \pm x_0 $. If $\theta = 1$, then the quantity $ \mathcal{E}_{h\sigma}\left(t\right)$ defined in \eqref{eq:Energy_Discrete_first} is an energy and    the solutions of the discrete system \eqref{eq:Maxwell_PML_WaveGuide_Discrete_1}--\eqref{eq:Maxwell_PML_WaveGuide_Discrete_4} or \eqref{eq:Maxwell_PML_WaveGuide_Discrete_first_1}--\eqref{eq:Maxwell_PML_WaveGuide_Discrete_first_3} satisfy the energy estimate
\begin{equation}\label{eq:EnergyEstimate_Discrete}
\begin{split}
\frac{\mathrm{d}}{\mathrm{dt}}\sqrt{\mathcal{E}_{h\sigma}^{(1)}\left(t\right)}  \le &\sigma_{\infty}\sqrt{\mathcal{E}_{h\sigma}^{(1)}\left(t\right)} .
\end{split}
\end{equation}
\end{theorem}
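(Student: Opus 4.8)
The plan is to mimic, at the discrete level, the continuous energy argument behind Theorem~\ref{Theorem:Wellposedness}, with the SBP property \eqref{eq:SBPDx}--\eqref{eq:SBPD2x} playing the role of integration by parts and the diagonal norm $\mathbf{P}_{xy}$ playing the role of the $L_2$ inner product. First I would check that $\mathcal{E}_{h\sigma}^{(1)}$ is genuinely an energy: with $\theta = 1 \ge 0$ and $\sigma \ge 0$, every term in \eqref{eq:Energy_Discrete_first} is nonnegative, since the five $\mathbf{P}_{xy}$-weighted squared norms are nonnegative, the matrices $E_{Ry}+E_{Ly}$ and $E_{Rx}+E_{Lx}$ are diagonal with nonnegative entries, and $\mathrm{P}_x,\mathrm{P}_y$ are positive definite; hence the boundary quadratic form and the time-integrated boundary term are nonnegative as well, giving $\mathcal{E}_{h\sigma}^{(1)}(t)\ge 0$.

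For the estimate itself I would apply the discrete energy method to the second order form \eqref{eq:Maxwell_PML_WaveGuide_Discrete_first_1}--\eqref{eq:Maxwell_PML_WaveGuide_Discrete_first_3}: multiply \eqref{eq:Maxwell_PML_WaveGuide_Discrete_first_1} from the left by $\left(\frac{\mathrm{d}\mathbf{E}_z}{\mathrm{d}t}\right)^T\mathbf{P}_{xy}$, add the transpose of the resulting scalar, and use \eqref{eq:Maxwell_PML_WaveGuide_Discrete_first_2}--\eqref{eq:Maxwell_PML_WaveGuide_Discrete_first_3} to substitute for $\frac{\mathrm{d}\mathbf{H}_y}{\mathrm{d}t}$ and $\frac{\mathrm{d}\mathbf{H}_x}{\mathrm{d}t}$. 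The left side yields $\frac{\mathrm{d}}{\mathrm{dt}}\left\|\frac{\mathrm{d}\mathbf{E}_z}{\mathrm{d}t}\right\|^2_{\mathbf{P}_{xy}}$. The volume terms carrying $D_x$ and $D_y$, after simplifying factors such as $\mathbf{P}_{xy}(\mathrm{P}_x^{-1}D_x^T\otimes I_y)(\mathrm{P}_x\otimes I_y)$, reassemble --- together with the $\sigma\mathbf{H}_y,\sigma\mathbf{H}_x$ contributions coming from the auxiliary equations --- into $\frac{\mathrm{d}}{\mathrm{dt}}\|(D_x\otimes I_y)\mathbf{E}_z+\sigma\mathbf{H}_y\|^2_{\mathbf{P}_{xy}}$ and $\frac{\mathrm{d}}{\mathrm{dt}}\|(I_x\otimes D_y)\mathbf{E}_z+\sigma\mathbf{H}_x\|^2_{\mathbf{P}_{xy}}$, plus $\sigma$-dependent remainders, while the boundary/SAT terms proportional to $\frac{\mathrm{d}\mathbf{E}_z}{\mathrm{d}t}$ produce exactly the integrand of the time-integrated boundary term in \eqref{eq:Energy_Discrete_first}.

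Here the choice $\theta=1$ is decisive and I would isolate its two effects. First, the indefinite boundary contribution $\sigma(1-\theta)\left(I_x\otimes\mathrm{P}_y^{-1}(E_{Ry}-E_{Ly})\right)\mathbf{H}_x$ in \eqref{eq:Maxwell_PML_WaveGuide_Discrete_first_1}, which cannot be absorbed into the energy, vanishes identically. Second, the remaining term $-\sigma\theta\left(I_x\otimes\mathrm{P}_y^{-1}(E_{Ry}+E_{Ly})\right)\mathbf{E}_z$, after multiplication by $\left(\frac{\mathrm{d}\mathbf{E}_z}{\mathrm{d}t}\right)^T\mathbf{P}_{xy}$ and symmetrization, equals $-\frac{\mathrm{d}}{\mathrm{dt}}\left[\mathbf{E}_z^T\left(\mathrm{P}_x\otimes\sigma\theta(E_{Ry}+E_{Ly})\right)\mathbf{E}_z\right]$, precisely the boundary quadratic term carried in \eqref{eq:Energy_Discrete_first}. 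Moving all such exact time derivatives onto the energy then leaves $\frac{\mathrm{d}}{\mathrm{dt}}\mathcal{E}_{h\sigma}^{(1)}(t)$ equal to the remaining volume terms carrying a bare factor $\sigma$ (the discrete analogue of the non-dissipative $\sigma$ contribution that produces the $\sigma_\infty$ factor in \eqref{eq:EnergyEstimate_continuous}, together with cross terms between $\frac{\mathrm{d}\mathbf{E}_z}{\mathrm{d}t}$ and $\sigma\mathbf{H}_x,\sigma\mathbf{H}_y$).

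The final step is to bound these remaining terms by $2\sigma_\infty\mathcal{E}_{h\sigma}^{(1)}(t)$, using $\sigma(x)\le\sigma_\infty$ and the Cauchy--Schwarz inequality in the $\mathbf{P}_{xy}$ inner product, recognizing each factor as controlled by a term already present in $\mathcal{E}_{h\sigma}^{(1)}$; this gives $\frac{\mathrm{d}}{\mathrm{dt}}\mathcal{E}_{h\sigma}^{(1)}\le 2\sigma_\infty\mathcal{E}_{h\sigma}^{(1)}$, equivalently \eqref{eq:EnergyEstimate_Discrete}. I expect the principal obstacle to be the boundary-term bookkeeping: one must track every contribution produced when the SBP relation rewrites the $D_x^T,D_y^T$ volume terms as interior terms plus $E_{R\cdot},E_{L\cdot}$ boundary pieces, and verify that these combine exactly with the already partially simplified SAT terms so that no uncontrolled boundary term survives. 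It is exactly at this junction that $\theta=1$ is forced, since any other value leaves the indefinite $\sigma(1-\theta)(\cdots)\mathbf{H}_x$ boundary term in play.
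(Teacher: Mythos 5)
Your proposal is correct and follows essentially the same route as the paper's proof in Appendix C: multiply the second--order form \eqref{eq:Maxwell_PML_WaveGuide_Discrete_first_1} by $\left(\mathrm{d}\mathbf{E}_z/\mathrm{d}t\right)^T\mathbf{P}_{xy}$, use \eqref{eq:Maxwell_PML_WaveGuide_Discrete_first_2}--\eqref{eq:Maxwell_PML_WaveGuide_Discrete_first_3} to eliminate the time derivatives of the magnetic fields, add the $\|\sigma\mathbf{H}_x\|^2_{\mathbf{P}_{xy}}$ and $\|\sigma\mathbf{H}_y\|^2_{\mathbf{P}_{xy}}$ terms to complete the energy, and close with Cauchy--Schwarz; you also correctly identify the two roles of $\theta=1$ (killing the indefinite $(E_{Ry}-E_{Ly})\mathbf{H}_x$ boundary term and turning the $(E_{Ry}+E_{Ly})\mathbf{E}_z$ term into an exact time derivative absorbed into $\mathcal{E}_{h\sigma}^{(1)}$), which is exactly how the paper argues.
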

Note that when $\sigma = 0$ we have a strict energy estimate
\begin{equation}\label{eq:EnergyEstimate_Discrete_Standard}
\begin{split}
\frac{\mathrm{d}}{\mathrm{dt}}\sqrt{\mathcal{E}_{h\sigma}^{(1)}\left(t\right)}  \le &0,
\end{split}
\end{equation}
 for any values of $\theta$. However,  for  $\sigma \ge 0$, if  $\theta \ne  1$ we are unable to derive a discrete energy estimate for the discrete PML. In standard numerical codes, as shown in section 3, once the interior scheme is proven stable, the PML  is then added as lower order modification, leading to the standard choice $\theta = 0$. If  $\theta \ne 1$, then there is a non-vanishing boundary term,
 \begin{align}\label{eq:NonVanishing}
  \sigma(1-\theta) \left(I_x \otimes \mathrm{P}_y^{-1}\left(E_{Ry}-E_{Ly}\right)\right)\mathbf {H}_x. 
 \end{align}
  The boundary term \eqref{eq:NonVanishing} will probably diminish with mesh refinement, $h \to 0$. However, on a realistic mesh the boundary term \eqref{eq:NonVanishing} may be nontrivial.  

The energy estimate \eqref{eq:EnergyEstimate_Discrete} above does not exclude the possibility of exponentially growing solutions for $\sigma > 0$. However, the estimate can be extended to the error equations to proof convergence of the solutions of the discrete approximation \eqref{eq:Maxwell_PML_WaveGuide_Discrete_1}--\eqref{eq:Maxwell_PML_WaveGuide_Discrete_4} or \eqref{eq:Maxwell_PML_WaveGuide_Discrete_first_1}--\eqref{eq:Maxwell_PML_WaveGuide_Discrete_first_3} for any finite time interval $[0, T]$. Below, we will prove that at constant coefficients the semi--discrete PML problem  \eqref{eq:Maxwell_PML_WaveGuide_Discrete_1}--\eqref{eq:Maxwell_PML_WaveGuide_Discrete_4} or \eqref{eq:Maxwell_PML_WaveGuide_Discrete_first_1}--\eqref{eq:Maxwell_PML_WaveGuide_Discrete_first_3} can not support exponentially growing solutions.
%
\subsubsection{Discrete energy estimates in the Laplace space}
Consider now the semi--discrete system \eqref{eq:Maxwell_PML_WaveGuide_Discrete_first_1}--\eqref{eq:Maxwell_PML_WaveGuide_Discrete_first_3}  and take the Laplace transform in time, we have
\begin{subequations}\label{eq:Maxwell_PML_WaveGuide_Discrete_first_Laplace}
    \begin{alignat}{2}
      s^2\widehat{\mathbf{E}}_z  &= -\left(\mathrm{P}_x^{-1}D_x^T \otimes I_y\right)\left(\mathrm{P}_x \otimes I_y\right)\left(\left(D_x \otimes I_y\right) \widehat{\mathbf{E}}_z +  \mathbf{\sigma} \widehat{\mathbf{H}}_y\right) - \left( I_x\otimes\mathrm{P}_y^{-1}D_y^T \right)\left(I_x \otimes \mathrm{P}_y\right)\left(\left(I_x \otimes D_y\right) \widehat{\mathbf{E}}_z +  \mathbf{\sigma} \widehat{\mathbf{H}}_x\right) 
  - \sigma s \widehat{\mathbf{E}}_z \notag \\ 
  &+ \sigma(1-\theta) \left(I_x \otimes \mathrm{P}_y^{-1}\left(E_{Ry}-E_{Ly}\right)\right)\mathbf {H}_x  - \left(\mathrm{P}_x^{-1}\left(E_{Rx}+E_{Lx}\right)\otimes I_y + \left(I_x \otimes \mathrm{P}_y^{-1}\left(E_{Ry}+E_{Ly}\right)\right)\right)s \widehat{\mathbf{E}}_z\notag\\
  & - \sigma\theta\left(I_x \otimes \mathrm{P}_y^{-1}\left(E_{Ry}+E_{Ly}\right)\right) \widehat{\mathbf{E}}_z,\label{eq:Maxwell_PML_WaveGuide_Discrete_first_1_Laplace} \\
       s\widehat{\mathbf{H}}_y &= -\left(D_x \otimes I_y\right) \widehat{\mathbf{E}}_z- \mathbf{\sigma} \widehat{\mathbf{H}}_y , \label{eq:Maxwell_PML_WaveGuide_Discrete_first_2_Laplace} \\
      s\widehat{\mathbf{H}}_x  &= \left(I_x \otimes D_y\right) \widehat{\mathbf{E}}_z.\label{eq:Maxwell_PML_WaveGuide_Discrete_first_3_Laplace}
    \end{alignat}
  \end{subequations}
Here, $s$ with $\Re{s} > 0$, is the dual variable to time. Note that in \eqref{eq:Maxwell_PML_WaveGuide_Discrete_first_Laplace} we have also used the SBP properties \eqref{eq:SBPDx} and \eqref{eq:SBPD2x} to eliminate some of the SAT terms.  We can also  eliminate the magnetic fields in \eqref{eq:Maxwell_PML_WaveGuide_Discrete_first_1_Laplace} using \eqref{eq:Maxwell_PML_WaveGuide_Discrete_first_2_Laplace}, \eqref{eq:Maxwell_PML_WaveGuide_Discrete_first_3_Laplace}  and obtain
    \begin{align}\label{eq:Maxwell_PML_WaveGuide_Discrete_second_Laplace}
      s^2\widehat{\mathbf{E}}_z  &= -\frac{1}{S_x^2}\mathbf{P}_{xy}^{-1}\left(D_x^T \otimes I_y\right)\mathbf{P}_{xy}\left(\left(D_x \otimes I_y\right) \widehat{\mathbf{E}}_z\right) - \mathbf{P}_{xy}^{-1}\left( I_x\otimes D_y^T \right)\mathbf{P}_{xy}\left(\left(I_x \otimes D_y\right) \widehat{\mathbf{E}}_z\right)  \notag \\ 
  &+ \frac{\sigma}{sS_x}(1-\theta) \mathbf{P}_{xy}^{-1}\left(\mathrm{P}_x \otimes \left(E_{Ry}-E_{Ly}\right)\right)\left(I_x \otimes D_y\right) \widehat{\mathbf{E}}_z  -  \frac{s}{S_x} \mathbf{P}_{xy}^{-1}\left(\left(E_{Rx}+E_{Lx}\right)\otimes \mathrm{P}_y + \left(\mathrm{P}_x \otimes\left(E_{Ry}+E_{Ly}\right)\right)\right) \widehat{\mathbf{E}}_z\notag\\
  & -  \frac{\sigma}{S_x}\theta \mathbf{P}_{xy}^{-1}\left(\mathrm{P}_x \otimes \left(E_{Ry}+E_{Ly}\right)\right) \widehat{\mathbf{E}}_z,
    \end{align}
 where $S_x = 1 + \sigma/s$. 
 We define the discrete quantity
 \begin{align}
\widehat{\mathcal{E}}^{(1)}_{h\sigma}(s) &= \Re{s}|s|^2\widehat{\mathbf{E}}_z^*\mathbf{P}_{xy}\widehat{\mathbf{E}}_z + \frac{1}{|S_x|^2}\Re{\left(\frac{(sS_x)^*}{S_x}\right)}\left(\left(D_x \otimes I_y\right)\widehat{\mathbf{E}}_z\right)^*\mathbf{P}_{xy} \left(\left(D_x \otimes I_y\right)\widehat{\mathbf{E}}_z\right) \notag\\
&+ \Re{s}\left(\left(I_x \otimes D_y\right)\widehat{\mathbf{E}}_z\right)^*\mathbf{P}_{xy} \left(\left(I_x \otimes D_y\right)\widehat{\mathbf{E}}_z\right)
  +|s|^2\Re{\left(\frac{1}{S_x}\right)} \widehat{\mathbf{E}}_z^*\left(\left(E_{Rx}+E_{Lx}\right)\otimes \mathrm{P}_y + \left(\mathrm{P}_x \otimes \left(E_{Ry}+E_{Ly}\right)\right)\right) \widehat{\mathbf{E}}_z \notag \\
  &+ \theta\sigma \Re\left(\frac{s^*}{S_x}\right)\widehat{\mathbf{E}}_z^*\left(\mathrm{P}_x \otimes \left(E_{Ry}+E_{Ly}\right)\right) \widehat{\mathbf{E}}_z.
 \end{align}
Note that  it can be shown that 
\[
\Re{s} > 0 \implies \Re{\left(\frac{(sS_x)^*}{S_x}\right)},\Re{\left(\frac{1}{S_x}\right)}, \Re\left(\frac{s^*}{S_x}\right) > 0,
\] 
see lemma \ref{Lem:Duru3} in the Appendix. Therefore, if $\theta \ge 0$ then $\widehat{\mathcal{E}}^{(1)}_{h\sigma}(s)  > 0$ is an energy.
 
We can now prove the following result establishing the stability of the discretization \eqref{eq:Maxwell_PML1_WaveGuide_Stable}.
 \begin{theorem}\label{theo:asymptotic_stability}
 Consider the constant coefficient semi--discrete PML \eqref{eq:Maxwell_PML_WaveGuide_Discrete_first_Laplace} with $\Re{s} > 0$, $\sigma \ge 0$ and $\theta = 1$. All solutions of \eqref{eq:Maxwell_PML_WaveGuide_Discrete_first_Laplace} satisfy the energy equation
 \begin{align}\label{eq:asymptotic_stability}
 \widehat{\mathcal{E}}^{(1)}_{h\sigma}(s) = 0.
 \end{align}
 \end{theorem}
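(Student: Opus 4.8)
The plan is to reproduce, at the semi--discrete level, the Laplace--space energy argument that established Theorem \ref{Theorem:Wellposedness_Laplace} at the continuous level. The natural starting point is the reduced second--order form \eqref{eq:Maxwell_PML_WaveGuide_Discrete_second_Laplace}, in which the magnetic fields have already been eliminated and the SBP properties \eqref{eq:SBPDx}, \eqref{eq:SBPD2x} have already been used to transfer the differentiation matrices onto their transposes. Since we work at constant coefficients, $S_x = 1 + \sigma/s$ is a scalar and may be factored out of every inner product, which is what makes the discrete estimate parallel the continuous one so closely.

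First I would multiply \eqref{eq:Maxwell_PML_WaveGuide_Discrete_second_Laplace} from the left by $\left(s\widehat{\mathbf{E}}_z\right)^*\mathbf{P}_{xy}$; this is the discrete counterpart of multiplying the continuous equation by $\left(s\widehat{E}_z\right)^*$ and integrating over the domain. The left-hand side yields $|s|^2 s\,\widehat{\mathbf{E}}_z^*\mathbf{P}_{xy}\widehat{\mathbf{E}}_z$. The two volume terms on the right become $-\tfrac{s^*}{S_x^2}\big((D_x\otimes I_y)\widehat{\mathbf{E}}_z\big)^*\mathbf{P}_{xy}\big((D_x\otimes I_y)\widehat{\mathbf{E}}_z\big)$ and $-s^*\big((I_x\otimes D_y)\widehat{\mathbf{E}}_z\big)^*\mathbf{P}_{xy}\big((I_x\otimes D_y)\widehat{\mathbf{E}}_z\big)$, where I use $\widehat{\mathbf{E}}_z^*(D_x^T\otimes I_y) = \big((D_x\otimes I_y)\widehat{\mathbf{E}}_z\big)^*$ (valid since $D_x$ and $\mathbf{P}_{xy}$ are real) together with the symmetry of $\mathbf{P}_{xy}$. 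The penalty terms collapse into discrete boundary quadratic forms built from $E_{Rx}+E_{Lx}$, $E_{Ry}+E_{Ly}$, and $E_{Ry}-E_{Ly}$.

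Next I would add the complex conjugate of this scalar identity, which symmetrizes every coefficient into a real part: the leading term gives $2\Re(s)|s|^2\widehat{\mathbf{E}}_z^*\mathbf{P}_{xy}\widehat{\mathbf{E}}_z$; the $x$--derivative term gives the coefficient $\tfrac{2}{|S_x|^2}\Re\!\big(\tfrac{(sS_x)^*}{S_x}\big)$ via the identity $\tfrac{1}{|S_x|^2}\tfrac{(sS_x)^*}{S_x} = \tfrac{s^*}{S_x^2}$; the $y$--derivative term gives $2\Re(s)$; the external characteristic boundary term gives $2|s|^2\Re\!\big(\tfrac{1}{S_x}\big)$; and the $\theta$--term gives $2\sigma\theta\,\Re\!\big(\tfrac{s^*}{S_x}\big)$. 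The decisive point is that setting $\theta = 1$ annihilates the remaining contribution $\tfrac{\sigma}{sS_x}(1-\theta)\,\mathbf{P}_{xy}^{-1}\big(\mathrm{P}_x\otimes(E_{Ry}-E_{Ly})\big)(I_x\otimes D_y)\widehat{\mathbf{E}}_z$: this is the single non-symmetric term whose real part carries no definite sign, and it is exactly the discrete residue of the boundary term \eqref{eq:NonVanishing} that a naive $\theta = 0$ discretization fails to cancel. With $\theta = 1$ it vanishes, and after moving the negative volume contributions to the left the surviving identity is precisely $2\,\widehat{\mathcal{E}}^{(1)}_{h\sigma}(s) = 0$, which yields \eqref{eq:asymptotic_stability}.

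The main obstacle I anticipate is the careful bookkeeping of the Kronecker--structured penalty (SAT) terms: one must verify that, after multiplication by $\mathbf{P}_{xy}$ and application of the SBP relation, they assemble into exactly the boundary quadratic forms appearing in $\widehat{\mathcal{E}}^{(1)}_{h\sigma}(s)$, with the correct coefficients $\Re(1/S_x)$ and $\sigma\theta\,\Re(s^*/S_x)$ and no leftover cross terms. Positivity of these coefficients -- and hence the interpretation of $\widehat{\mathcal{E}}^{(1)}_{h\sigma}$ as a genuine energy for $\theta\ge 0$ -- follows from Lemma \ref{Lem:Duru3}, which guarantees that $\Re\!\big(\tfrac{(sS_x)^*}{S_x}\big)$, $\Re\!\big(\tfrac{1}{S_x}\big)$ and $\Re\!\big(\tfrac{s^*}{S_x}\big)$ are all positive whenever $\Re s > 0$. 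Combined with $\mathbf{P}_{xy}$ being symmetric positive definite and $E_{R\cdot},E_{L\cdot}$ positive semidefinite, this makes $\widehat{\mathcal{E}}^{(1)}_{h\sigma}(s)=0$ force $\widehat{\mathbf{E}}_z = 0$, so the semi--discrete operator admits no mode with $\Re s > 0$.
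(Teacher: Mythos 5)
Your proposal is correct and follows essentially the same route as the paper's proof: multiply \eqref{eq:Maxwell_PML_WaveGuide_Discrete_second_Laplace} from the left by $\left(s\widehat{\mathbf{E}}_z\right)^*\mathbf{P}_{xy}$, add the complex conjugate, recognize the energy $\widehat{\mathcal{E}}^{(1)}_{h\sigma}(s)$ on one side, and observe that the sole indefinite term carries the factor $(1-\theta)$ and hence vanishes for $\theta = 1$. The coefficient identities you record (e.g. $\tfrac{1}{|S_x|^2}\tfrac{(sS_x)^*}{S_x} = \tfrac{s^*}{S_x^2}$) and the appeal to Lemma \ref{Lem:Duru3} match the paper's argument.
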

 \begin{proof}
 Multiply equation \eqref{eq:Maxwell_PML_WaveGuide_Discrete_second_Laplace} from the left by $ \left(s\widehat{\mathbf{E}}_z\right)^*\mathbf{P}_{xy}$ and add the complex conjugate of the product we have 
 \begin{align}\label{eq:asymp_1}
& \Re{s}|s|^2\widehat{\mathbf{E}}_z^*\mathbf{P}_{xy}\widehat{\mathbf{E}}_z + \frac{1}{|S_x|^2}\Re{\left(\frac{(sS_x)^*}{S_x}\right)}\left(\left(D_x \otimes I_y\right)\widehat{\mathbf{E}}_z\right)^*\mathbf{P}_{xy} \left(\left(D_x \otimes I_y\right)\widehat{\mathbf{E}}_z\right) + \Re{s}\left(\left(I_x \otimes D_y\right)\widehat{\mathbf{E}}_z\right)^*\mathbf{P}_{xy} \left(\left(I_x \otimes D_y\right)\widehat{\mathbf{E}}_z\right)\notag\\
 & |s|^2\Re{\left(\frac{1}{S_x}\right)} \widehat{\mathbf{E}}_z^*\left(\left(E_{Rx}+E_{Lx}\right)\otimes \mathrm{P}_y + \left(\mathrm{P}_x \otimes \left(E_{Ry}+E_{Ly}\right)\right)\right) \widehat{\mathbf{E}}_z + \theta\sigma \Re\left(\frac{s^*}{S_x}\right)\widehat{\mathbf{E}}_z^*\left(\mathrm{P}_x \otimes \left(E_{Ry}+E_{Ly}\right)\right) \widehat{\mathbf{E}}_z\notag\\
 & = (1-\theta)\left(\frac{\sigma s^*}{sS_x} \widehat{\mathbf{E}}_z ^*\left(\mathrm{P}_x \otimes \left(E_{Ry}-E_{Ly}\right)\right)\left(I_x \otimes D_y\right) \widehat{\mathbf{E}}_z 
 + \frac{\sigma s}{s^*S_x^*} \widehat{\mathbf{E}}_z ^*\left(I_x \otimes D_y^T\right)\left(\mathrm{P}_x \otimes \left(E_{Ry}-E_{Ly}\right)\right) \widehat{\mathbf{E}}_z \right).
 \end{align}
On the left hand side of \eqref{eq:asymp_1} we recognize the energy $\widehat{\mathcal{E}}^{(1)}_{h\sigma}(s)$. On the right hand side, if $\theta = 1$, then all the expressions vanish, yielding
\[
\widehat{\mathcal{E}}^{(1)}_{h\sigma}(s) = 0.
\]
The proof of the theorem is complete.
\end{proof}

By theorem \eqref{theo:asymptotic_stability} no nontrivial solution of the discrete problem  \eqref{eq:Maxwell_PML1_WaveGuide_Stable} can grow exponentially for all $\sigma \ge 0$ and $\theta = 1.$ However, for  $\sigma \ge 0$ and $\theta \ne 1$ there are indefinite  non--vanishing boundary terms.
As we saw in the previous experiments these terms can ruin the accuracy of numerical solutions.
\subsection{Discrete approximation  and discrete energy estimates for the  split--field PML}
Here, we develop a stable numerical method for the split--field PML \eqref{eq:Maxwell_PML_Split_1}. Since the split--field PML \eqref{eq:Maxwell_PML_Split_1} can be rewritten as the modal unsplit \eqref{eq:Maxwell_PML_WaveGuide_1}--\eqref{eq:Maxwell_PML_WaveGuide_4}, our task is made simpler. We will design a numerical method for \eqref{eq:Maxwell_PML_Split_1} such that the corresponding discrete problem can be rewritten as the stable discrete modal PML \eqref{eq:Maxwell_PML1_WaveGuide_Stable} with $\theta = 1$. We can then apply theorem \ref{Theorem:Discrete_Stability} and theorem \ref{theo:asymptotic_stability} to prove the stability of the discretization.

The corresponding stable semi-discretization for the split--field PML   \eqref{eq:Maxwell_PML_Split_11}--\eqref{eq:Maxwell_PML_Split_14} is 
\begin{subequations}\label{eq:Maxwell_StableSplitFieldPML}
    \begin{alignat}{2}
       \frac{\mathrm{d}{\mathbf{E}_z^{(x)}}}{\mathrm{d t}} &= -\left(D_x \otimes I_y\right)\mathbf {H}_y  - \mathbf{\sigma}\mathbf{E}_z^{(y)} 
       \underbrace{-\left(\left(\mathrm{P}_x^{-1}\left(E_{Rx}+E_{Lx}\right)\otimes I_y \right)\left(\mathbf {E}_z^{(x)} + \mathbf {E}_z^{(y)}  \right)
 - \left(\mathrm{P}_x^{-1}\left(E_{Rx}-E_{Lx}\right)\otimes I_y \right)\mathbf {H}_y\right)}_{\mathrm{SAT}_x} ,
  \label{eq:Maxwell_StableSplitFieldPML1}\\
      \frac{\mathrm{d}{\mathbf{H}_y}}{\mathrm{d t}}  &= -\left(D_x \otimes I_y\right)\left(\mathbf {E}_z^{(x)} + \mathbf {E}_z^{(y)}  \right)- \mathbf{\sigma}\mathbf {H}_y, \label{eq:Maxwell_StableSplitFieldPML3} \\
     \frac{\mathrm{d}{\mathbf{H}_x}}{\mathrm{d t}}  &= \left(I_x \otimes D_y\right)\left(\mathbf {E}_z^{(x)} + \mathbf {E}_z^{(y)}  \right),\label{eq:Maxwell_StableSplitFieldPML4} \\
     \frac{\mathrm{d}{\mathbf{E}_z^{(y)}}}{\mathrm{d t}} &=  \left(I_x \otimes D_y\right)\mathbf {H}_x \underbrace{- \left( \left(I_x \otimes \mathrm{P}_y^{-1}\left(E_{Ry}+E_{Ly}\right)\right)\left(\mathbf {E}_z^{(x)} + \mathbf {E}_z^{(y)}  \right) +\left(I_x \otimes \mathrm{P}_y^{-1}\left(E_{Ry}-E_{Ly}\right)\right)\mathbf {H}_x\right)}_{\mathrm{SAT}_y}. \label{eq:Maxwell_StableSplitFieldPML2}
    \end{alignat}
  \end{subequations}
Note that we have moved the SAT-term $\mathrm{SAT}_y$ to the last equation.
This is important since we can rewrite the discrete approximation \eqref{eq:Maxwell_StableSplitFieldPML} as the stable discrete unsplit PML \eqref{eq:Maxwell_PML1_WaveGuide_Stable} with $\theta = 1$.

Similarly, as in the continuous case, using  the identity $\mathbf{E}_z  = \mathbf{E}_z^{(x)} + \mathbf{E}_z^{(y)} \implies \mathbf{E}_z^{(x)}  = \mathbf{E}_z - \mathbf{E}_z^{(y)}$ we can eliminate the split variable $\mathbf{E}_z^{(x)}$, obtaining
\begin{subequations}\label{eq:Maxwell_StableSplitFieldPML_modal}
    \begin{alignat}{2}
       \frac{\mathrm{d}{\mathbf{E}_z}}{\mathrm{d t}} &= -\left(D_x \otimes I_y\right)\mathbf {H}_y + \left(I_x \otimes D_y\right)\mathbf {H}_x + \mathbf{\sigma}\mathbf{E}_z ^{(y)}  - \mathbf{\sigma}\mathbf{E}_z  
       \underbrace{-\left(\left(\mathrm{P}_x^{-1}\left(E_{Rx}+E_{Lx}\right)\otimes I_y \right)\mathbf {E}_z
 - \left(\mathrm{P}_x^{-1}\left(E_{Rx}-E_{Lx}\right)\otimes I_y \right)\mathbf {H}_y\right)}_{\mathrm{SAT}_x}  \notag \\ 
& \underbrace{- \left( \left(I_x \otimes \mathrm{P}_y^{-1}\left(E_{Ry}+E_{Ly}\right)\right)\mathbf {E}_z +\left(I_x \otimes \mathrm{P}_y^{-1}\left(E_{Ry}-E_{Ly}\right)\right)\mathbf {H}_x\right)}_{\mathrm{SAT}_y},
  \label{eq:Maxwell_StableSplitFieldPML1_modal}\\
      \frac{\mathrm{d}{\mathbf{H}_y}}{\mathrm{d t}}  &= -\left(D_x \otimes I_y\right)\mathbf {E}_z- \mathbf{\sigma}\mathbf {H}_y, \label{eq:Maxwell_StableSplitFieldPML3_modal} \\
     \frac{\mathrm{d}{\mathbf{H}_x}}{\mathrm{d t}}  &= \left(I_x \otimes D_y\right)\mathbf {E}_z,
     \label{eq:Maxwell_StableSplitFieldPML4_modal} \\
      \frac{\mathrm{d}{\mathbf{E}_z^{(y)}}}{\mathrm{d t}} &=  \left(I_x \otimes D_y\right)\mathbf {H}_x \underbrace{- \left( \left(I_x \otimes \mathrm{P}_y^{-1}\left(E_{Ry}+E_{Ly}\right)\right)\mathbf {E}_z +\left(I_x \otimes \mathrm{P}_y^{-1}\left(E_{Ry}-E_{Ly}\right)\right)\mathbf {H}_x\right)}_{\text{Stabilizing term}}, \label{eq:Maxwell_StableSplitFieldPML2_modal}
    \end{alignat}
  \end{subequations}
Note that equation  \eqref{eq:Maxwell_StableSplitFieldPML_modal} above corresponds to discretizing the unsplit PML \eqref{eq:Maxwell_PML_Split_2}. Now multiply equation \eqref{eq:Maxwell_StableSplitFieldPML2_modal} by $\sigma$ and introduce  $\mathbf{H}_y^* = \sigma \mathbf{E}_z^{(y)} $, we obtain exactly 
the stable discrete unsplit PML \eqref{eq:Maxwell_PML1_WaveGuide_Stable} with $\theta = 1$. We can now apply theorem \ref{Theorem:Discrete_Stability} and theorem \ref{theo:asymptotic_stability} to prove the temporal stability of the discretizations \eqref{eq:Maxwell_StableSplitFieldPML} and \eqref{eq:Maxwell_StableSplitFieldPML_modal}. These results are clearly demonstrated by the numerical experiments performed in the next section.

\subsection{Discrete approximation  and discrete energy estimates for the  physically motivated PML}
 Now, we turn our attention to  the physically motivated PML model \eqref{eq:Maxwell_PML_Ababarnel_1}--\eqref{eq:Maxwell_PML_Ababarnel_4}. We know that the principal part of this PML is symmetric hyperbolic. The lower order terms appear in the PML in a nontrivial way. As  was shown in  \cite{Abarbanel1998, Abarbanel2002, Abarbanel2009} (and we have already seen in section \ref{sect:num_example}), the PML terms can pose a numerical challenge.  However, we will show that using the SBP--SAT scheme proposed in section \ref{sec:SBP_SAT},  we can choose penalties such that any stable numerical boundary procedure for the undamped problem (with $\sigma \equiv 0$) satisfying an energy estimate also  leads to  a discrete energy estimate,  analogous to continuous estimate \eqref{eq:EnergyEstimate_continuous_physical}, for the discrete PML  \eqref{eq:Maxwell_PML_Ababarnel_Discrete_11}--\eqref{eq:Maxwell_PML_Ababarnel_Discrete_41}. 
 However, the continuous energy \eqref{eq:EnergyEstimate_continuous_physical} for the PML  and the corresponding discrete energy for the discrete PML can grow exponentially in time. By theorem \eqref{Theorem:Wellposedness_Laplace}, we also know the constant coefficient PML model \eqref{eq:Maxwell_PML_Ababarnel_1}--\eqref{eq:Maxwell_PML_Ababarnel_4} subject to the  boundary conditions  (\ref{eq:wall_bc_y_0}),  (\ref{eq:wall_bc_x}) can not support exponentially growing solutions.  Therefore, at constant coefficients, for a particular set of penalty parameters, we will show that the solutions for the discrete PML can not grow exponentially in time.
 
%

To begin, consider the discrete PML \eqref{eq:Maxwell_PML_Ababarnel_Discrete_11}--\eqref{eq:Maxwell_PML_Ababarnel_Discrete_41}. 
By applying the energy method directly to \eqref{eq:Maxwell_PML_Ababarnel_Discrete_11}--\eqref{eq:Maxwell_PML_Ababarnel_Discrete_41}, we have
\begin{equation}\label{eq:Discrete_Dissipation}
\begin{split}
& \frac{d}{dt}\left(\left\|  \mathbf{E}_z\left(t\right) \right\|^2_{\mathbf{P}_{xy}} +  \left\|  \mathbf{H}_y\left(t\right) \right\|^2_{\mathbf{P}_{xy}}  +  \left\|  \mathbf{H}_x\left(t\right) \right\|^2_{\mathbf{P}_{xy}} + \left\|  \mathbf{P}\left(t\right) \right\|^2_{\mathbf{P}_{xy}}\right) =  -2\sigma \left\|  \mathbf{E}_z\left(t\right) \right\|^2_{\mathbf{P}_{xy}}  -2\sigma \left\|  \mathbf{H}_y\left(t\right) \right\|^2_{\mathbf{P}_{xy}}  +2 \sigma \left\|  \mathbf{H}_x\left(t\right) \right\|^2_{\mathbf{P}_{xy}} \\
&-2\sigma \left\|  \mathbf{P}\left(t\right) \right\|^2_{\mathbf{P}_{xy}}
-\textbf{BT}_s(t).
\end{split}
\end{equation}
The boundary terms $\textbf{BT}_s$ are exactly the same as the boundary terms derived for the undamped problem in section \ref{sec:SBP_SAT}.
As before, if  $R_x \ne -1$ and $R_y \ne -1$, then we can choose 
\[
\alpha_x = \frac{2}{1+R_x}, \quad \alpha_y = \frac{2}{1+R_y}, \quad \theta_x = \frac{2\bar{\theta}_x}{1+R_x}, \quad \theta_y =  \frac{2\bar{\theta}_y}{1+R_y},
\]
and
\[
0\le \bar{\theta}_x\le \frac{4}{\gamma_x}, \quad 0\le \bar{\theta}_y\le \frac{4}{\gamma_y},
\]
we have $\textbf{BT}_s(t) \ge 0$.
Also, for any $|R_x| \le 1$ and $|R_y| \le 1$, then we can choose 
\[
\alpha_x =  \alpha_y =  \theta_x = \theta_y = 1,
\]
obtaining $\textbf{BT}_s(t) \ge 0$.
Note that all terms in the right hand side of \eqref{eq:Discrete_Dissipation}, excepting $2\sigma \left\|  \mathbf{H}_x\left(t\right) \right\|^2_{\mathbf{P}_{xy}}$, are dissipative.
Introducing the  discrete energy 
\begin{equation}\label{eq:Energy_Discrete_first_Abarbanel}
\begin{split}
 \mathcal{E}_{h}^{(2)}\left(t\right) &= \left\|  \mathbf{E}_z\left(t\right) \right\|^2_{\mathbf{P}_{xy}} +  \left\|  \mathbf{H}_y\left(t\right) \right\|^2_{\mathbf{P}_{xy}}  +  \left\|  \mathbf{H}_x\left(t\right) \right\|^2_{\mathbf{P}_{xy}} + \left\|  \mathbf{P}\left(t\right) \right\|^2_{\mathbf{P}_{xy}}  + \int_0^t\mathbf{BT}_s\left(\mathrm{t}'\right)\mathrm{dt}',
  \end{split}
  \end{equation}
we have
\begin{theorem}\label{Theorem:Discrete_Stability2}
Consider the discrete approximation \eqref{eq:Maxwell_PML2_WaveGuide} for the physically motivated PML \eqref{eq:Maxwell_PML_Ababarnel_1}--\eqref{eq:Maxwell_PML_Ababarnel_4}   subject to the  boundary conditions  (\ref{eq:wall_bc_y_0})  at $y = \pm y_0$ and  (\ref{eq:wall_bc_x}) at $x = \pm \left(x_0+\delta\right) $. 
For 
\begin{itemize}
\item
 $R_x \ne -1$ and $R_y \ne -1$, and  
\[
\alpha_x = \frac{2}{1+R_x}, \quad \alpha_y = \frac{2}{1+R_y}, \quad \theta_x = \frac{2\bar{\theta}_x}{1+R_x}, \quad \theta_y =  \frac{2\bar{\theta}_y}{1+R_y},
\]
with 
\[
0\le \bar{\theta}_x\le \frac{4}{\gamma_x}, \quad 0\le \bar{\theta}_y\le \frac{4}{\gamma_y},
\]
or 
\item
  $|R_x| \le 1$ and $|R_y| \le 1$, and
\[
\alpha_x =  \alpha_y =  \theta_x =  \theta_y =  1,
\]
\end{itemize}
 then $\textbf{BT}_s(t) \ge 0$ and the quantity $  \mathcal{E}_{h}^{(2)}\left(t\right)$ defined in \eqref{eq:Energy_Discrete_first_Abarbanel} is an energy. For these sets of penalty parameters   the solutions of  the discrete PML problem \eqref{eq:Maxwell_PML2_WaveGuide}  satisfy the energy estimate
\begin{equation}\label{eq:EnergyEstimate_Discrete_Abarbanel}
\begin{split}
\frac{\mathrm{d}}{\mathrm{dt}}\sqrt{\mathcal{E}_{h}^{(2)}\left(t\right)}  \le & \sigma_{\infty}\sqrt{\mathcal{E}_{h}^{(2)}\left(t\right)} .
\end{split}
\end{equation}
\end{theorem}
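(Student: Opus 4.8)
The plan is to apply the discrete energy method to \eqref{eq:Maxwell_PML2_WaveGuide} and then reuse, essentially verbatim, the boundary-term analysis already carried out for the undamped scheme in section \ref{sec:SBP_SAT}. First I would multiply \eqref{eq:Maxwell_PML_Ababarnel_Discrete_11}--\eqref{eq:Maxwell_PML_Ababarnel_Discrete_41} from the left by $\mathbf{E}_z^T\mathbf{P}_{xy}$, $\mathbf{H}_y^T\mathbf{P}_{xy}$, $\mathbf{H}_x^T\mathbf{P}_{xy}$ and $\mathbf{P}^T\mathbf{P}_{xy}$ respectively, add the transposes of the products, and collect terms. The key structural observation is that the undifferentiated PML terms $-\sigma\mathbf{E}_z$, $-\sigma\mathbf{H}_y$ and $\sigma(\mathbf{H}_x-\mathbf{P})$ contribute only volume quantities and leave the boundary terms untouched; consequently the boundary term is exactly the same quadratic form $\textbf{BT}_s$ analysed in section \ref{sec:SBP_SAT}. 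This produces precisely the energy rate \eqref{eq:Discrete_Dissipation}.

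Next I would establish $\textbf{BT}_s(t)\ge 0$ for the two stated sets of penalty parameters. For $R_x\ne -1$ and $R_y\ne -1$, the boundary form decouples into sums of the $2\times 2$ quadratic forms associated with the matrices $A_x^{\pm}$ and $A_y^{\pm}$; these matrices have nonnegative eigenvalues exactly when $0\le\bar{\theta}_x\le 4/\gamma_x$ and $0\le\bar{\theta}_y\le 4/\gamma_y$, which is the hypothesis. For $\alpha_x=\alpha_y=\theta_x=\theta_y=1$ the form $\textbf{BT}_s$ is a manifest sum of squares, so $\textbf{BT}_s\ge 0$ is immediate. In either case $\int_0^t\textbf{BT}_s\,\mathrm{dt}'$ is nonnegative and nondecreasing, so every summand of $\mathcal{E}_{h}^{(2)}$ in \eqref{eq:Energy_Discrete_first_Abarbanel} is nonnegative and $\mathcal{E}_{h}^{(2)}(t)$ is a genuine energy, vanishing if and only if all the field variables vanish.

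I would then differentiate \eqref{eq:Energy_Discrete_first_Abarbanel}. Adding $\textbf{BT}_s(t)$ to the right-hand side of \eqref{eq:Discrete_Dissipation} cancels the $-\textbf{BT}_s(t)$ appearing there, leaving
\begin{equation*}
\frac{\mathrm{d}}{\mathrm{dt}}\mathcal{E}_{h}^{(2)}(t) = -2\sigma\|\mathbf{E}_z\|^2_{\mathbf{P}_{xy}} -2\sigma\|\mathbf{H}_y\|^2_{\mathbf{P}_{xy}} +2\sigma\|\mathbf{H}_x\|^2_{\mathbf{P}_{xy}} -2\sigma\|\mathbf{P}\|^2_{\mathbf{P}_{xy}}.
\end{equation*}
All terms except $+2\sigma\|\mathbf{H}_x\|^2_{\mathbf{P}_{xy}}$ are dissipative, so, using $\sigma\le\sigma_{\infty}$ together with the fact that $\|\mathbf{H}_x\|^2_{\mathbf{P}_{xy}}$ is one of the nonnegative summands of $\mathcal{E}_{h}^{(2)}$, I obtain $\frac{\mathrm{d}}{\mathrm{dt}}\mathcal{E}_{h}^{(2)}\le 2\sigma_{\infty}\|\mathbf{H}_x\|^2_{\mathbf{P}_{xy}}\le 2\sigma_{\infty}\mathcal{E}_{h}^{(2)}$. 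Dividing by $2\sqrt{\mathcal{E}_{h}^{(2)}}$ converts this into the stated estimate \eqref{eq:EnergyEstimate_Discrete_Abarbanel}, mirroring the continuous argument behind Theorem \ref{Theorem:Wellposedness_physical}.

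The main obstacle is structural rather than analytic: one must verify that the lower-order undifferentiated PML terms contribute nothing to the boundary terms, so that the positivity analysis of $\textbf{BT}_s$ from section \ref{sec:SBP_SAT} transfers without change and one can simply quote the eigenvalue conditions on $A_x^{\pm}$, $A_y^{\pm}$. The sign-indefinite volume term $+2\sigma\|\mathbf{H}_x\|^2_{\mathbf{P}_{xy}}$ is exactly why only a Gronwall-type bound, admitting growth at rate $e^{\sigma_{\infty} t}$, rather than strict dissipation, is available here; excluding genuine exponential growth at constant coefficients requires the separate Laplace-space argument of Theorem \ref{Theorem:Wellposedness_Laplace}.
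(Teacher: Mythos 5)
Your proposal is correct and follows essentially the same route the paper takes: apply the discrete energy method to obtain \eqref{eq:Discrete_Dissipation}, note that the undifferentiated PML terms leave the boundary quadratic form $\textbf{BT}_s$ identical to the undamped case so the eigenvalue conditions on $A_x^{\pm}$, $A_y^{\pm}$ from section \ref{sec:SBP_SAT} apply verbatim, and then absorb the single indefinite volume term $2\sigma\|\mathbf{H}_x\|^2_{\mathbf{P}_{xy}}\le 2\sigma_{\infty}\mathcal{E}_{h}^{(2)}$ into a Gronwall-type bound for $\sqrt{\mathcal{E}_{h}^{(2)}}$. Your closing remarks on why only a growth-admitting estimate is obtainable here, and on the separate Laplace-space argument needed to exclude exponential growth, match the paper's discussion as well.
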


By \eqref{eq:EnergyEstimate_Discrete_Abarbanel},  the discrete energy can grow exponentially in time. However, the modal analysis presented in section \ref{section:modal_analysis} shows  that  the continuous PML problem  \eqref{eq:Maxwell_PML_Ababarnel_1}--\eqref{eq:Maxwell_PML_Ababarnel_4}   subject to the  boundary conditions  (\ref{eq:wall_bc_y})  at $y = \pm y_0$ and  (\ref{eq:wall_bc_x}) at $x = \pm \left(x_0+\delta\right) $ does not support temporally growing modes. Therefore, in the discrete setting, it will be more desirable to  design  numerical methods that ensure that all eigenvalues of the discrete spatial operator satisfy  $\Re{\lambda} \le 0$ for any $h > 0$. This  has the potential to  eliminate any nonphysical  temporal growth. At constant coefficients we will show that the  corresponding discretization used in  section \ref{sect:num_example} can not allow growing modes
\subsubsection{Energy estimates in the Laplace space}
Consider the discrete approximation \eqref{eq:Maxwell_PML2_WaveGuide} with $R_x = R_y = 0$ and 
\[
\alpha_x = 2, \quad \alpha_y = 2, \quad \theta_x = 0, \quad \theta_y =  0,
\]
we have
\begin{subequations}\label{eq:Maxwell_PML2_WaveGuide_Stable}
    \begin{alignat}{2}
       \frac{\mathrm{d}{\mathbf{E}_z}}{\mathrm{d t}} &= -\left(D_x \otimes I_y\right)\mathbf {H}_y + \left(I_x \otimes D_y\right)\mathbf {H}_x  - \sigma \mathbf {E}_z 
      \underbrace{-\left(\left(\mathrm{P}_x^{-1}\left(E_{Rx}+E_{Lx}\right)\otimes I_y \right)\mathbf {E}_z  
 - \left(\mathrm{P}_x^{-1}\left(E_{Rx}-E_{Lx}\right)\otimes I_y \right)\mathbf {H}_y\right)}_{\mathrm{SAT}_x} \notag \\
 & \underbrace{- \left( \left(I_x \otimes \mathrm{P}_y^{-1}\left(E_{Ry}+E_{Ly}\right)\right)\mathbf {E}_z +\left(I_x \otimes \mathrm{P}_y^{-1}\left(E_{Ry}-E_{Ly}\right)\right)\mathbf {H}_x\right)}_{\mathrm{SAT}_y},
 \label{eq:Maxwell_PML_Ababarnel_Discrete_11_Stable}\\
      \frac{\mathrm{d}{\mathbf{H}_y}}{\mathrm{d t}}  &= -\left(D_x \otimes I_y\right)\mathbf {E}_z- \mathbf{\sigma}\mathbf {H}_y  ,
  \label{eq:Maxwell_PML_Ababarnel_Discrete_21_Stable} \\
     \frac{\mathrm{d}{\mathbf{H}_x}}{\mathrm{d t}}  &= \left(I_x \otimes D_y\right)\mathbf {E}_z +\sigma \left(\mathbf {H}_x - \mathbf {P}\right) ,
      \notag \\ \label{eq:Maxwell_PML_Ababarnel_Discrete_31_Stable} \\ 
   \frac{\mathrm{d}{\mathbf{P}}}{\mathrm{d t}}   &= \sigma \left(\mathbf {H}_x - \mathbf {P}\right).\label{eq:Maxwell_PML_Ababarnel_Discrete_41_Stable}
    \end{alignat}
  \end{subequations}
  
 After Laplace transformation of \eqref{eq:Maxwell_PML2_WaveGuide_Stable}  in time, we can   eliminate the auxiliary variable and the magnetic fields  obtaining
    \begin{align}\label{eq:Maxwell_PML_WaveGuide_Discrete_Laplace_Abarbanel}
      s^2\widehat{\mathbf{E}}_z  &= -\frac{1}{S_x^2}\mathbf{P}_{xy}^{-1}\left(D_x^T \otimes I_y\right)\mathbf{P}_{xy}\left(\left(D_x \otimes I_y\right) \widehat{\mathbf{E}}_z\right) - \mathbf{P}_{xy}^{-1}\left( I_x\otimes D_y^T \right)\mathbf{P}_{xy}\left(\left(I_x \otimes D_y\right) \widehat{\mathbf{E}}_z\right)  \notag \\ 
  &  -  \frac{s}{S_x} \mathbf{P}_{xy}^{-1}\left(\left(E_{Rx}+E_{Lx}\right)\otimes \mathrm{P}_y + \left(\mathrm{P}_x \otimes\left(E_{Ry}+E_{Ly}\right)\right)\right) \widehat{\mathbf{E}}_z, 
    \end{align}
 where $S_x = 1 + \sigma/s$ and $\Re{s} > 0$. As in \eqref{eq:Maxwell_PML_WaveGuide_Discrete_second_Laplace}, note also that we have used the SBP property \eqref{eq:SBPDx} to eliminate some of the SAT boundary terms.
 We define the corresponding discrete energy
 \begin{align}
\widehat{\mathcal{E}}^{(2)}_{h\sigma}(s) &= \Re{s}|s|^2\widehat{\mathbf{E}}_z^*\mathbf{P}_{xy}\widehat{\mathbf{E}}_z + \frac{1}{|S_x|^2}\Re{\left(\frac{(sS_x)^*}{S_x}\right)}\left(\left(D_x \otimes I_y\right)\widehat{\mathbf{E}}_z\right)^*\mathbf{P}_{xy} \left(\left(D_x \otimes I_y\right)\widehat{\mathbf{E}}_z\right) \notag\\
&+ \Re{s}\left(\left(I_x \otimes D_y\right)\widehat{\mathbf{E}}_z\right)^*\mathbf{P}_{xy} \left(\left(I_x \otimes D_y\right)\widehat{\mathbf{E}}_z\right)
  +|s|^2\Re{\left(\frac{1}{S_x}\right)} \widehat{\mathbf{E}}_z^*\left(\left(E_{Rx}+E_{Lx}\right)\otimes \mathrm{P}_y + \left(\mathrm{P}_x \otimes \left(E_{Ry}+E_{Ly}\right)\right)\right) \widehat{\mathbf{E}}_z .
 \end{align}
Note that  by lemma \ref{Lem:Duru3} in the Appendix, we have
\[
\Re{s} > 0 \implies \Re{\left(\frac{(sS_x)^*}{S_x}\right)},\Re{\left(\frac{1}{S_x}\right)},  > 0.
\] 
 Therefore, $\widehat{\mathcal{E}}^{(2)}_{h\sigma}(s)  > 0$ is an energy.
We can now prove the following result establishing the stability of the discretization \eqref{eq:Maxwell_PML2_WaveGuide_Stable}.
 \begin{theorem}\label{theo:asymptotic_stability_Abarbanel}
 Consider the constant coefficient semi--discrete PML \eqref{eq:Maxwell_PML_WaveGuide_Discrete_Laplace_Abarbanel} with $\Re{s} > 0$, $\sigma \ge 0$. All solutions of \eqref{eq:Maxwell_PML_WaveGuide_Discrete_Laplace_Abarbanel} satisfy the energy equation
 \begin{align}\label{eq:asymptotic_stability_Abarbanel}
 \widehat{\mathcal{E}}^{(2)}_{h\sigma}(s) = 0.
 \end{align}
 \end{theorem}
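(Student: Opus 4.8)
The plan is to imitate the proof of Theorem~\ref{theo:asymptotic_stability} almost verbatim, exploiting the fact that the reduced Laplace equation \eqref{eq:Maxwell_PML_WaveGuide_Discrete_Laplace_Abarbanel} for the physically motivated PML carries \emph{none} of the $\theta$-dependent undifferentiated terms that complicated \eqref{eq:Maxwell_PML_WaveGuide_Discrete_second_Laplace}. First I would multiply \eqref{eq:Maxwell_PML_WaveGuide_Discrete_Laplace_Abarbanel} from the left by $\left(s\widehat{\mathbf{E}}_z\right)^{*}\mathbf{P}_{xy}$ and add the complex conjugate of the resulting scalar identity. The left-hand side term $s^{2}\widehat{\mathbf{E}}_z$ produces $|s|^{2}s\,\widehat{\mathbf{E}}_z^{*}\mathbf{P}_{xy}\widehat{\mathbf{E}}_z$, whose symmetrization is the first summand $\Re{s}\,|s|^{2}\widehat{\mathbf{E}}_z^{*}\mathbf{P}_{xy}\widehat{\mathbf{E}}_z$ of $\widehat{\mathcal{E}}^{(2)}_{h\sigma}(s)$, exactly as the analogous term appeared in \eqref{eq:asymp_1}.

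Next I would treat the two differentiated terms using the SBP factorization \eqref{eq:SBPDx}, the symmetry of $\mathbf{P}_{xy}=\mathrm{P}_x\otimes\mathrm{P}_y$, and the Kronecker identities of Definition~\ref{def:section26} to transfer the transposed operators across the discrete inner product. The cancellation $\widehat{\mathbf{E}}_z^{*}\mathbf{P}_{xy}\mathbf{P}_{xy}^{-1}\left(D_x^{T}\otimes I_y\right)\mathbf{P}_{xy}\left(\left(D_x\otimes I_y\right)\widehat{\mathbf{E}}_z\right)=\left\|\left(D_x\otimes I_y\right)\widehat{\mathbf{E}}_z\right\|^{2}_{\mathbf{P}_{xy}}$ turns the $x$-derivative term into $-\left(s^{*}/S_x^{2}\right)\left\|\left(D_x\otimes I_y\right)\widehat{\mathbf{E}}_z\right\|^{2}_{\mathbf{P}_{xy}}$; since these quadratic forms are real ($\mathbf{P}_{xy}$ is symmetric positive definite) and $\tfrac{1}{|S_x|^{2}}\Re\!\left(\tfrac{(sS_x)^{*}}{S_x}\right)=\Re\!\left(\tfrac{s^{*}}{S_x^{2}}\right)$, adding the conjugate reproduces exactly the second summand of $\widehat{\mathcal{E}}^{(2)}_{h\sigma}(s)$. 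The $y$-derivative term is handled identically and yields the $\Re{s}\left\|\left(I_x\otimes D_y\right)\widehat{\mathbf{E}}_z\right\|^{2}_{\mathbf{P}_{xy}}$ contribution.

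Finally, the single remaining penalty term, built on the symmetric positive semidefinite boundary matrix $\left(E_{Rx}+E_{Lx}\right)\otimes\mathrm{P}_y+\mathrm{P}_x\otimes\left(E_{Ry}+E_{Ly}\right)$ with prefactor $-s/S_x$, contributes $|s|^{2}\Re\!\left(\tfrac{1}{S_x}\right)$ times that form, which is the last summand of $\widehat{\mathcal{E}}^{(2)}_{h\sigma}(s)$. Collecting the four pieces, the entire right-hand side is absorbed into the energy and the identity closes as $\widehat{\mathcal{E}}^{(2)}_{h\sigma}(s)=0$, which is \eqref{eq:asymptotic_stability_Abarbanel}. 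Lemma~\ref{Lem:Duru3} ensures $\Re\!\left(\tfrac{(sS_x)^{*}}{S_x}\right)>0$ and $\Re\!\left(\tfrac{1}{S_x}\right)>0$ whenever $\Re{s}>0$, so $\widehat{\mathcal{E}}^{(2)}_{h\sigma}$ is a genuine energy and the equation forces $\widehat{\mathbf{E}}_z=0$, i.e.\ no nontrivial discrete mode grows.

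The only real hazard is the algebraic bookkeeping in transferring the transposed Kronecker factors through $\mathbf{P}_{xy}$ and verifying that the complex scalar prefactors $s^{*}/S_x^{2}$, $s^{*}$, and $1/S_x$ combine with their conjugates to give precisely the real parts appearing in $\widehat{\mathcal{E}}^{(2)}_{h\sigma}(s)$. I expect no deeper obstacle: in contrast to the modal case there is no surviving cross term, so no analogue of the restriction $\theta=1$ is required and the energy equation closes unconditionally for every $\sigma\ge 0$ and every mesh width $h>0$.
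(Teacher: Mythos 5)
Your proposal is correct and is essentially the proof the paper intends: the paper omits it as ``completely analogous'' to the proof of Theorem~\ref{theo:asymptotic_stability}, and your computation is exactly that analogue, including the correct identity $\frac{1}{|S_x|^{2}}\Re\left(\frac{(sS_x)^{*}}{S_x}\right)=\Re\left(\frac{s^{*}}{S_x^{2}}\right)$ and the key observation that the indefinite cross term which forced $\theta=1$ in the modal case is absent from \eqref{eq:Maxwell_PML_WaveGuide_Discrete_Laplace_Abarbanel}, so the energy identity closes unconditionally.
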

 The proof of theorem \ref{theo:asymptotic_stability_Abarbanel} is completely analogous to the proof of theorem \ref{theo:asymptotic_stability}. We therefore omit it here.

Note that theorem \ref{theo:asymptotic_stability_Abarbanel} shows that the discrete PML problem \eqref{eq:Maxwell_PML_WaveGuide_Discrete_Laplace_Abarbanel} can not have eigenvalues with positive real parts. However, for efficient explicit time integration we do not only need the temporal eigenvalues to have non positive real parts,   the spectral radius, $\max_i{|\lambda_i|}$, must also be sufficiently small. Note that in  section \ref{sect:num_example},   the penalty parameters
\[
\alpha_x =  \alpha_y = 2, \quad \theta_x =  \theta_y =  0, \quad \text{with} \quad R_x = R_y = 0,
\]
when combined with the physically motivated PML yielded unstable solutions for the time step $dt = 0.4h$. However, when we reduced the time step  to $dt = 0.2h$ the numerical solutions are stable.
This shows that the above penalty parameters for the physically motivated PML is not adequate for efficient explicit time integration. 

It is possible though to construct stable boundary procedures which will not restrict the time step further when the PML is introduced. For the physically motivated PML we propose to use the stable penalty parameters,
\[
\alpha_x =  \alpha_y =  \theta_x =  \theta_y =  1.
\]
Note that the maximum weights of these penalty parameters are smaller. For many choices of boundary parameters $|R_x|, |R_y| \le 1$, this has the potential to reduce numerical stiffness in the discrete PML.
  Numerical experiments performed in the next section confirm this.
\section{Numerical experiments}
In this section, we present some numerical tests  to verify the analysis of  previous sections.  In particular, we will  investigate temporal stability of the discrete PMLs, \eqref{eq:Maxwell_PML_WaveGuide_Discrete_1}--\eqref{eq:Maxwell_PML_WaveGuide_Discrete_4},  \eqref{eq:Maxwell_PML_Ababarnel_Discrete_11}--\eqref{eq:Maxwell_PML_Ababarnel_Discrete_41}, \eqref{eq:Maxwell_StableSplitFieldPML1}--\eqref{eq:Maxwell_StableSplitFieldPML2},  and also evaluate the accuracy of the PML,    in  an electromagnetic waveguide.

To begin with, we will repeat the experiments of section 3.  We consider exactly the same setup as in section 3. Also,  the same discretization parameters,  time step  $\mathrm{dt} = 0.4h$, absorption function and damping coefficient are used. We run the simulations now for a very long time, until $t = 50000$ (125 000 time steps), using the discrete  PMLs, \eqref{eq:Maxwell_PML_WaveGuide_Discrete_1}--\eqref{eq:Maxwell_PML_WaveGuide_Discrete_4} with $\theta = 1$, \eqref{eq:Maxwell_PML_Ababarnel_Discrete_11}--\eqref{eq:Maxwell_PML_Ababarnel_Discrete_41} with $\alpha_x =  \alpha_y =  \theta_x =  \theta_y =  1$ and \eqref{eq:Maxwell_StableSplitFieldPML1}--\eqref{eq:Maxwell_StableSplitFieldPML2},  respectively. The time histories of the $l_2$-norm of the solutions are recorded in figures \ref{fig:Model1_Stable}--\ref{fig:Model3_Stable}   for the discrete PMLs  \eqref{eq:Maxwell_PML_WaveGuide_Discrete_1}--\eqref{eq:Maxwell_PML_WaveGuide_Discrete_4}, \eqref{eq:Maxwell_PML_Ababarnel_Discrete_11}--\eqref{eq:Maxwell_PML_Ababarnel_Discrete_41} and \eqref{eq:Maxwell_StableSplitFieldPML1}--\eqref{eq:Maxwell_StableSplitFieldPML2}, respectively. Clearly, the solutions decay with time throughout the simulation. This verifies the analysis of sections 4 and 5. The numerical growth seen in section 3 are due to inappropriate boundary treatments in the PML. We also believe that some of the growth seen in 
\cite{Abarbanel2002, Abarbanel2009}  are probably caused by  numerical enforcements of boundary conditions.

  It is noteworthy that the behaviors of the discrete PMLs, \eqref{eq:Maxwell_PML_WaveGuide_Discrete_1}--\eqref{eq:Maxwell_PML_WaveGuide_Discrete_4} and \eqref{eq:Maxwell_StableSplitFieldPML1}--\eqref{eq:Maxwell_StableSplitFieldPML2}, are completely analogous.

\begin{figure} [h!]
 \centering
\subfigure[Discrete modal PML \eqref{eq:Maxwell_PML_WaveGuide_Discrete_1}--\eqref{eq:Maxwell_PML_WaveGuide_Discrete_4} ]{\includegraphics[width=0.49\linewidth]{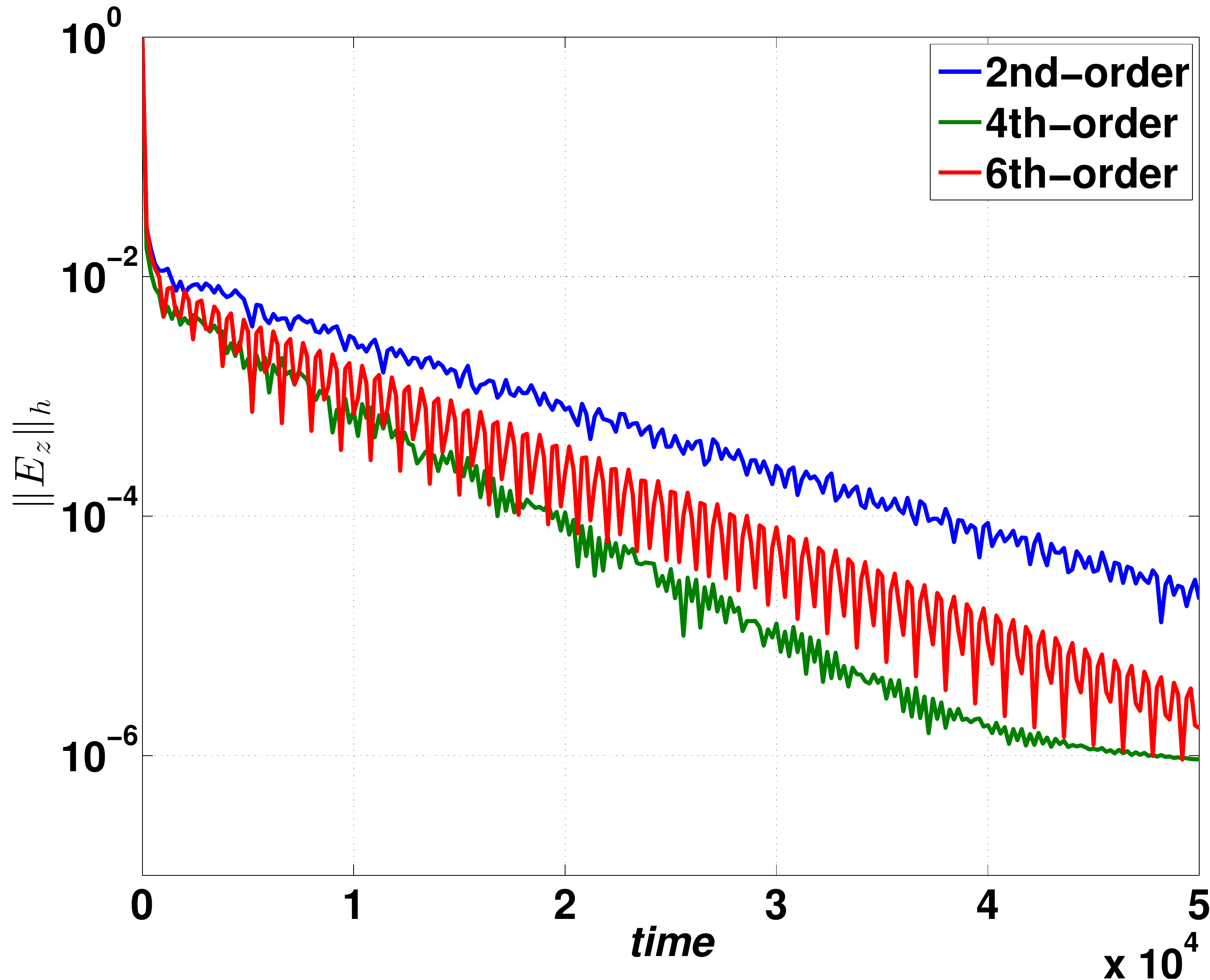}\label{fig:Model1_Stable}}
\subfigure[Discrete physically motivated PML \eqref{eq:Maxwell_PML_Ababarnel_Discrete_11}--\eqref{eq:Maxwell_PML_Ababarnel_Discrete_41}]{\includegraphics[width=0.49\linewidth]{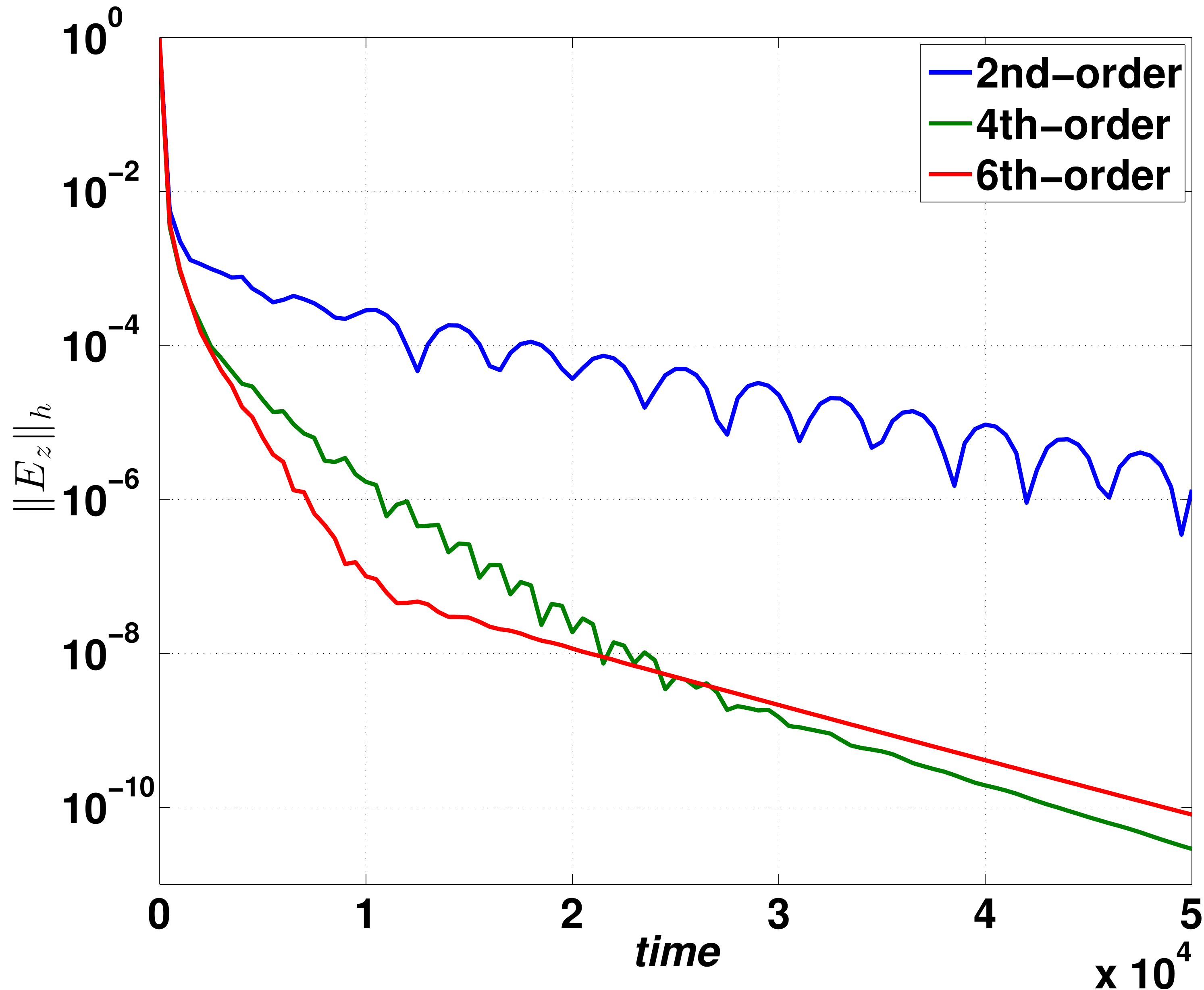}\label{fig:Model2_Stable}}
\subfigure[Discrete split--field PML \eqref{eq:Maxwell_StableSplitFieldPML1}--\eqref{eq:Maxwell_StableSplitFieldPML2} ]{\includegraphics[width=0.49\linewidth]{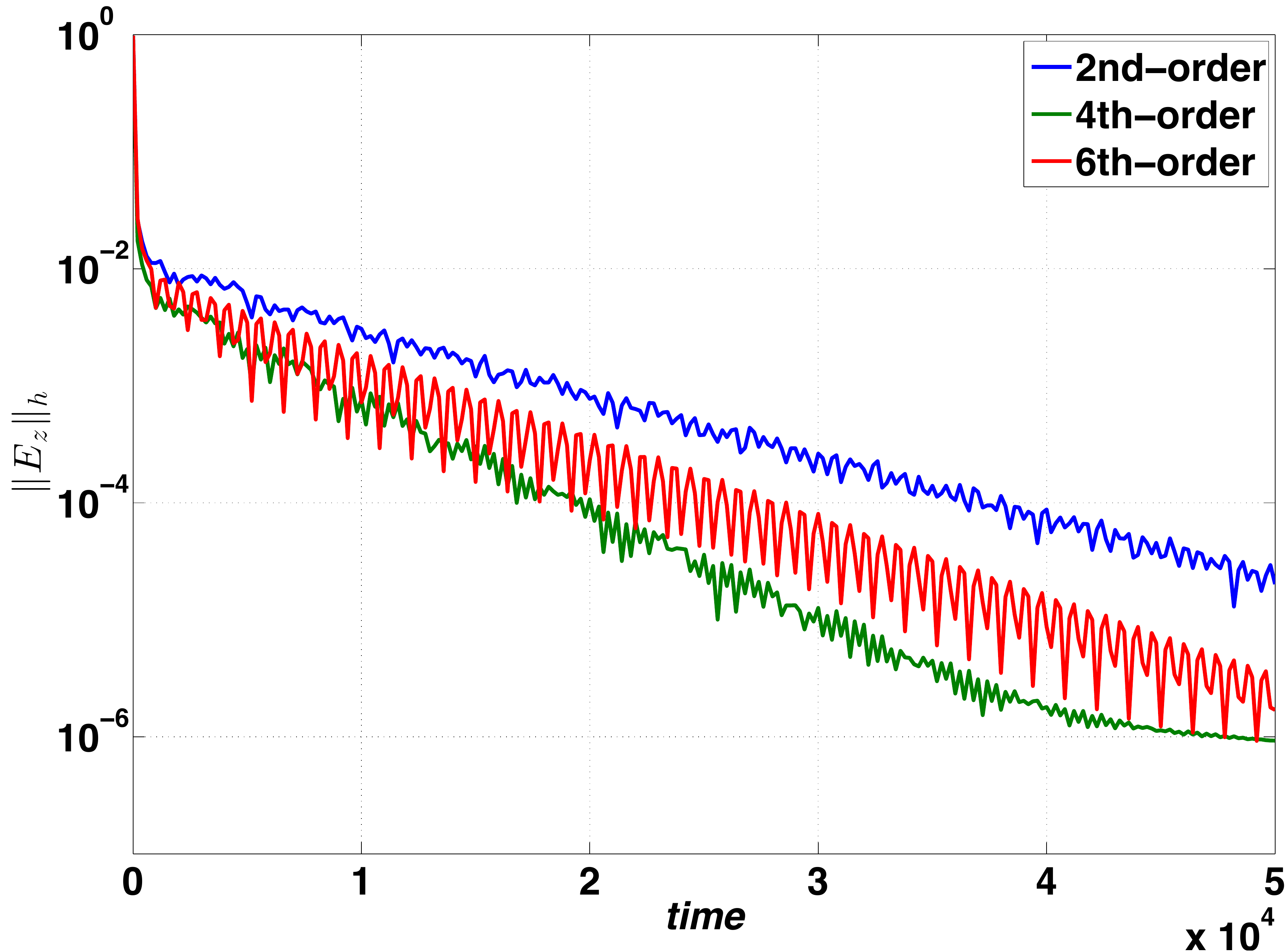}\label{fig:Model3_Stable}}
 \caption{\textit{The $l_2$-norm of the electric fields as a function of time}}
 \label{fig:L2Norm_waveguide}
\end{figure}

 We have also performed numerous numerical experiments with different boundary parameters, $R_x, R_y$ and penalty parameters $\alpha_x, \alpha_y,\theta_x, \theta_y$. Unfortunately, because of space,  we can not report all of them here.
 It is particularly  important to note that applying the penalty $\alpha_x =  \alpha_y =  \theta_x =  \theta_y =  1$ to the discrete modal PML  \eqref{eq:Maxwell_PML_WaveGuide_Discrete_11_Stable_0}--\eqref{eq:Maxwell_PML_WaveGuide_Discrete_41_Stable_0}, with $\theta = 0$, yields unstable solutions. Numerical growth was also observed when the stable boundary treatment  used in  \eqref{eq:Maxwell_PML_WaveGuide_Discrete_1}--\eqref{eq:Maxwell_PML_WaveGuide_Discrete_4} was applied to the physically motivated PML \eqref{eq:Maxwell_PML_Ababarnel_1}--\eqref{eq:Maxwell_PML_Ababarnel_4} without decreasing the time step further. This indicates that each PML model must be treated individually. A stable and efficient numerical boundary treatment for a particular PML model may not necessarily yield stable and efficient numerical scheme for another PML modeling the same physical phenomena. Of course, when the underlying physical model changes, the corresponding PML model will also change. We will also expect different  stable numerical boundary treatments.
 
  In the next section, we will verify the stability and accuracy of the PML in an electromagnetic waveguide.
\subsection{Electromagnetic waveguides}
Here, we will focus on the discrete PML \eqref{eq:Maxwell_PML_WaveGuide_Discrete_1}--\eqref{eq:Maxwell_PML_WaveGuide_Discrete_4}. The results obtained for the discrete PMLs,   \eqref{eq:Maxwell_PML_Ababarnel_Discrete_11}--\eqref{eq:Maxwell_PML_Ababarnel_Discrete_41} and \eqref{eq:Maxwell_StableSplitFieldPML1}--\eqref{eq:Maxwell_StableSplitFieldPML2}, are analogous, and will not be presented.
Consider the rectangular semi--infinite electromagnetic waveguide $-2 \le x  < \infty$,  $-1 \le y \le 1$. At the upper wall, $y = 1,$ of the waveguide, the boundary condition is a magnetic source
$H_x (x, 1, t)= F(x, 1, t)$,  where 
\begin{equation}\label{eq:Forcing} 
F(x, y, t) =  \exp(-\pi^2(f_0t- 1)^2) g(x, y),
\end{equation}
and
\[ 
 g(x, y) = \exp\left(-\left(\left(x - 1\right)^2 + \left(y-1\right)^2\right)/0.01\right), \quad f_0 = 10.
\]
 At the lower wall, $y = -1,$ the boundary condition is homogeneous $H_x (x,  -1, t)= 0$. At the left wall, $x = -2$, we impose the characteristic boundary condition $E_z\left(-2 ,y, t\right) + H_y \left(-2, y, t\right)= 0$. In order to perform numerical simulations, we truncate the right boundary at $x = 2$ with a PML of width $\delta = 0.4$. 
 We terminate the PML at $x = 2 + \delta$ with the characteristic boundary condition $E_z(2 + \delta ,y, t) - H_y (2 + \delta, y, t)= 0$. Homogeneous initial conditions 
 \begin{equation}\label{eq:initialdata} 
E_z(x, y, 0) = H_x(x, y, 0) = H_y(x, y, 0) = H_x^*(x, y, 0) = 0,
\end{equation}
   are used for all variables. We also use the cubic monomial  damping profile 
\begin{equation}\label{eq:Damping_xx}
\begin{split}
&\sigma(x) = \left \{
\begin{array}{rl}
0 \quad {}  \quad {}& \text{if} \quad |x| \le 2,\\
d_0\Big(\frac{|x|-2}{\delta}\Big)^3  & \text{if}  \quad |x| \ge 2,
\end{array} \right.
\end{split}
\end{equation}
   with   the damping coefficient   $d_0 =  4/(2\times \delta )\log(1/(10^{-4}h)^2)$, where the factor $10^{-4}$ is empirically determined and $h>0$ is the spatial step.

We approximate all spatial derivatives using SBP operators with interior order of accuracy 2, 4, 6.   Boundary conditions are imposed weakly using penalties.  As before, we discretize in time  using the classical 4th order accurate Runge--Kutta method. 
\subsection{Numerical stability}
In the first sets of experiments we investigate discrete stability. Consider  SBP finite difference operators of order of accuracy  2, 4, 6, respectively.  We use the spatial step $h = 0.02$ and the time step $dt = 0.4h$, and evolve the solutions until $t = 2000$ (250 000 time steps) with $\theta = 0, 1$, respectively, in \eqref{eq:Maxwell_PML_WaveGuide_Discrete_4}.  Note that here the domain,  spatial and time steps are much   smaller than the ones used in the previous sections. The snapshots of the solutions at $t = 300$ are shown in figure \ref{fig:slution_t300}, for the  4th order accurate SBP operator. 
\begin{figure} [h!]
 \centering
\subfigure[$\theta = 0$]{\includegraphics[width=.475\linewidth]{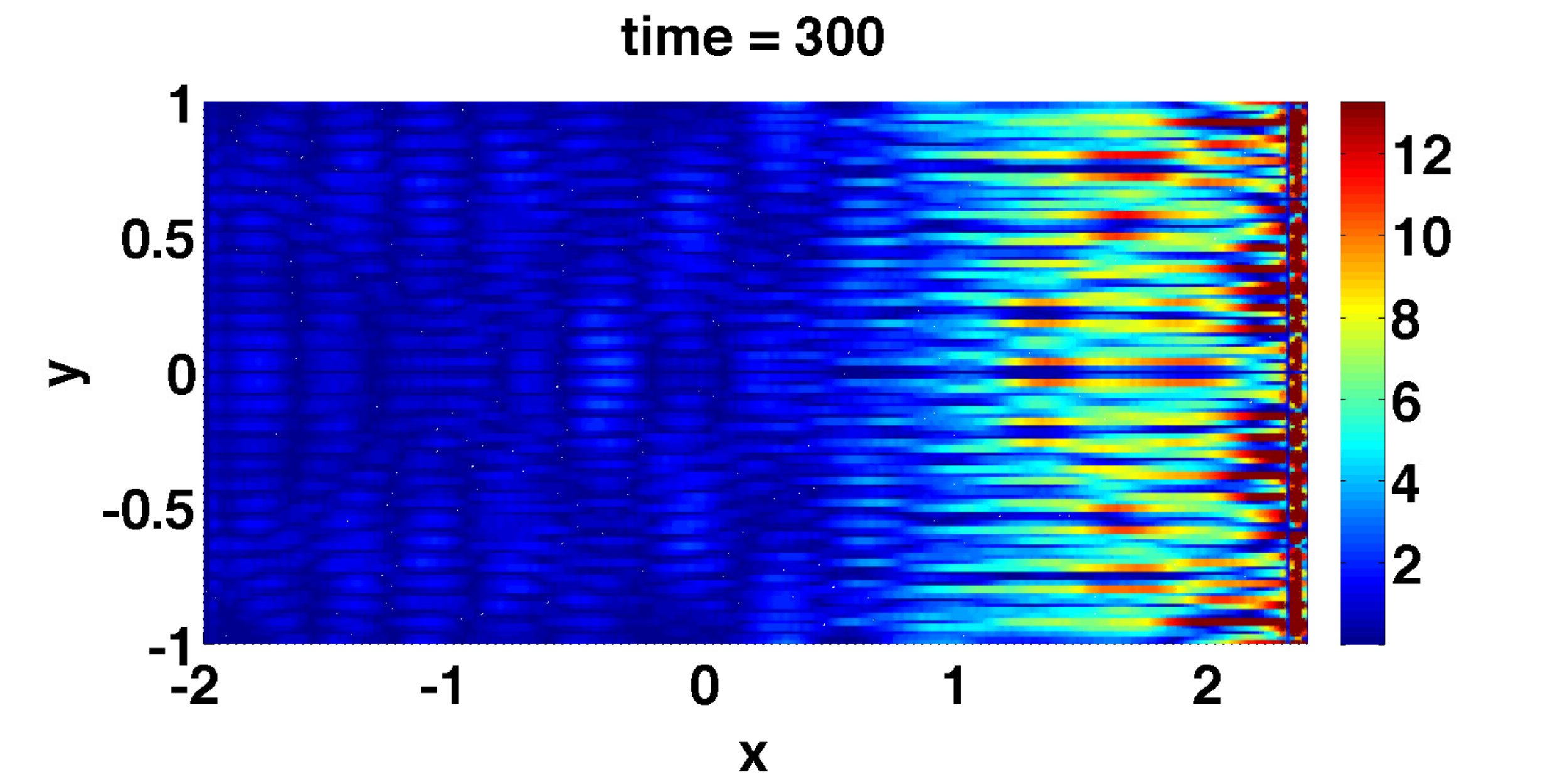} \label{fig:unstable_solution_t300_4th}} 
\subfigure[$\theta = 1$]{\includegraphics[width=.475\linewidth]{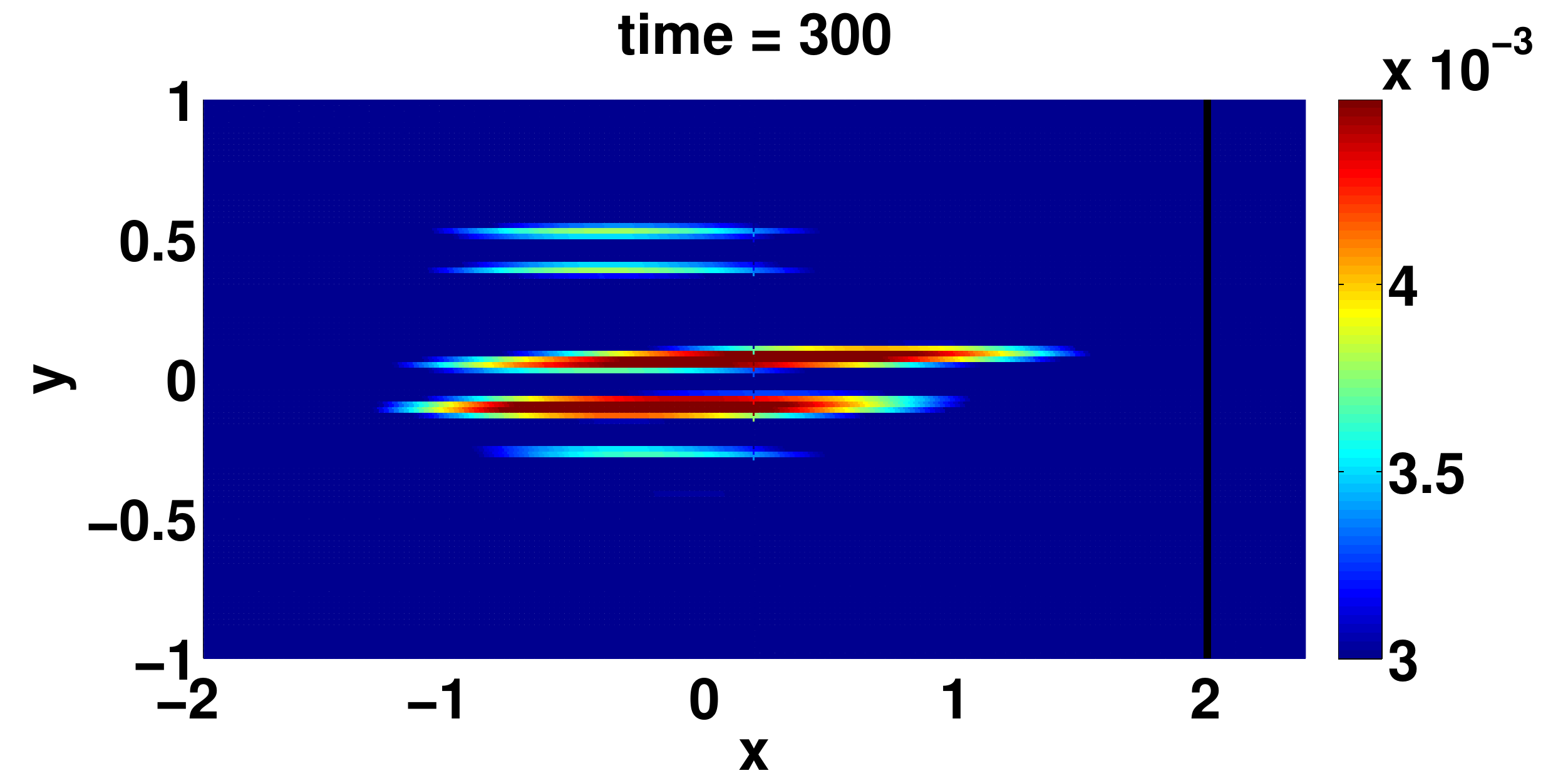} \label{fig:stable_solution_t300_4th}} 
 \caption{\textit{The snapshot of the solutions at $t = 300$.}}
 \label{fig:slution_t300}
\end{figure}

Note that initially the two solutions, using $\theta = 0, 1$, are similar.   For the standard choice, $\theta = 0$, the solution in the PML starts growing after a longtime, just before $t = 200$, and spreads into the computational domain as time passes.  At $t = 300$, growth in the PML has  already corrupted the solutions everywhere.  The 2nd and 6th order accurate operators with $\theta = 0$ are also unstable, but with slower growth rates. On the other hand, for $\theta = 1,$ the solutions decay throughout the simulation for all SBP operators. It is important to remark that  the  observed growth  is due to the unstable boundary treatments in the PML. 

The conclusion here is that a numerical method which is stable  in the absence of the PML, when $\sigma = 0$, can result in an unstable scheme when, $\sigma > 0$, the PML is included. It is possible though to design stable numerical methods which can be proven stable for all $\sigma \ge 0.$ Here,  numerical stability for the discrete PML \eqref{eq:Maxwell_PML_WaveGuide_Discrete_1}--\eqref{eq:Maxwell_PML_WaveGuide_Discrete_4} was achieved by using $\theta = 1$ in equation  \eqref{eq:Maxwell_PML_WaveGuide_Discrete_4} without any additional cost or difficulty.

\subsection{Accuracy}
Next, we evaluate the accuracy of the PML. We set $\theta = 1$ and compute the solutions for various resolutions using SBP operators of interior order of accuracy 2, 4 and 6,   until  the final time, $t = 5.$ The snapshots of the solutions at $t = 1,  2, 3, 4$ are displayed in figure \ref{fig:Maxwell_Solutions_Waveguide}, showing the evolution of the electric field on the boundary, the propagation and reflection of waves in the waveguide, and the absorption of waves by the PML.  

The errors introduced when a PML is used in computations can be divided into two different categories: numerical reflections and the modeling error. Numerical reflections are discrete effects introduced by discretizing the PML and seen inside the computational domain. The modeling error is introduced because the layer and the magnitude of damping coefficient are finite. Numerical reflections should converge to zero as the mesh is refined. Note that the modeling error is not caused by numerical approximations, and thus is independent of the numerical method used.  The modeling error  is expected to decrease as the PML width or the magnitude of damping coefficient increases. For sufficiently small mesh sizes numerical reflections are infinitesimally small and the modeling error dominates the total PML error.

In order to measure numerical errors, we also computed a reference solution in  a larger domain without the PML.   By comparing the reference solutions to the PML solutions in the interior, measured in the maximum norm, we obtain accurate measure of total PML errors. The numerical errors at $t = 5$ are displayed in table \ref{tab:Error_PML_Waveguide}, and the time history of the error are shown in figure \ref{fig:PMLError_WaveGuide}. 

Note that the errors converge to zero exponentially. Because we have used exactly the same  damping strength, $d_0$, for the 2nd, 4th and 6th order accurate schemes we expect to have the same  modeling error independent of the accuracy of the scheme used. It is also important to note that for small mesh--sizes the modeling error dominates and  the behavior of the total PML error does not depend on the order of accuracy of the scheme.
\begin{figure} [h!]
 \centering
{\includegraphics[width=.49\linewidth]{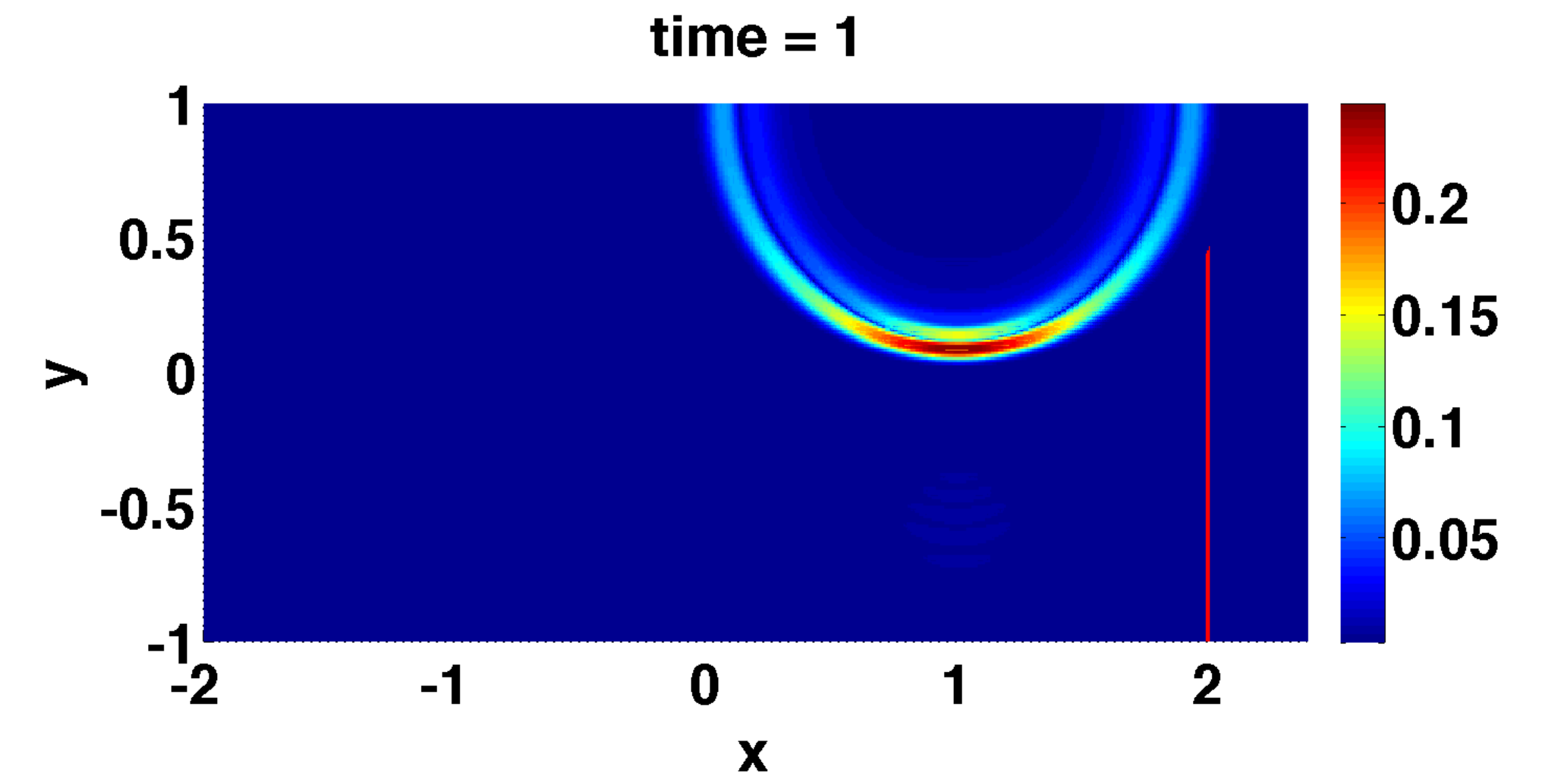}}
{\includegraphics[width=.49\linewidth]{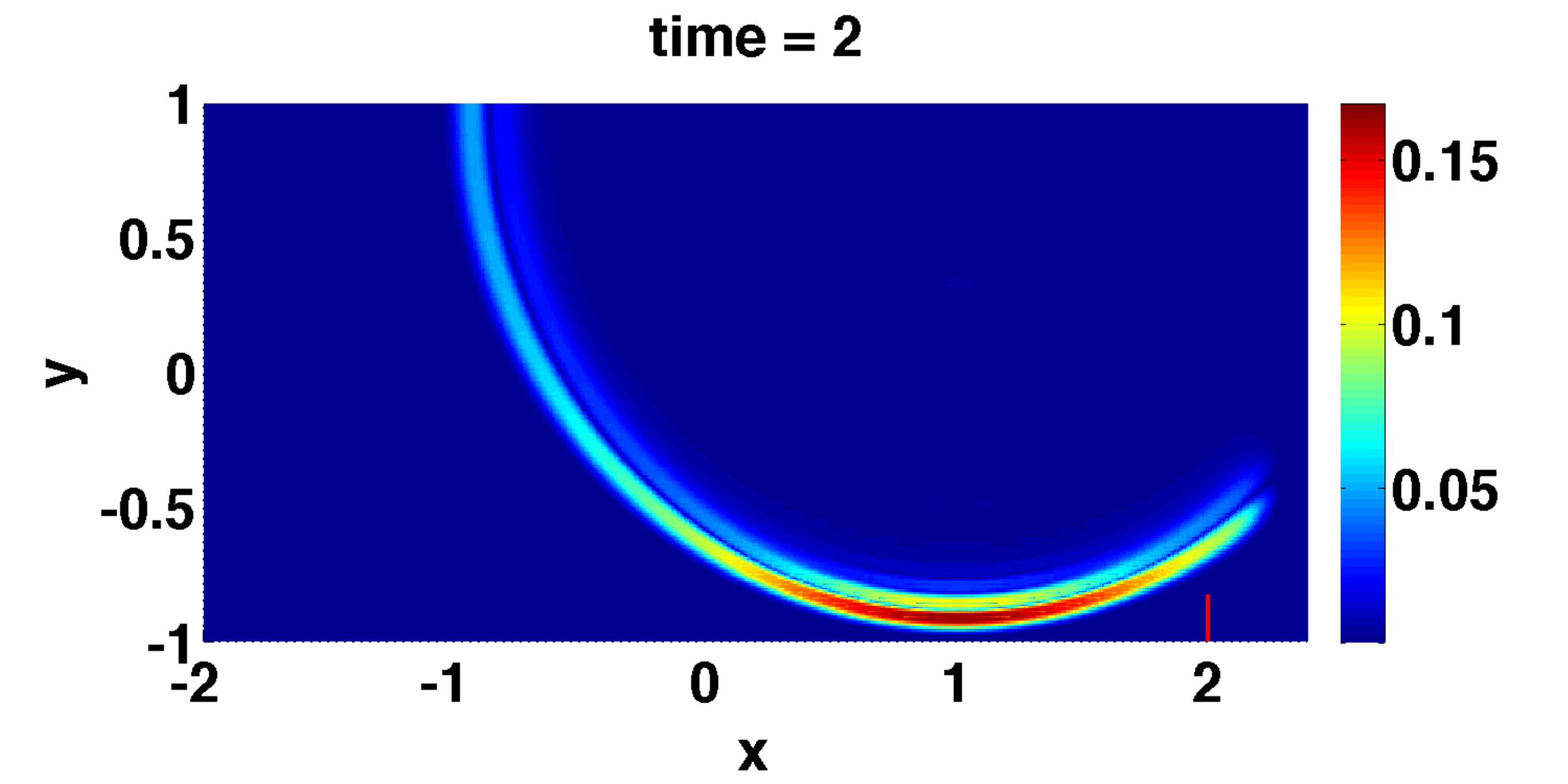}}
{\includegraphics[width=.49\linewidth]{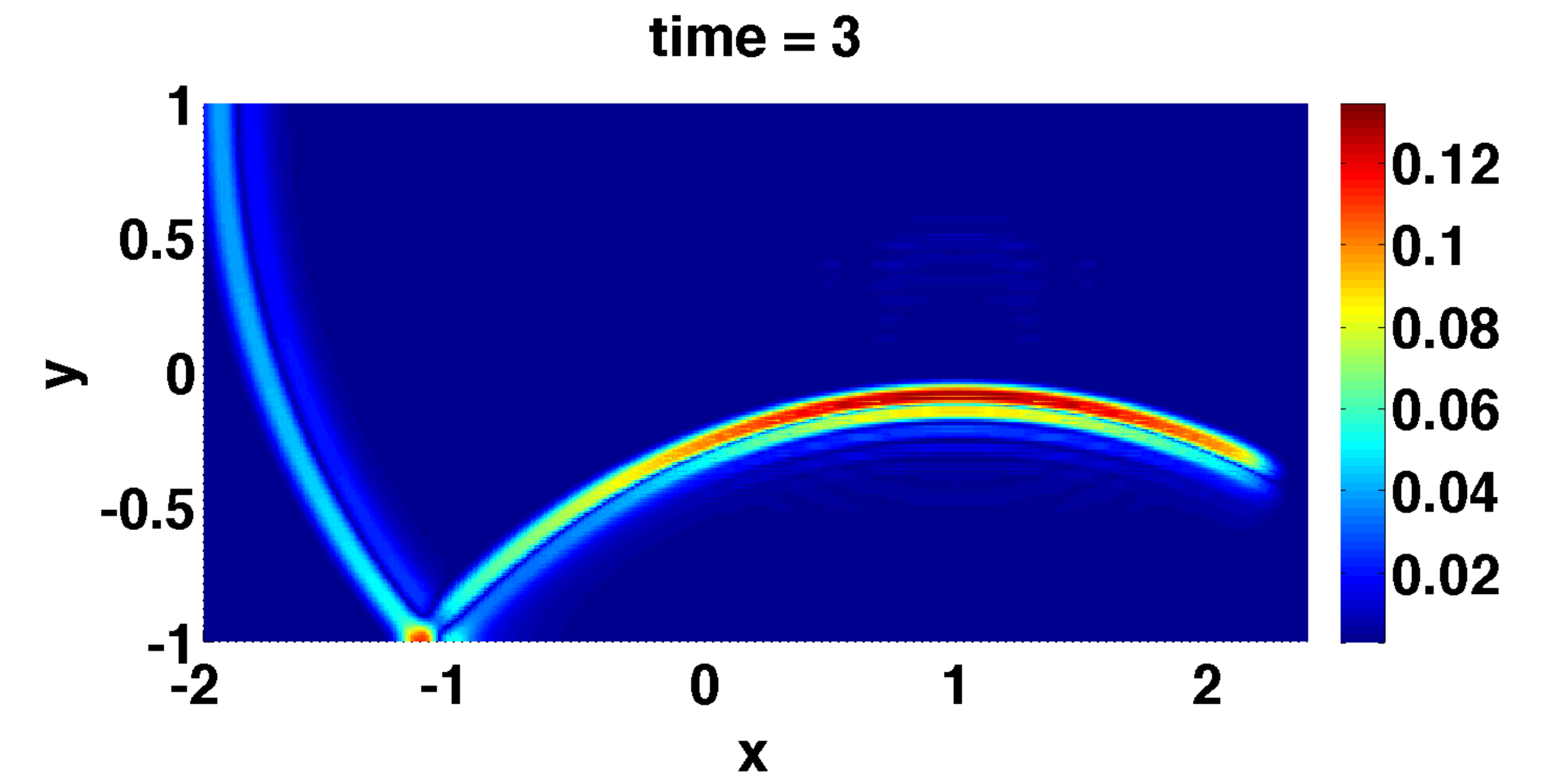}}
{\includegraphics[width=.49\linewidth]{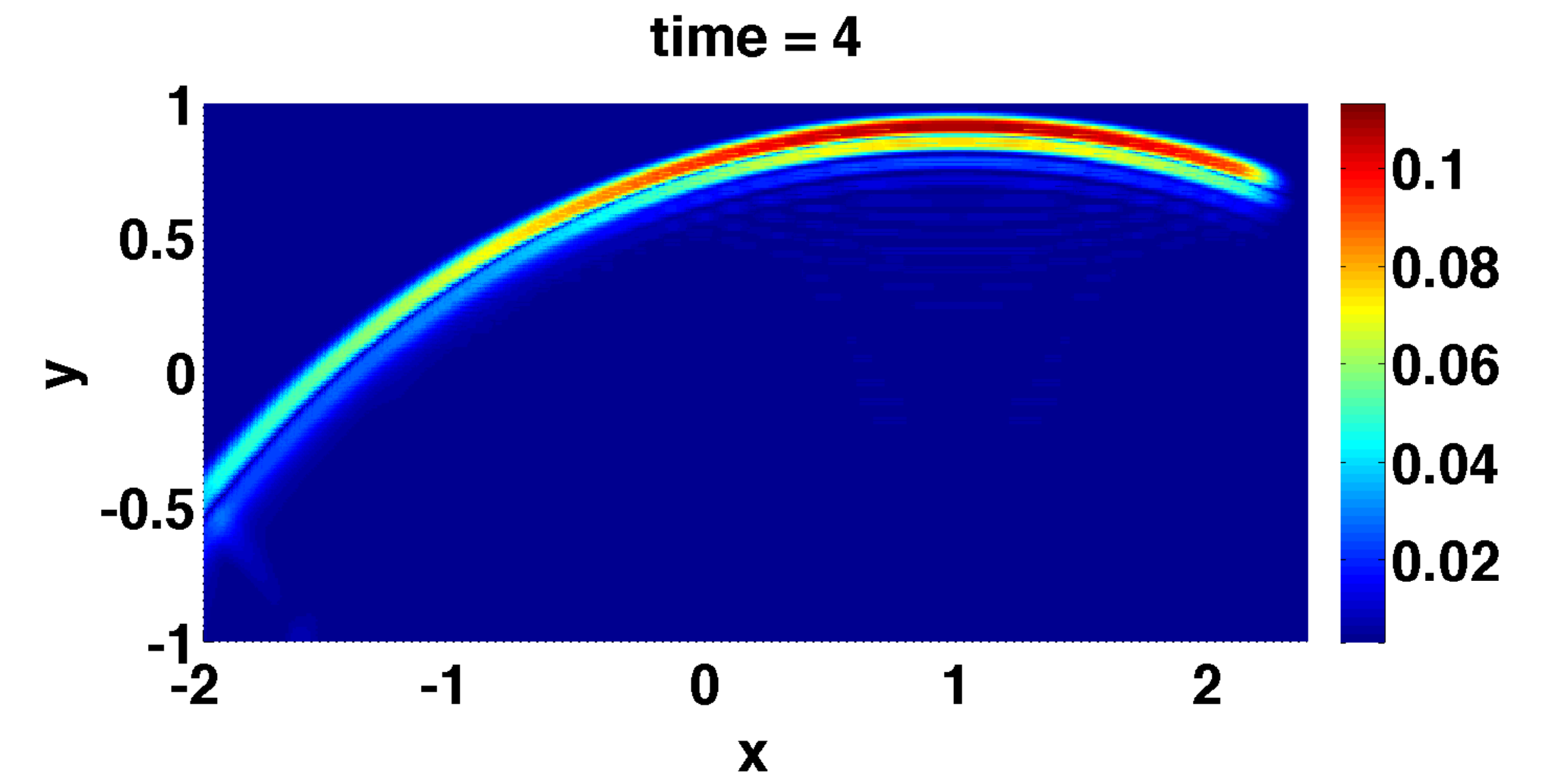}}
\caption{\textit{Snapshots of the electric field at $t = 1,  2, 3, 4$.}}
 \label{fig:Maxwell_Solutions_Waveguide}
\end{figure}

 \begin{table}[h!]
\centering
\begin{tabular}{c c c | c c   | c c c}
 {}&\multicolumn{2}{l}{$6$th Order}&\multicolumn{2}{l}{$4$th Order}&\multicolumn{2}{l}{$2$nd Order}\\
\cline{2-7}
 $h$&$\textbf{error}$&\textbf{rate}&$\textbf{error}$&\textbf{rate}&$\textbf{error}$&\textbf{rate}\\
\hline
0.04&1.08$\times 10^{-3}$&--&1.64$\times 10^{-3}$&--&2.63$\times 10^{-3}$&--\\
0.02& 6.22$\times 10^{-6}$&7.44&5.03$\times 10^{-6}$&8.35&3.61$\times 10^{-5}$&6.19\\
0.01&1.93$\times 10^{-7}$&5.01&8.22$\times 10^{-8}$&5.93&7.96$\times 10^{-8}$&8.82\\
0.005&7.42$\times 10^{-9}$&4.70&6.67$\times 10^{-9}$&3.62&7.43$\times 10^{-9}$&3.42\\
\hline
\end{tabular}
\caption{Total PML errors and the convergence of the error in a waveguide using SBP operators of interior order 2, 4, 6 for different grid resolutions.}
\label{tab:Error_PML_Waveguide}
\end{table}
\begin{figure} [h!]
 \centering
\subfigure[Second order accuracy]{\includegraphics[width=.48\linewidth]{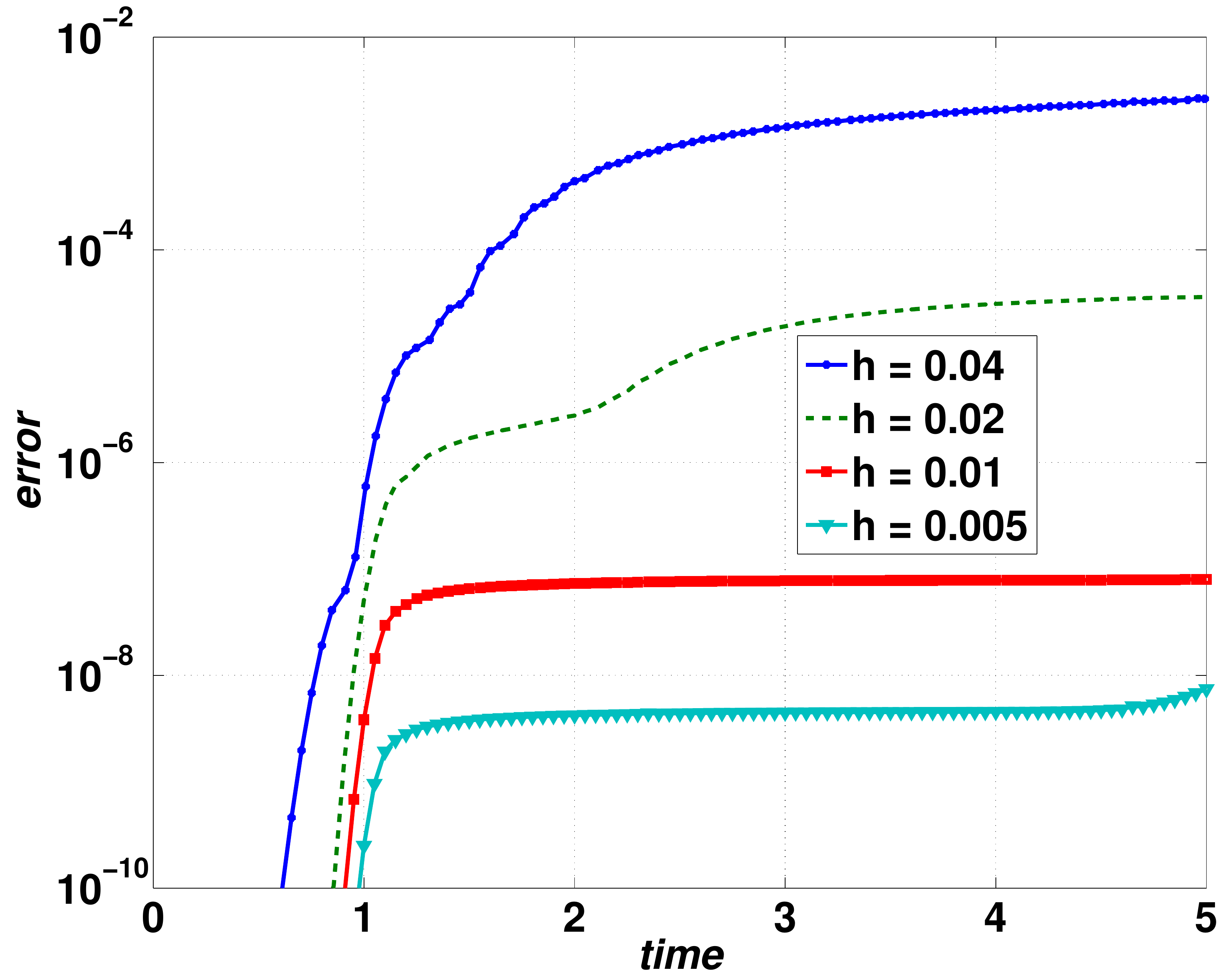}\label{fig:Error_SecondOrder_WaveGuide}}
\,
\subfigure[Fourth order accuracy]{\includegraphics[width=.48\linewidth]{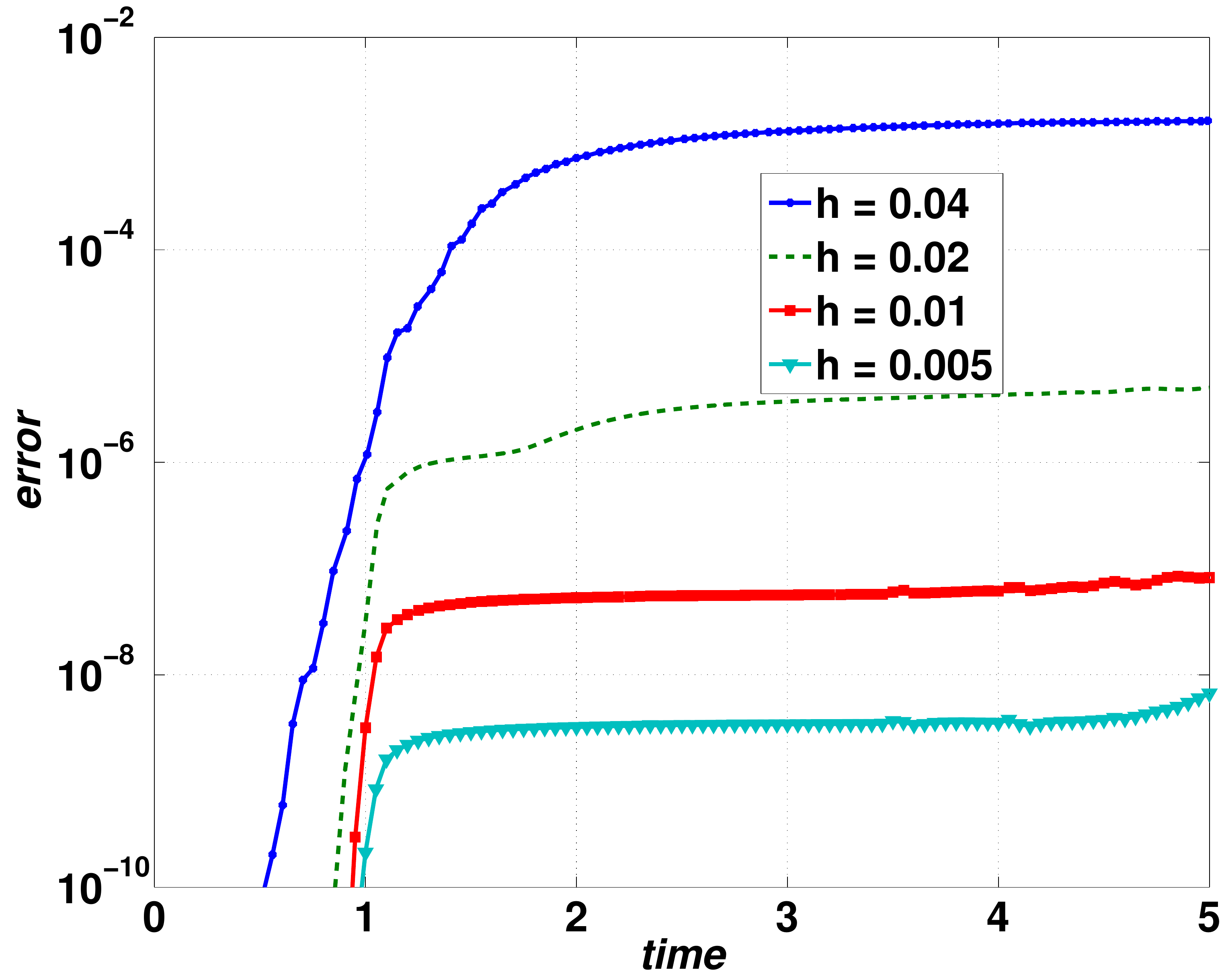}\label{fig:Error_FourthOrder_WaveGuide}}
\,
\subfigure[Sixth order accuracy]{\includegraphics[width=.48\linewidth]{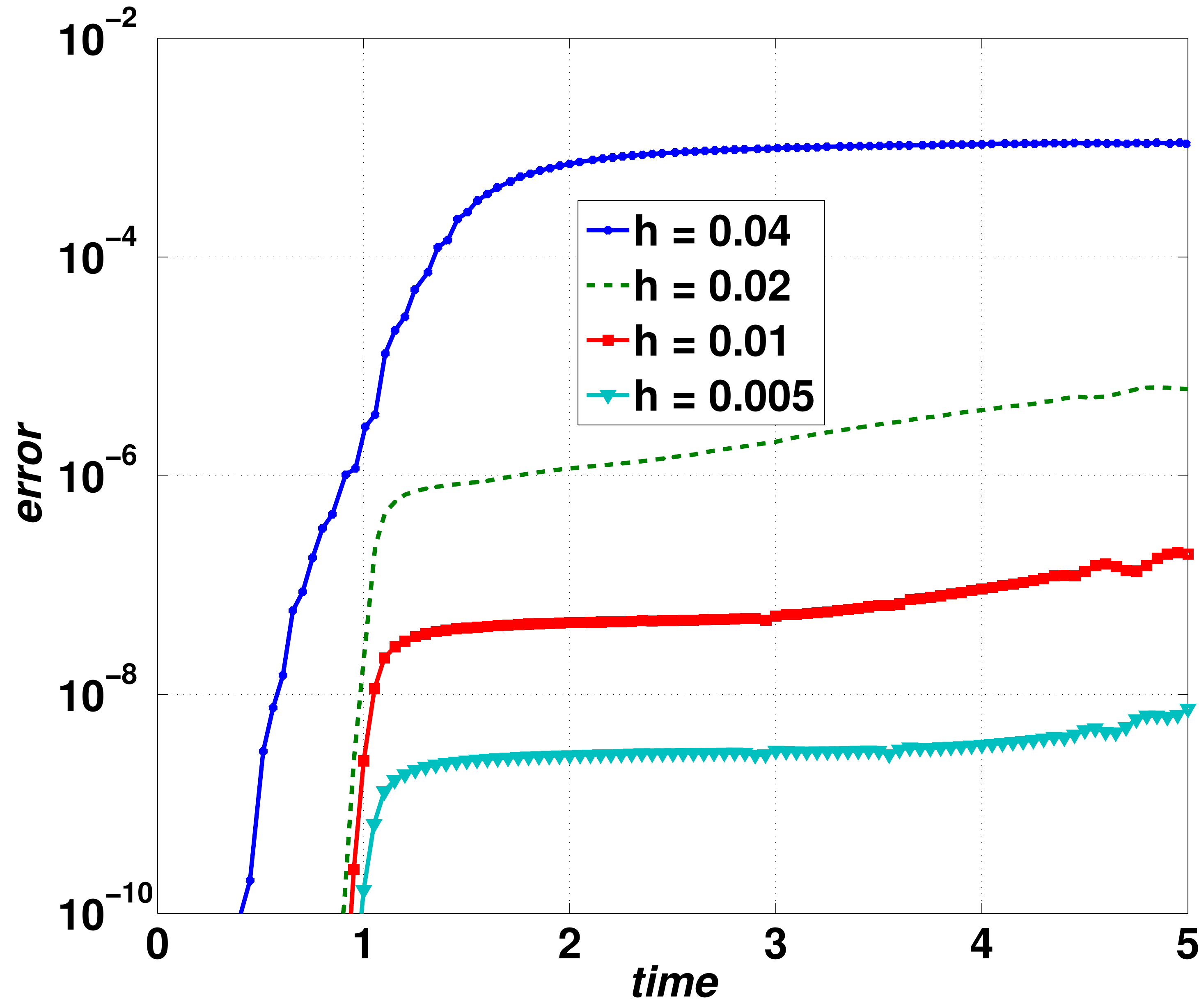}\label{fig:Error_SixthOrder_WaveGuide}}
 \caption{\textit{Total PML errors as a function of time using SBP operators of interior order 2, 4, 6 for different grid resolutions.}}
 \label{fig:PMLError_WaveGuide}
\end{figure}
\section{Summary and outlook}

In this paper, we have studied the (long time) temporal energy growth associated with numerical approximations of the  PML in bounded domains. Three different unsplit PML models and the classical split--field PML model are considered.  We proved the   stability of the constant coefficient IBVP for the PML. We also derived energy estimates for the constant coefficient IBVP in the Laplace space and for  variable coefficients  PML in the time domain domain.  Thus, establishing  well--posedness and stability of the IBVP corresponding to the PML. 

We have demonstrated in theory and numerical experiments that a numerical method which is stable in the absence of the PML can become unstable when the PML is introduced. Here, the discrete stability of the PML depends only on the numerical implementation of boundary conditions; the interior approximations do not play any roles.   We developed high order accurate and stable numerical approximation using SBP finite difference operators to approximate spatial derivatives and weak enforcement of boundary conditions using penalties. By constructing analogous discrete energy estimates we obtain a bound on the energy growth of the  solution  at any future time. Further, we showed that the corresponding constant coefficient PML problem can not support temporally growing solution.  We  presented numerical experiments demonstrating high order accuracy and longtime stability of the PML. 

We note that a stable numerical boundary treatment for a  particular PML model may not necessarily yield stable numerical solutions for another PML modeling the same physical phenomena. When the underlying physical model changes, the corresponding PML model will be different. We also expect different stable numerical boundary treatments.

Although the analysis here focuses on the Maxwell's equations the results obtained can be straightforwardly applied to equations of acoustics.

We are currently extending the techniques described in this paper  to other numerical methods such as finite element methods, spectral element methods, dG finite element/volume methods and mixed spectral-finite element methods. However, these  require substantially more details and  analysis. Our preliminary results are quite promising and will be reported in another paper \cite{Amler2014}.   It will be interesting to compare the results obtained from a dG method to the results obtained in this paper using high order finite difference SBP operators.

\appendix
\section{Some useful lemmata}
 Introduce the complex number $z = x + iy$ and define  the branch cut of $\sqrt{z}$ by
\begin{align}
-\pi < \arg{\left(x + iy\right)} \le \pi, \quad \arg{\sqrt{x + iy}} = \frac{1}{2}\arg{{\left(x + iy\right)}}.
\end{align}
The following Lemma was adapted from Lemma 6 in \cite{HeinzPetersson2011}.
\begin{lemma}\label{Lem:Heinz_Anders}
Let $k_x$ be a real number and let $s = a + ib$ be a complex number where $a > 0$. Consider the relation
\begin{equation}
\kappa = \sqrt{s^2 + k_x^2}.
\end{equation}
There are positive real numbers $\beta_0 \ge 1$, $0<\epsilon_0\le 1$ such that
$\Re{\kappa} = \beta_0a$, $\Im{\kappa} = \epsilon_0 b$.
\end{lemma}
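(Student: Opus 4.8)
The plan is to set $\kappa = p + iq$ with $p,q \in \mathbb{R}$ and reduce the statement to a pair of real equations. Since $s^2 + k_x^2 = (a^2 - b^2 + k_x^2) + 2iab$ while $\kappa^2 = (p^2 - q^2) + 2ipq$, matching real and imaginary parts gives
\begin{equation}
p^2 - q^2 = a^2 - b^2 + k_x^2, \qquad pq = ab.
\end{equation}
First I would check that $p = \Re{\kappa} > 0$. The chosen branch of the square root forces $\Re{\sqrt{z}} \ge 0$, with equality only when $z$ is a nonpositive real number; but here $z = s^2 + k_x^2$ has imaginary part $2ab$, which vanishes only if $b = 0$ (recall $a > 0$), and then $z = a^2 + k_x^2 > 0$. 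Hence $z$ is never a nonpositive real, so $p > 0$.

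From $pq = ab$ with $p > 0$ I obtain $q = ab/p$, so $q$ carries the sign of $b$. This lets me define $\beta_0 := p/a > 0$ and $\epsilon_0 := a/p > 0$; then $\Re{\kappa} = p = \beta_0 a$ and $\Im{\kappa} = q = (a/p)b = \epsilon_0 b$ (the case $b = 0$ giving $q = 0 = \epsilon_0 b$ trivially). Note in particular $\beta_0 \epsilon_0 = 1$, so it remains only to prove $\beta_0 \ge 1$, i.e. $p \ge a$; this automatically yields $\epsilon_0 = 1/\beta_0 \le 1$.

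The crux is therefore the single inequality $p \ge a$. Eliminating $q$ from the first relation via $q = ab/p$ produces a quadratic in $p^2$ whose nonnegative root is
\begin{equation}
p^2 = \frac{\bigl(a^2 - b^2 + k_x^2\bigr) + \sqrt{\bigl(a^2 - b^2 + k_x^2\bigr)^2 + 4a^2 b^2}}{2}.
\end{equation}
To show $p^2 \ge a^2$ I would move $a^2$ to the left, so that the claim becomes $\sqrt{(a^2 - b^2 + k_x^2)^2 + 4a^2 b^2} \ge 2a^2 - (a^2 - b^2 + k_x^2)$; this is trivial when the right-hand side is nonpositive, and otherwise squaring and simplifying collapses it exactly to $k_x^2 \ge 0$. (Equivalently, viewing $p^2$ as a function of $k_x^2 \ge 0$, one checks it equals $a^2$ at $k_x = 0$---consistent with $\kappa = \sqrt{s^2} = s$---and is nondecreasing, since its derivative equals $\tfrac12\bigl(1 + R/\sqrt{R^2 + 4a^2 b^2}\bigr) \ge 0$ with $R := a^2 - b^2 + k_x^2$.) Either route gives $p \ge a$, hence $\beta_0 \ge 1$ and $\epsilon_0 \le 1$, finishing the proof. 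I expect this inequality to be the only genuine obstacle; everything else is bookkeeping, with the only care needed at the boundary case $b = 0$, where the radicand simplifies and must be handled separately.
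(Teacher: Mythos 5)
Your proof is correct. Note, however, that the paper does not actually prove this lemma: it is stated with the remark that it is ``adapted from Lemma 6 in \cite{HeinzPetersson2011}'', so there is no in-paper argument to compare against. Your write-up therefore supplies a self-contained verification that the paper omits. The structure is sound: writing $\kappa = p+iq$ and matching $p^2-q^2 = a^2-b^2+k_x^2$, $pq = ab$ is the standard route; the branch-cut argument correctly rules out $p=0$ (the radicand $s^2+k_x^2$ has imaginary part $2ab$, so it can only be real when $b=0$, in which case it is positive); and the observation that $\epsilon_0 = a/p = 1/\beta_0$ is forced (for $b\ne 0$) reduces everything to the single inequality $p \ge a$, which your squaring argument — or equivalently the monotonicity of $p^2$ in $k_x^2$ from its value $a^2$ at $k_x=0$ — establishes cleanly, with the degenerate case $b=0$ handled separately as it should be. One small point worth making explicit if you polish this: the quadratic $p^4 - Rp^2 - a^2b^2 = 0$ has one nonnegative and one nonpositive root in $p^2$, so the displayed formula for $p^2$ is indeed the only admissible one; you implicitly use this when you take ``the nonnegative root.''
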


We can also prove the following lemma
\begin{lemma}\label{Lem:Duru1}
Let  $\sigma \ge 0$, $k_x$ be  real numbers and let $s = a + ib$ be a complex number where $a > 0$. Consider the relation
\begin{equation}
\widetilde{\kappa} = \sqrt{s^2 + \left(\frac{k_x}{1+\frac{\sigma}{s}}\right)^2}.
\end{equation}
There are positive real numbers $\beta_{0} \ge 1$, $0<\epsilon_{0}\le 1$ such that
\[
\Re\left(\widetilde{\kappa}\right) = \beta_{0}a+ \sigma \frac{\left(\beta_0-\epsilon_0\right) b^2}{\left(a + \sigma\right)^2 + b^2} > 0.
\]
\end{lemma}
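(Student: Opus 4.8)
The plan is to reduce the claim to Lemma~\ref{Lem:Heinz_Anders} by an algebraic simplification that exposes a principal square root of the shifted variable $s+\sigma$. First I would rewrite the radicand: since $1+\sigma/s = (s+\sigma)/s$, one has $\frac{k_x}{1+\sigma/s} = \frac{k_x s}{s+\sigma}$, and therefore
\begin{equation*}
\widetilde{\kappa}^2 = s^2 + \frac{k_x^2 s^2}{(s+\sigma)^2} = \left(\frac{s}{s+\sigma}\right)^2\big((s+\sigma)^2 + k_x^2\big).
\end{equation*}
Writing $\kappa_0 := \sqrt{(s+\sigma)^2 + k_x^2}$ for the principal square root, this reads $\widetilde{\kappa}^2 = \big(\tfrac{s}{s+\sigma}\kappa_0\big)^2$.

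Next I would apply Lemma~\ref{Lem:Heinz_Anders} with $s$ replaced by $s+\sigma = (a+\sigma)+ib$, whose real part $a+\sigma$ is strictly positive because $a>0$ and $\sigma\ge0$. This produces real numbers $\beta_0\ge1$ and $0<\epsilon_0\le1$ with $\Re(\kappa_0)=\beta_0(a+\sigma)$ and $\Im(\kappa_0)=\epsilon_0 b$.

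Then comes the branch determination, which I expect to be the main (though mild) obstacle: I must verify that the principal root $\widetilde{\kappa}$ equals $+\tfrac{s}{s+\sigma}\kappa_0$ rather than its negative. The two square roots of $\widetilde{\kappa}^2$ are $\pm\tfrac{s}{s+\sigma}\kappa_0$, while the branch convention forces $\Re(\widetilde{\kappa})\ge0$. Expanding $\tfrac{s}{s+\sigma}=\tfrac{[a(a+\sigma)+b^2]+ib\sigma}{(a+\sigma)^2+b^2}$, multiplying by $\kappa_0=\beta_0(a+\sigma)+i\epsilon_0 b$, and writing $D:=(a+\sigma)^2+b^2$, a direct computation gives
\begin{equation*}
\Re\!\left(\frac{s}{s+\sigma}\kappa_0\right) = \frac{\beta_0 a(a+\sigma)^2 + \beta_0(a+\sigma)b^2 - \epsilon_0\sigma b^2}{D} = \beta_0 a + \sigma\,\frac{(\beta_0-\epsilon_0)b^2}{D}.
\end{equation*}
Since $\beta_0\ge1\ge\epsilon_0$, $\sigma\ge0$ and $b^2\ge0$, this quantity is at least $\beta_0 a>0$, hence strictly positive; consequently $+\tfrac{s}{s+\sigma}\kappa_0$ has positive real part while $-\tfrac{s}{s+\sigma}\kappa_0$ has negative real part. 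As $\widetilde{\kappa}$ must have nonnegative real part, the correct choice of sign is $+$, so $\widetilde{\kappa}=\tfrac{s}{s+\sigma}\kappa_0$.

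Finally, reading off the real part from this identity yields exactly
\begin{equation*}
\Re(\widetilde{\kappa}) = \beta_0 a + \sigma\,\frac{(\beta_0-\epsilon_0)b^2}{(a+\sigma)^2+b^2} > 0,
\end{equation*}
which is the assertion. The only genuinely delicate point is the sign bookkeeping in the third step; everything else is routine algebra driven by the factorization $\widetilde{\kappa} = \tfrac{s}{s+\sigma}\sqrt{(s+\sigma)^2+k_x^2}$ together with the quantitative content of Lemma~\ref{Lem:Heinz_Anders}.
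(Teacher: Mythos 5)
Your proof is correct and follows essentially the same route as the paper's: factor $\widetilde{\kappa} = \frac{s}{s+\sigma}\sqrt{(s+\sigma)^2+k_x^2}$, apply Lemma~\ref{Lem:Heinz_Anders} with $s$ replaced by $s+\sigma$, and expand the product to read off the real part. The only place you go beyond the paper is in explicitly checking that the principal branch selects the $+$ sign in this factorization — a point the paper's one-line computation takes for granted — and that verification is sound.
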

\begin{proof}
Consider
\begin{displaymath}
\begin{split}
&\widetilde{\kappa} = \sqrt{s^2 + \left(\frac{k_x}{1+\frac{\sigma}{s}}\right)^2} = \frac{s}{s+\sigma}\sqrt{\left(s+\sigma\right)^2 +k_x^2} =  \frac{a\left(a + \sigma\right) + b^2 + ib\sigma}{\left(a + \sigma\right)^2 + b^2}\left( \beta_{0} \left(a + \sigma\right) + i \epsilon_{0} b\right), \\
&\implies \Re\left(\widetilde{\kappa}\right) = \beta_{0}a+ \sigma \frac{\left(\beta_0-\epsilon_0\right) b^2}{\left(a + \sigma\right)^2 + b^2} > 0.
\end{split}
\end{displaymath}
\end{proof}
\begin{lemma}\label{Lem:Duru2}
Let  $\sigma \ge 0$, $k_x$ be  real numbers and let $s = a + ib$ be a complex number where $a > 0$. Consider the relation
\begin{equation}
\kappa = \sqrt{s^2 + k_x^2}.
\end{equation}
There are positive real numbers $\beta_{0} \ge 1$, $0<\epsilon_{0}\le 1$ such that
\[
\Re\left(\left(1 + \frac{\sigma}{s}\right){\kappa}\right) = \left(\beta_{0}a\left(1 + \frac{\sigma a}{a ^2 + b^2} \right) +  \epsilon_{0}\frac{\sigma b^2}{a ^2 + b^2}\right) > 0,
\]
\end{lemma}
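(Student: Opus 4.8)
The plan is to reduce everything to the representation of $\kappa$ supplied by Lemma \ref{Lem:Heinz_Anders}, after which only elementary algebra remains. First I would apply Lemma \ref{Lem:Heinz_Anders} with the real parameter $k_x$ and $s = a+ib$, $a>0$, to obtain positive real numbers $\beta_0 \ge 1$ and $0 < \epsilon_0 \le 1$ with $\Re\kappa = \beta_0 a$ and $\Im\kappa = \epsilon_0 b$, i.e. $\kappa = \beta_0 a + i\epsilon_0 b$. These are precisely the constants that appear in the claimed identity, so the whole statement becomes a matter of multiplying out and verifying the real part.

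Next I would expand the stretching factor in Cartesian form. Rationalizing,
\[
1 + \frac{\sigma}{s} = 1 + \frac{\sigma(a - ib)}{a^2 + b^2} = \left(1 + \frac{\sigma a}{a^2+b^2}\right) - i\,\frac{\sigma b}{a^2+b^2},
\]
so its real part is $1 + \sigma a/(a^2+b^2)$ and its imaginary part is $-\sigma b/(a^2+b^2)$. Then, using $\Re(zw) = \Re z\,\Re w - \Im z\,\Im w$ with $z = 1+\sigma/s$ and $w = \kappa$, I would compute
\[
\Re\!\left(\left(1+\tfrac{\sigma}{s}\right)\kappa\right) = \left(1 + \frac{\sigma a}{a^2+b^2}\right)\beta_0 a - \left(-\frac{\sigma b}{a^2+b^2}\right)\epsilon_0 b = \beta_0 a\left(1 + \frac{\sigma a}{a^2+b^2}\right) + \epsilon_0\,\frac{\sigma b^2}{a^2+b^2},
\]
which is exactly the expression in the statement. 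The step deserving the most care is the sign bookkeeping in this cross term: because $\Im(1+\sigma/s)$ is negative and $\Im\kappa = \epsilon_0 b$ carries the sign of $b$, the product $-\Im z\,\Im w$ comes out as $+\epsilon_0\sigma b^2/(a^2+b^2)$, and it is essential that the two sign flips combine to a plus.

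Finally, positivity is immediate term by term. Since $a>0$, $\beta_0 \ge 1>0$, and $\sigma \ge 0$, the factor $1 + \sigma a/(a^2+b^2) \ge 1 > 0$, so $\beta_0 a\bigl(1 + \sigma a/(a^2+b^2)\bigr) > 0$; meanwhile $\epsilon_0 \sigma b^2/(a^2+b^2) \ge 0$ because $\epsilon_0>0$, $\sigma \ge 0$, and $b^2 \ge 0$. Hence the sum is strictly positive, giving $\Re\bigl((1+\sigma/s)\kappa\bigr) > 0$ as claimed. The only genuine content is Lemma \ref{Lem:Heinz_Anders}; with that in hand there is no real obstacle, the remainder being a direct computation.
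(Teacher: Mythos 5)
Your proposal is correct and follows essentially the same route as the paper's proof: invoke Lemma \ref{Lem:Heinz_Anders} to write $\kappa = \beta_0 a + i\epsilon_0 b$, expand $1+\sigma/s$ in Cartesian form, and read off the real part of the product, with positivity then immediate from $a>0$, $\beta_0\ge 1$, $\epsilon_0>0$, and $\sigma\ge 0$. Nothing is missing.
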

\begin{proof}
Consider
\begin{displaymath}
\begin{split}
&\left(1 + \frac{\sigma}{s}\right){\kappa} = \left(1 + \frac{\sigma a}{a ^2 + b^2}- i\frac{\sigma b}{a ^2 + b^2}\right) \left( \beta_{0} a + i \epsilon_{0} b\right), \\
&\implies  \Re\left(\left(1 + \frac{\sigma}{s}\right){\kappa}\right) = \left(\beta_{0}a\left(1 + \frac{\sigma a}{a ^2 + b^2} \right) +  \epsilon_{0}\frac{\sigma b^2}{a ^2 + b^2}\right) > 0.
\end{split}
\end{displaymath}
\end{proof}
\begin{lemma}\label{Lem:Duru3}
Let  $\sigma \ge 0$, be  a nonnegative real number and  $s = a + ib$ a complex number where $a > 0$. Consider the relations
\begin{equation}
S_x = 1 + \frac{\sigma}{s} = \frac{a\left(a+\sigma\right) + b^2 - i\sigma b}{a^2 + b^2}, \quad A_0 = \frac{a^2 + b^2}{\left(a\left(a+\sigma\right) + b^2\right)^2 + \left(\sigma b\right)^2 } > 0.
\end{equation}
We have
\begin{align*}
 \Re{\left(\frac{1}{S_x}\right)} &= A_0\left(a\left(a+\sigma\right) + b^2\right)  > 0, \notag\\
\Re{\left(\frac{(sS_x)^*}{S_x}\right)} &= A_0\left(\left( a +\sigma\right)\left(a\left(a+\sigma\right) + b^2\right) + \sigma b^2\right) > 0,\notag\\
 \Re\left(\frac{s^*}{S_x}\right) &= A_0\left(a \left(a\left(a+\sigma\right) + b^2\right) + \sigma b^2\right)> 0.
\end{align*}
\end{lemma}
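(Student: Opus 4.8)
The plan is to reduce all three identities to a single explicit computation of $1/S_x$, after which each real part is read off and its positivity is transparent from the hypotheses $a>0$ and $\sigma\ge 0$.

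First I would record two elementary facts. Since $a>0$ and $\sigma\ge 0$, the quantity $a(a+\sigma)+b^2$ is strictly positive, so the denominator $(a(a+\sigma)+b^2)^2+(\sigma b)^2$ of $A_0$ is nonzero and $A_0>0$; this simultaneously guarantees $S_x\neq 0$, so every reciprocal below is well defined. The crucial algebraic simplification is the identity $sS_x = s(1+\sigma/s) = s+\sigma$, whence $(sS_x)^* = \overline{s+\sigma} = (a+\sigma)-ib$ because $\sigma$ is real. This is what renders the second identity no harder than the first and third.

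Then I would compute $1/S_x$ once. Writing $S_x = N/(a^2+b^2)$ with $N = (a(a+\sigma)+b^2) - i\sigma b$, multiplication by $\bar N/\bar N$ gives $1/S_x = (a^2+b^2)\bar N/|N|^2 = A_0((a(a+\sigma)+b^2) + i\sigma b)$, where $|N|^2 = (a(a+\sigma)+b^2)^2+(\sigma b)^2$. Taking the real part immediately yields $\Re(1/S_x)=A_0(a(a+\sigma)+b^2)$, the first formula. For the remaining two I would multiply this expression for $1/S_x$ by $(sS_x)^* = (a+\sigma)-ib$ and by $s^* = a-ib$ respectively, and extract the real part of each product using $\Re((x_1+iy_1)(x_2+iy_2)) = x_1x_2 - y_1y_2$. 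With $x_2 = a(a+\sigma)+b^2$ and $y_2 = \sigma b$, this produces exactly $A_0((a+\sigma)(a(a+\sigma)+b^2)+\sigma b^2)$ and $A_0(a(a(a+\sigma)+b^2)+\sigma b^2)$, matching the stated expressions.

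Finally, positivity follows by inspection in each case: $A_0>0$, and every summand inside the parentheses is a product of nonnegative factors (using $a>0$, $a+\sigma>0$, $\sigma\ge0$, $b^2\ge0$) with leading term $a(a+\sigma)+b^2>0$, so each bracketed quantity is strictly positive. I do not anticipate any genuine obstacle here; the computation is routine, and the only point deserving care is the initial verification that the denominator of $A_0$ cannot vanish, which is exactly where the strict hypothesis $a>0$ enters.
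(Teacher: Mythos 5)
Your proposal is correct and follows essentially the same route as the paper's proof: compute $1/S_x = A_0\left(a(a+\sigma)+b^2 + i\sigma b\right)$ once, use $sS_x = s+\sigma$ to get $(sS_x)^* = (a+\sigma)-ib$, and read off the three real parts by direct multiplication, with positivity immediate from $a>0$ and $\sigma\ge 0$. No gaps.
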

\begin{proof}
Consider
\begin{displaymath}
\begin{split}
&\frac{1}{S_x} = \frac{a^2 + b^2}{a\left(a+\sigma\right) + b^2 - i\sigma b} = A_0\left(a\left(a+\sigma\right) + b^2 + i\sigma b\right), \\
&\implies  \Re{\left(\frac{1}{S_x}\right)} = A_0\left(a\left(a+\sigma\right) + b^2\right) > 0.
\end{split}
\end{displaymath}
Note that 
\[
sS_x = s + \sigma = a+\sigma + ib \implies \left(sS_x\right)^* = a+\sigma - ib.
\]
Thus,  we have
\[
\frac{\left(sS_x\right)^*}{S_x} =  A_0\left(a\left(a+\sigma\right) + b^2 + i\sigma b\right)\left(a+\sigma - ib\right) \implies \Re{\left(\frac{(sS_x)^*}{S_x}\right)} = A_0\left(\left( a +\sigma\right)\left(a\left(a+\sigma\right) + b^2\right) + \sigma b^2\right) > 0.
\]
Consider now 
\[
\frac{s^*}{S_x} =  A_0\left(a\left(a+\sigma\right) + b^2 + i\sigma b\right)\left(a - ib\right) \implies \Re{\left(\frac{s^*}{S_x}\right)} = A_0\left( a\left(a\left(a+\sigma\right) + b^2\right) + \sigma b^2\right) > 0.
\]
The proof of the lemma is complete.
\end{proof}
\section{Proof of Theorem \ref{Theorem:Wellposedness}}\label{proof:Wellposedness}
Multiply  equation (\ref{eq:Maxwell_PML_WaveGuide_2nd}) by $ \partial { E_z}/\partial t$ and integrate  over the whole domain. 
Integration by parts yields
\begin{equation}\label{eq:wave1PML_Weak}
\begin{split}
&\left(\frac{\partial { E_z}}{\partial t}, \frac{\partial^2 { E_z}}{\partial t^2}\right)+ \left(\frac{\partial { E_z}}{\partial t}, \sigma\frac{\partial { E_z}}{\partial t}\right) \\
&=-\left(\frac{\partial^2 { E_z}}{\partial x \partial t}, \frac{\partial { E_z}}{\partial x} +\sigma H_y \right)-\left(\frac{\partial^2 { E_z}}{\partial y \partial t}, \frac{\partial { E_z}}{\partial y} + \sigma H_x\right) \\
&+  \int_{-y_0}^{y_0}\left(\frac{\partial  { E_z}}{\partial t}\left(\frac{\partial { E_z}}{\partial x} + \sigma H_y\right)\Big|_{-\left(x_0+\delta\right)}^{x_0+\delta}\right)dy +  \int_{-\left(x_0+\delta\right)}^{x_0+\delta}\left(\frac{\partial  { E_z}}{\partial t}\left(\frac{\partial { E_z}}{\partial y} + \sigma H_x\right)\Big|_{-y_0}^{y_0}\right)dx.
\end{split}
\end{equation}
Imposing the boundary conditions (\ref{eq:wall_bc_y1})  at $y = \pm y_0$ and  (\ref{eq:wall_bc_x1}) at $x = \pm \left(x_0+\delta\right) $ , we arrive at
\begin{equation}\label{eq:EnergyEstimate_first_continuous}
\begin{split}
&\frac{1}{2}\frac{d}{dt}\left(\Big\|\frac{\partial { E_z}}{\partial t} \Big\|^2 + \left(\frac{\partial { E_z}}{\partial x}, \frac{\partial { E_z}}{\partial x} \right) + 2 \left(\frac{\partial { E_z}}{\partial x},  \sigma H_y \right) + \left(\frac{\partial { E_z}}{\partial y}, \frac{\partial { E_z}}{\partial y} \right) + 2 \left(\frac{\partial { E_z}}{\partial y},  \sigma H_x \right) + \int_{\Gamma_x}\gamma_y\sigma\left( E_z^2(x, y_0, t) + E_z^2(x, -y_0, t)\right)dx\right) \\
&=    \left(\frac{\partial { E_z}}{\partial x}, {\sigma}\frac{\partial { H_y}}{\partial t}\right) + \left(\frac{\partial { E_z}}{\partial y}, {\sigma}\frac{\partial { H_x}}{\partial t}\right)  - \left(\frac{\partial { E_z}}{\partial t}, \sigma \frac{\partial { E_z}}{\partial t}\right) - \gamma_y\int_{\Gamma_x}\left(\frac{\partial  { E_z^2}}{\partial t}(x, -y_0, t) + \frac{\partial  { E_z^2}}{\partial t}(x, y_0, t)\right)dx\\
&-  \gamma_x\int_{\Gamma_y}\left(\frac{\partial  { E_z^2}}{\partial t}(-x_0-\delta, y, t) + \frac{\partial  { E_z^2}}{\partial t}(x_0+\delta, y, t) \right)dy.
\end{split}
\end{equation}
By adding 
\[
\frac{d}{dt}\left(\sigma H_y, \sigma H_y\right)  +  \frac{d}{dt}\left(\sigma H_x, \sigma H_x\right),
\]
to both sides of equation (\ref{eq:EnergyEstimate_first_continuous}) we have
\begin{equation}\label{eq:EnergyEstimate_second_continuous}
\begin{split}
&\frac{1}{2}\frac{d}{dt}\left( \Big\|\frac{\partial { E_z}}{\partial t} \Big\|^2  +  \Big\|\frac{\partial{ E_z}}{\partial x} + \sigma H_y\Big\|^2 + \Big\|\frac{\partial{ E_z}}{\partial y}+ \sigma H_x\Big\|^2 + \Big\|\sigma H_y\Big\|^2 + \Big\| \sigma H_x\Big\|^2  + \gamma \sigma \|E_z\left(-y_0, \mathrm{t}'\right)\|^2_{\Gamma_y} +\gamma \sigma \|E_z\left(y_0, \mathrm{t}'\right)\|^2_{\Gamma_x}\right) \\
&= \left(\frac{\partial { E_z}}{\partial x}, {\sigma}\frac{\partial { H_y}}{\partial t}\right) + \left(\frac{\partial { E_z}}{\partial y}, {\sigma}\frac{\partial { H_x}}{\partial t}\right) + 2\left(\frac{\partial { H_y}}{\partial t}, \sigma^2 { H_y} \right)  +2\left(\frac{\partial { H_x}}{\partial t}, \sigma^2 { H_x} \right)  - \left(\frac{\partial { E_z}}{\partial t}, \sigma \frac{\partial { E_z}}{\partial t}\right) \\
&- \gamma_y\int_{\Gamma_x}\left(\frac{\partial  { E_z}}{\partial t}^2(x, -y_0, t) + \frac{\partial  { E_z}}{\partial t}^2(x, y_0, t)\right)dx -  \gamma_x\int_{\Gamma_y}\left(\frac{\partial  { E_z}}{\partial t}^2(-x_0-\delta, y, t) + \frac{\partial  { E_z}}{\partial t}^2(x_0+\delta, y, t) \right)dy.
\end{split}
\end{equation}
By using \eqref{eq:Maxwell_PML_WaveGuide_x} and \eqref{eq:Maxwell_PML_WaveGuide_y} in  the right hand side of (\ref{eq:EnergyEstimate_second_continuous}), we obtain
\begin{equation}\label{eq:EnergyEstimate_third_continuous}
\begin{split}
\frac{1}{2}\frac{d}{dt}\mathcal{E}^{\left(1\right)}_{\sigma}\left(t\right) &= -\left(\frac{\partial { E_z}}{\partial x}, {\sigma}\left(\frac{\partial{ E_z}}{\partial x} + \sigma H_y \right)\right) + \left(\frac{\partial { E_z}}{\partial y}, {\sigma}\frac{\partial{ E_z}}{\partial y}  \right) 
- 2\left(\left(\frac{\partial{ E_z}}{\partial x} + \sigma\left(x\right) H_y\right), \sigma^2 { H_y} \right)  +2\left(\frac{\partial{ E_z}}{\partial y} , \sigma^2 { H_x} \right)
 \\
&
 - \left(\frac{\partial { E_z}}{\partial t}, \sigma \frac{\partial { E_z}}{\partial t}\right),
\end{split}
\end{equation}
where the energy $\mathcal{E}_{\sigma}^{\left(1\right)}$ is define in \eqref{eq:EnergyNorm_continuous}.
Note that if the boundary conditions are  (\ref{eq:wall_bc_y1_pec})  at $y = \pm y_0$ and  (\ref{eq:wall_bc_x1_pec}) at $x = \pm (x_0+\delta) $, from \eqref{eq:EnergyEstimate_first_continuous} we have
\begin{equation}\label{eq:EnergyEstimate_third_continuous_pec}
\begin{split}
\frac{1}{2}\frac{d}{dt}\mathcal{E}^{\left(0\right)}_{\sigma}\left(t\right) &= -\left(\frac{\partial { E_z}}{\partial x}, {\sigma}\left(\frac{\partial{ E_z}}{\partial x} + \sigma H_y \right)\right) + \left(\frac{\partial { E_z}}{\partial y}, {\sigma}\frac{\partial{ E_z}}{\partial y}  \right) 
- 2\left(\left(\frac{\partial{ E_z}}{\partial x} + \sigma\left(x\right) H_y\right), \sigma^2 { H_y} \right)  +2\left(\frac{\partial{ E_z}}{\partial y} , \sigma^2 { H_x} \right)
 \\
&
 - \left(\frac{\partial { E_z}}{\partial t}, \sigma \frac{\partial { E_z}}{\partial t}\right),
\end{split}
\end{equation}
Thus, Cauchy--Schwartz inequality and   the facts $\sqrt{\mathcal{E}^{\left(1\right)}_{\sigma}\left(t\right)} \ge \|{\partial { E_z}}/{\partial t}\|$, 
$\sqrt{\mathcal{E}^{\left(1\right)}_{\sigma}\left(t\right)} \ge \| {\partial { E_z}}/{\partial x}+\sigma{{ H_y}}\|$, 
$\sqrt{\mathcal{E}^{(1)}_{\sigma}} \ge  \| \sigma{{ H_y}}\|$, $\sqrt{\mathcal{E}^{\left(1\right)}_{\sigma}\left(t\right)} \ge \| {\partial { E_z}}/{\partial y}+\sigma{{ H_x}}\|$, 
$\sqrt{\mathcal{E}^{\left(1\right)}_{\sigma}\left(t\right)} \ge  \| \sigma{{ H_x}}\|$ and
$\sqrt{\mathcal{E}^{\left(0\right)}_{\sigma}\left(t\right)} \ge \|{\partial { E_z}}/{\partial t}\|$, 
$\sqrt{\mathcal{E}^{\left(0\right)}_{\sigma}\left(t\right)} \ge \| {\partial { E_z}}/{\partial x}+\sigma{{ H_y}}\|$, 
$\sqrt{\mathcal{E}^{(0)}_{\sigma}} \ge  \| \sigma{{ H_y}}\|$, $\sqrt{\mathcal{E}^{\left(0\right)}_{\sigma}\left(t\right)} \ge \| {\partial { E_z}}/{\partial y}+\sigma{{ H_x}}\|$, 
$\sqrt{\mathcal{E}^{\left(1\right)}_{\sigma}\left(t\right)} \ge  \| \sigma{{ H_x}}\|$
  lead to
\begin{equation}\label{eq:EnergyEstimate_third}
\begin{split}
\frac{d}{dt}\sqrt{\mathcal{E}^{\left(1\right)}_{\sigma}\left(t\right)} \le \sigma_{\infty}\sqrt{\mathcal{E}^{\left(1\right)}_{\sigma}\left(t\right)}, \quad \frac{d}{dt}\sqrt{\mathcal{E}^{\left(0\right)}_{\sigma}\left(t\right)} \le \sigma_{\infty}\sqrt{\mathcal{E}^{\left(0\right)}_{\sigma}\left(t\right)}  .
\end{split}
\end{equation}
The proof of the theorem is complete.
\hfill$\square$
\section{Proof of  Theorem \ref{Theorem:Discrete_Stability}}\label{proof:Discrete_Stability}
Set $\theta = 1$  in  \eqref{eq:Maxwell_PML_WaveGuide_Discrete_first_1} we have 
\begin{subequations}\label{eq:Maxwell_PML_WaveGuide_Discrete_first_appendix}
    \begin{alignat}{2}
      \frac{\mathrm{d^2}{\mathbf{E}_z}}{\mathrm{d t^2}}  &= -\left(\mathrm{P}_x^{-1}D_x^T \otimes I_y\right)\left(\mathrm{P}_x \otimes I_y\right)\left(\left(D_x \otimes I_y\right)\mathbf {E}_z +  \mathbf{\sigma}\mathbf {H}_y\right) - \left( I_x\otimes\mathrm{P}_y^{-1}D_y^T \right)\left(I_x \otimes \mathrm{P}_y\right)\left(\left(I_x \otimes D_y\right)\mathbf {E}_z +  \mathbf{\sigma}\mathbf {H}_x\right) 
  - \sigma\frac{\mathrm{d}{\mathbf{E}_z}}{\mathrm{d t}} \notag \\ 
  &  - \left(\mathrm{P}_x^{-1}\left(E_{Rx}+E_{Lx}\right)\otimes I_y + \left(I_x \otimes \mathrm{P}_y^{-1}\left(E_{Ry}+E_{Ly}\right)\right)\right)\frac{\mathrm{d}{\mathbf{E}_z}}{\mathrm{d t}}  - \sigma\left(I_x \otimes \mathrm{P}_y^{-1}\left(E_{Ry}+E_{Ly}\right)\right)\mathbf{E}_z ,\label{eq:Maxwell_PML_WaveGuide_Discrete_first_1_appendix} \\
      \frac{\mathrm{d}{\mathbf{H}_y}}{\mathrm{d t}}  &= -\left(D_x \otimes I_y\right)\mathbf {E}_z- \mathbf{\sigma}\mathbf {H}_y , \label{eq:Maxwell_PML_WaveGuide_Discrete_first_2_appendix} \\
     \frac{\mathrm{d}{\mathbf{H}_x}}{\mathrm{d t}}  &= \left(I_x \otimes D_y\right)\mathbf {E}_z .\label{eq:Maxwell_PML_WaveGuide_Discrete_first_3_appendix}
    \end{alignat}
  \end{subequations}
Multiplying equation \eqref{eq:Maxwell_PML_WaveGuide_Discrete_first_1_appendix} with $\left({\mathrm{d} { \mathbf{E}_z}}/{\mathrm {d t}}\right)^T{\mathbf{P}_{xy}}$ from the left, we have
\begin{equation}\label{eq:discrete_estimate_1}
\begin{split}
&\frac{1}{2}\frac{\mathrm{d} }{\mathrm {d t}}\Big\langle \frac{\mathrm{d} { \mathbf{E}_z}}{\mathrm {d t}}, \frac{\mathrm{d} { \mathbf{E}_z}}{\mathrm {d t}}\Big\rangle_{\mathbf{P}_{xy}} + \Big\langle \frac{\mathrm{d} { \mathbf{E}_z}}{\mathrm {d t}}, \left(\sigma+\left(E_{Rx}+E_{Lx}\right)\otimes \mathrm{P}_y + \left(\mathrm{P}_x \otimes \left(E_{Ry}+E_{Ly}\right)\right)\right)\frac{\mathrm{d} { \mathbf{E}_z}}{\mathrm {d t}}\Big\rangle =\\
& -\Big\langle\left(\left(D_x \otimes I_y\right)\mathbf {E}_z +  \mathbf{\sigma}\mathbf {H}_y\right), \left(D_x \otimes I_y\right)\frac{\mathrm{d} { \mathbf{E}_z}}{\mathrm {d t}}\Big\rangle_{\mathbf{P}_{xy}}-\Big\langle\left(\left(I_x \otimes D_y\right)\mathbf {E}_z +  \mathbf{\sigma}\mathbf {H}_x\right), \left(I_x \otimes D_y\right)\frac{\mathrm{d} { \mathbf{E}_z}}{\mathrm {d t}}\Big\rangle_{\mathbf{P}_{xy}} \\
& - \frac{\mathrm{d} { \mathbf{E}_z}}{\mathrm {d t}}^T\left(\mathrm{P}_x \otimes \sigma\left(E_{Ry}+E_{Ly}\right)\right)\mathbf{E}_z.
\end{split}
\end{equation}
By adding 
\begin{equation}
\frac{d}{dt}\Big\langle \sigma\mathbf{H}_y, \sigma\mathbf{H}_y\Big\rangle_{\mathbf{P}_{xy}}  + \frac{d}{dt}\Big\langle \sigma\mathbf{H}_x, \sigma\mathbf{H}_x\Big\rangle_{\mathbf{P}_{xy}}
\end{equation}
to both sides of equation (\ref{eq:discrete_estimate_1}) we have
\begin{equation}\label{eq:discrete_estimate_2}
\begin{split}
\frac{1}{2}\frac{d}{dt} \mathcal{E}_{h\sigma}^{\left(1\right)}\left(t\right) &= \Big\langle\left(D_x \otimes I_y\right)\mathbf {E}_z , {\sigma}\frac{d {\mathbf {H}_y}}{d t}\Big\rangle_{\mathbf{P}_{xy}} + \Big\langle\left(I_x \otimes D_y\right)\mathbf {E}_z , {\sigma}\frac{d {\mathbf {H}_x}}{d t}\Big\rangle_{\mathbf{P}_{xy}} + 2\Big\langle \sigma\frac{d \mathbf{H}_y}{dt}, \sigma\mathbf{H}_y\Big\rangle_{\mathbf{P}_{xy}}  + 2\Big\langle \sigma\frac{d \mathbf{H}_x}{dt}, \sigma\mathbf{H}_x\Big\rangle_{\mathbf{P}_{xy}} ,
\end{split}
\end{equation}
where the energy $\mathcal{E}_{h\sigma}^{\left(1\right)}\left(t\right)$ is defined in  \eqref{eq:Energy_Discrete_first}.
Eliminating the time derivatives on the right hand side  of  \eqref{eq:discrete_estimate_2}  using \eqref{eq:Maxwell_PML_WaveGuide_Discrete_first_2}--\eqref{eq:Maxwell_PML_WaveGuide_Discrete_first_3}, we obtain
\begin{equation}\label{eq:discrete_estimate_3}
\begin{split}
\frac{1}{2}\frac{d}{dt} \mathcal{E}_{h\sigma}^{\left(1\right)}\left(t\right) &= -\Big\langle\left(D_x \otimes I_y\right)\mathbf {E}_z , {\sigma}\left(\left(D_x \otimes I_y\right)\mathbf {E}_z+ \mathbf{\sigma}\mathbf {H}_y \right)\Big\rangle_{\mathbf{P}_{xy}} + \Big\langle\left(I_x \otimes D_y\right)\mathbf {E}_z , {\sigma}\left(I_x \otimes D_y\right)\mathbf {E}_z \Big\rangle_{\mathbf{P}_{xy}}\\
& - 2\Big\langle \sigma\left(\left(D_x \otimes I_y\right)\mathbf {E}_z+ \mathbf{\sigma}\mathbf {H}_y\right), \sigma\mathbf{H}_y\Big\rangle_{\mathbf{P}_{xy}}  + 2\Big\langle \sigma\left(I_x \otimes D_y\right)\mathbf {E}_z , \sigma\mathbf{H}_x\Big\rangle_{\mathbf{P}_{xy}} . 
\end{split}
\end{equation}
As before, Cauchy--Schwartz inequality and   the facts 
\[
\sqrt{\mathcal{E}_{h\sigma}^{\left(1\right)}} \ge \Big\|\frac{\mathrm{d} { \mathbf{E}_z}}{\mathrm {d t}} \Big\|_{\mathbf{P}_{xy}}, \quad
\sqrt{\mathcal{E}_{h\sigma}^{\left(1\right)}} \ge \Big\|\left(D_x \otimes I_y\right)\mathbf {E}_z+ \mathbf{\sigma}\mathbf {H}_y\Big\|_{\mathbf{P}_{xy}}, \quad  \sqrt{\mathcal{E}_{h\sigma}^{\left(1\right)}} \ge  \Big\|\sigma\mathbf {H}_y\Big\|_{\mathbf{P}_{xy}},
\]
\[
\sqrt{\mathcal{E}_{h\sigma}^{\left(1\right)}} \ge \Big\|\left(I_x \otimes D_y\right)\mathbf {E}_z+ \mathbf{\sigma}\mathbf {H}_x\Big\|_{\mathbf{P}_{xy}}, \quad  \sqrt{\mathcal{E}_{h\sigma}^{\left(1\right)}} \ge  \Big\|\sigma\mathbf {H}_x\Big\|_{\mathbf{P}_{xy}},
\]
 lead to
\begin{equation}\label{eq:discrete_estimate_4}
\begin{split}
\frac{d}{dt}\sqrt{\mathcal{E}_{h\sigma}^{\left(1\right)}} \le \sigma_{\infty}\sqrt{\mathcal{E}_{h\sigma}^{\left(1\right)}} .
\end{split}
\end{equation}
$\hfill\square$
\bibliographystyle{elsarticle-num}

\end{document}